\makeatletter\@ifpackageloaded{underscore}{}{\usepackage[strings]{underscore}}\makeatother
\newtheorem{thm}{Theorem}[section]
\newtheorem{lem}[thm]{Lemma}
\newtheorem{cor}[thm]{Corollary}
\definecolor{axeZ}{RGB}{20,81,204}
\definecolor{axeY}{RGB}{212,43,11}
\definecolor{axeX}{rgb}{0.33, 0.42, 0.18}
\definecolor{mygreen}{rgb}{0.20  0.43 0.22} %#35703A
\definecolor{mypurple}{rgb}{0.37 0 0.51}  %#5F0082
\definecolor{axisX}{rgb}{0.281, 0.722, 0.216}
\definecolor{axisY}{rgb}{0.613, 0.125, 0.961}
\definecolor{axisZ}{rgb}{0.785, 0.527, 0.047}
\definecolor{axisT}{rgb}{0.082, 0.402, 0.539}
\newcommand\perm[2]{\ensuremath{({\color{axeZ}#1}, {\color{axeY}#2})}}
\newcommand\bsig{{\ensuremath{\boldsymbol{\sigma}}}}
\newcommand\mP{{\ensuremath{\mathcal{P}}}}
\newcommand\bpi{{\boldsymbol{\pi}}}
\DeclareMathOperator{\Sym}{Sym}
\newcommand\twoheadiagarrow{\mathrel{\rotatebox{135}{$\twoheadrightarrow$}}}
\newcommand\vinpat[2]{\ensuremath{#1|_{\color{axeX}#2}}}
\newcommand\vinpatb[3]{\ensuremath{#1|_{{\color{axeX}#2},{\color{axeY}#3}}}}
\newcommand\vinpatd[5]{\ensuremath{\vinpat{\perm{#1}{#2}}{{{\color{axeX}#3},{\color{axeY}#4},
					{\color{axeZ}#5}}}}}
\newcommand\baxpa{\ensuremath{\vinpatb{2413}{2}{2}}}
\newcommand\baxpb{\ensuremath{\vinpatd{312}{213}{1}{2}{.}}}
\newcommand\baxpc{\ensuremath{\vinpatd{3412}{1432}{2}{2}{.}}}
\newcommand\baxpd{\ensuremath{\vinpatd{2143}{1423}{2}{2}{.}}}
\newcommand\sbaxpa{\ensuremath{\Sym(\baxpa)}}
\newcommand\sbaxpb{\ensuremath{\Sym(\baxpb)}}
\newcommand\sbaxpc{\ensuremath{\Sym(\baxpc)}}
\newcommand\sbaxpd{\ensuremath{\Sym(\baxpd)}}
\newcommand\vectt[4]{\ensuremath{{\color{#1}\begin{smallmatrix}#2\\#3\\#4\end{smallmatrix}}}}
\newcommand\vecttx[3]{\vectt{axisX}{#1}{#2}{#3}}
\newcommand\vectty[3]{\vectt{axisY}{#1}{#2}{#3}}
\newcommand\vecttz[3]{\vectt{axisZ}{#1}{#2}{#3}}
\definecolor{deepblue}{rgb}{0,0,0.5}
\definecolor{deepred}{rgb}{0.6,0,0}
\definecolor{deepgreen}{rgb}{0,0.5,0}
\DeclareFixedFont{\ttb}{T1}{txtt}{bx}{n}{9} % for bold
\DeclareFixedFont{\ttm}{T1}{txtt}{m}{n}{9}  % for normal
\newcommand\pythonstyle{\lstset{
language=Python,
basicstyle=\ttm,
morekeywords={self},              % Add keywords here
keywordstyle=\ttb\color{deepblue},
emph={MyClass,__init__},          % Custom highlighting
emphstyle=\ttb\color{deepred},    % Custom highlighting style
stringstyle=\color{deepgreen},
frame=tb,                         % Any extra options here
showstringspaces=false
}}
\newcommand\pythoninline[1]{{\pythonstyle\lstinline!#1!}}
\newcommand{\minipdf}[2]{\begin{minipage}{#1\textwidth}\center{\resizebox{0.9\textwidth}{!}{\includegraphics{#2}}}\end{minipage}}
\newenvironment{myproof}[2] {\paragraph{Proof of {#1} {#2} :}}{\hfill$\square$}
\newtheorem{theorem}{Theorem}
\newtheorem{example}{Example}
\newtheorem{remark}[theorem]{Remark}
\newtheorem{definition}[theorem]{Definition}
\newcommand{\TT}{\mathbin{\rotatebox[origin=c]{90}{$\vdash$}}}
\title{Higher dimensional floorplans and Baxter $d$-permutations}
\author{Nicolas Bonichon and Thomas Muller and Adrian Tanasa}
\begin{document}
\sloppy
\maketitle

\begin{abstract}
    A $2-$dimensional mosaic floorplan is a partition of a rectangle by other rectangles with no empty rooms. These partitions (considered up to some deformations) are known to be in bijection with Baxter permutations.
    A $d$-floorplan is the generalisation of mosaic floorplans in higher dimensions, and a $d$-permutation is a $(d-1)$-tuple of permutations. Recently, in
    N. Bonichon and P.-J. Morel, {\it J. Integer Sequences} 25 (2022), Baxter $d$-permutations generalising the usual Baxter permutations were introduced.

    In this paper, we consider mosaic floorplans in arbitrary dimensions, and we construct a generating tree for $d$-floorplans, which generalises  the known generating tree structure for $2$-floorplans. The corresponding labels and rewriting rules appear to be significantly more involved in higher dimensions.
    Moreover we give a bijection between the $2^{d-1}$-floorplans and $d$-permutations characterized by forbidden vincular patterns. Surprisingly, this set of $d$-permutations is strictly contained within the set of Baxter $d$-permutations.
    
\end{abstract}

\tableofcontents

%%%%%%%%%%%%%%%%%%%%%%%%%%%%%%%%%%%%%%%%%%%%%%%%%%%%%%%%%%%%%%%%%%%%
%%%%%%%%%%%%%%%%%%%%%%%%%%%%%%%%%%%%%%%%%%%%%%%%%%%%%%%%%%%%%%%%%%%%
%%%%%%%%%%%%%%%%%%%%%%%%%%%%%%%%%%%%%%%%%%%%%%%%%%%%%%%%%%%%%%%%%%%%
%%%%%%%%%%%%%%%%%%%%%%%%%%%%%%%%%%%%%%%%%%%%%%%%%%%%%%%%%%%%%%%%%%%%

\section{Introduction}

A \emph{floorplan} of size $n$ is a partition of a rectangle using  $n$ interior-disjoint rectangles. These combinatorial objects have been studied in various fields of computer science, architecture or discrete geometry.

The boundaries of the rectangles of a floorplan define a set of horizontal and vertical straight lines. A floorplan is \emph{generic} if the union of horizontal (resp. vertical) boundary straight lines  that share the same $y$-coordinate (resp. $x$-coordinate) is a single line. A \emph{segment} is a maximal (horizontal or vertical) straight line in the inner boundaries.
A \emph{mosaic floorplan} is a generic floorplan with no segment crossing. This condition is called the \emph{tatami condition}.

In this paper we investigate a natural generalization of floorplans to higher
dimensions. A \emph{$d$-dimensional floorplan} (or a \emph{box partition}) is a partition
of a $d$-dimensional hyperrectangle with $n$ interior-disjoint $d$-dimensional
hyperrectangles (called blocks).
The boundaries of the hyperrectangles of a $d$-dimensional floorplan define a set of $(d-1)$-hyperrectangles. A \emph{$d$-dimensional floorplan} is \emph{generic} if the set of boundaries that share the same $i$-th coordinate is a single $(d-1)$-hyperrectangle. A \emph{border} is a maximal $(d-1)$-hyperrectangle of the interior of the bounding $d$-hyperrectangle.
A \emph{$d$-floorplan} is a generic $d$-dimensional floorplan that has no border crossing (this being the \emph{tatami condition} in higher dimensions).
These objects have already been considered in the literature. In the case $d=3$, they are called generic boxed Plattenbau in~\cite{felsner2020plattenbauten}.

\begin{figure}[!htb]
    \center{\minipdf{0.30}{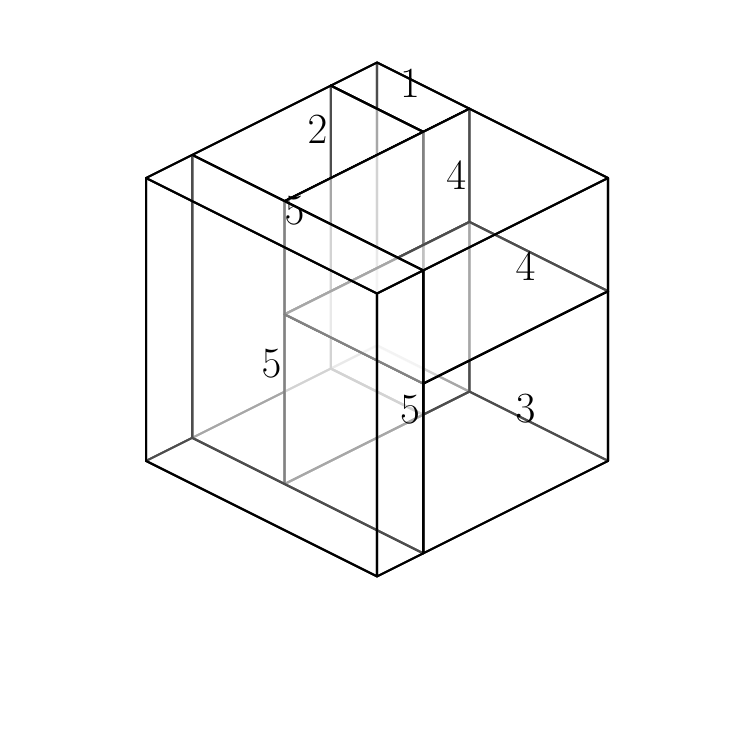}\minipdf{0.30}{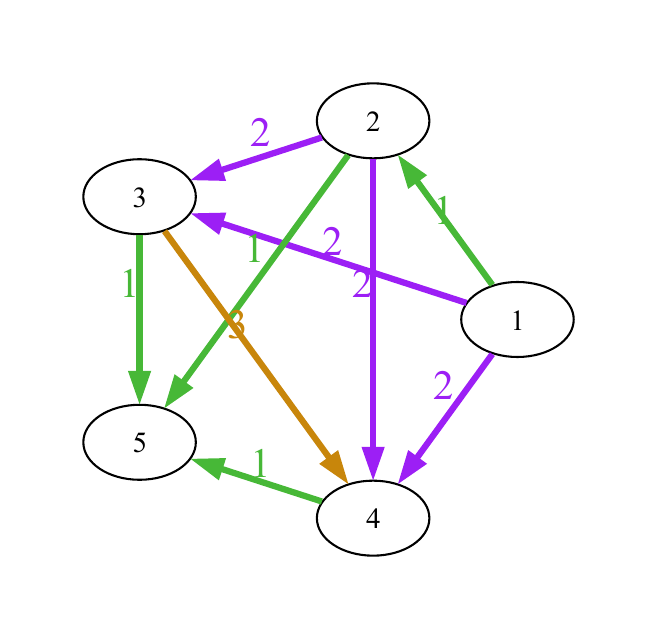}\minipdf{0.30}{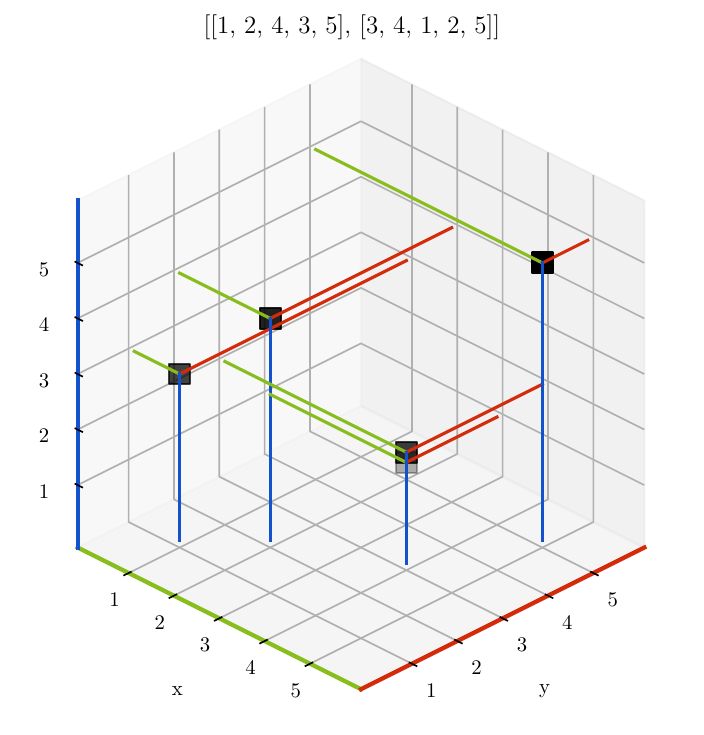}}
    \caption{On the left an example of a $3-$floorplan. In the middle the
        relative order of each blocks with respect to each direction ($x,y,z$). On
        the right the corresponding $3$-permutation (considering the $3$-floorplan
        as a $4$-floorplan).}
    \label{fig:main-example}
\end{figure}

Two $d$-floorplans are equivalent if the relative positions (up, down, left, right etc...) of their boxes are the same.
In the two dimensional case, these objects are known to be closely related to pattern-avoiding permutations such as Baxter permutations or separable permutations~\cite{ackerman2006bijection,asinowski2013orders} and other combinatorial  objects~\cite{aval2021baxter,Young2003}. The study of the generating functions and the enumeration of families of pattern-avoiding mosaic floorplans is also an active field of research, see for example ~\cite{asinowski20242}.

Additionally, in~\cite{asinowski2010separable},  a generalization to arbitrary
dimensions of a restricted family of mosaic floorplans, called {\it guillotine
        partitions}, was considered. The authors find a bijection between
$2^{d-1}-$dimensional guillotine partitions and separable $d$-permutations,
which are a higher dimensional generalisation of separable permutations.

The main contribution of this paper is twofold.
First, we exhibit a generation tree for $d$-floorplans. Given a floorplan, we can remove its \emph{top} box and unequivocally fill the resulting empty space in order to obtain a smaller floorplan (using the tatami condition). This gives a natural definition for a generating tree. In the $d=2$ case, the children of a given floorplan is determined by the number of boxes that touch the top  and left boundaries. Moreover, determining these parameters for the children is straightforward~\cite{bonichon2010baxter}. However, for $d\geq 3$, we need to manage more involved parameters. This allows us to enumerate efficiently the set of all $d$-floorplans for $n \leq 10$.

The second main contribution is a generalization of the bijection between mosaic floorplans and Baxter permutations. A \emph{$d$-permutation} (or \emph{multipermutation}) is a tuple of $d-1$ permutations of size $n$. The presented generalization of the mapping from $d$-permutation to $2^{d-1}$-floorplan is straightforward, but the characterization of the corresponding multipermutation is more involved. These multipermutations are defined by the avoidance of the vincular patterns of Baxter permutations and the dimension $3$ patterns of the separable $d$-permutations. Surprisingly, this set is strictly included in the set of Baxter $d$-permutations defined in~\cite{bonichon2022baxter}. A summary of the objects and the corresponding pattern avoiding permutations is given in Table \ref{tab:seqFlor}. Moreover, this bijection generalizes the bijection between guillotine $2^{d-1}$-floorplans and separable $d$-permutations of ~\cite{asinowski2010separable}.

\begin{table}[!htb]
    \center{\begin{tabular}{|c|c|c|}
            \hline
            Objects & Permutations& Pattern avoidance                                         \\
            \hline
            \hline

 Slicing floorplans & Separable & $\Sym(2413)$  \\ 
 \hline 

 Mosaic floorplans& Baxter  & $ \sbaxpa$ \\ 
 \hline 

 $2^{d-1}$-guillotine floorplans & $d$-Separable & $\Sym(2413), \Sym(\perm{312}{213})$  \\ 
 \hline 

 $2^{d-1}$-Floorplans  & sub $d$-Baxter & $ \sbaxpa$,  $\Sym(\perm{312}{213})$ \\ 
 \hline 

   & $d-$Baxter  & $ \sbaxpa$, $ \sbaxpb$,  \\  &  & $ \sbaxpc$,\\ & & $ \sbaxpd$  \\ 
 \hline  
  \end{tabular}}
    \label{tab:seqFlor}
    \caption{Table of the different class of floorplans and their corresponding permutation classes}
\end{table}

%%%%%%%%%%%%%%%%%%%%%%%%%%%%%%%%%%%%%%%%%%%%%%%%%%%%%%%%%%%%%%%%%%%%
%%%%%%%%%%%%%%%%%%%%%%%%%%%%%%%%%%%%%%%%%%%%%%%%%%%%%%%%%%%%%%%%%%%%
%%%%%%%%%%%%%%%%%%%%%%%%%%%%%%%%%%%%%%%%%%%%%%%%%%%%%%%%%%%%%%%%%%%%
%%%%%%%%%%%%%%%%%%%%%%%%%%%%%%%%%%%%%%%%%%%%%%%%%%%%%%%%%%%%%%%%%%%%

\section{Preliminaries and definitions}
\label{sec:prelim}

In this section, we give various definitions leading to the one of
$d$-floorplans. We then introduce a notion of equivalence and some operations
on these objects. The definitions of this
section
are natural generalisations of the $2-$dimensional case (see~\cite{ackerman2006bijection,hong2000corner,asinowski2024}).

\subsection{Generalisation of rectangles in dimension \texorpdfstring{$d$}{d}}

Let us consider a $d$-dimensional Euclidean space with coordinates
$(x_1,\ldots,x_d)$. A  \emph{hyperrectangle} is defined by the Cartesian
product of $d$ intervals of the form $[x_{i,min}, x_{i,max}]$, where $i$
denotes the $i$th coordinate. An interval $[a,b]$ is \emph{punctual} if $a=b$.
The \emph{dimension} of a hyperrectangle is the number of non-punctual
intervals. The \emph{interior} of a hyperrectangle $R$ is the Cartesian
product of the $d$ intervals of $R$ where each non-punctual interval
$[x_{i,min}, x_{i,max}]$ is replaced by the open interval $]x_{i,min},
    x_{i,max}[$. The \emph{boundary} of a hyperrectangle is the difference between
the hyperrectangle and its interior.

A \emph{box} (resp. \emph{facet}, \emph{edge}) is a hyperrectangle of
dimension $d$ (resp. $d-1$, $d-2$). We say that a facet is of axis $i$ if its
$i$-th coordinate is punctual.

We say that a point $q = (x_1,...,x_d)$ is a \emph{corner of a
    hyperrectangle}
$R= \Pi_{i=1..d}
    [x_{i,min},x_{i,max}]$ if $x_i \in \{x_{i,min},x_{i,max}\}$ for all $i$. We
denote by $q_{\min}(R)$ (resp. $q_{\max}(R)$) the corner of the hyperrectangle with the minimal (resp. maximal) coordinates, we call these corners the \emph{minimal} and the \emph{maximal} corner of $R£$. Note that $R$ is fully determined by $q_{\min}(R)$ and $q_{\max}(R)$. We denote by $\bar{q}$ the opposite corner of $q$ in $R$. By extension a \emph{corner} is a point that is a corner of at least one block.

The boundary of a box possesses $2d$ facets, $2d \times (d-1)$ edges and $2^d$
corners. More precisely, it possesses two facets and four edges of each axis
(one for each combination of maximal and minimal fixed coordinates). The
positions of the facets, the edges and the corners are given by the boundaries
of the intervals that define the boxes. Similarly, the boundary of a facet is
composed of $2\times (d-1)$ edges and $2^{d-1}$ corners. 

Given a facet $f$ of axis $j$ of a box $b$, we say that $f$ is the lower (and resp. upper) facet of axis $j$ if it contains $q_{min}(b)$ (and resp. $q_{max}(b)$).

\begin{definition}\label{def:touch}
    Let $f$ be a facet of axis $i$ and $f'$ be a facet of axis $j$.
    \begin{itemize}
        \item We say that $f$ and  $f'$ \emph{crosses} each other if the
              intersection of their interiors is non empty.
        \item We also say that \emph{$f$ properly touches $f'$} if the interior of $f$
              intersect $f'$ and they do not cross each other.
        \item We say that $f$ touch $f'$ if $f=f'$ or $f$ properly touches $f'$.
    \end{itemize}
\end{definition}

\begin{example}
    An example in dimension $3$ of two facets crossing and touching is given in Figure~\ref{fig:face-cross-not-cross}.

    \begin{figure}[!htb]
        \center{\minipdf{0.33}{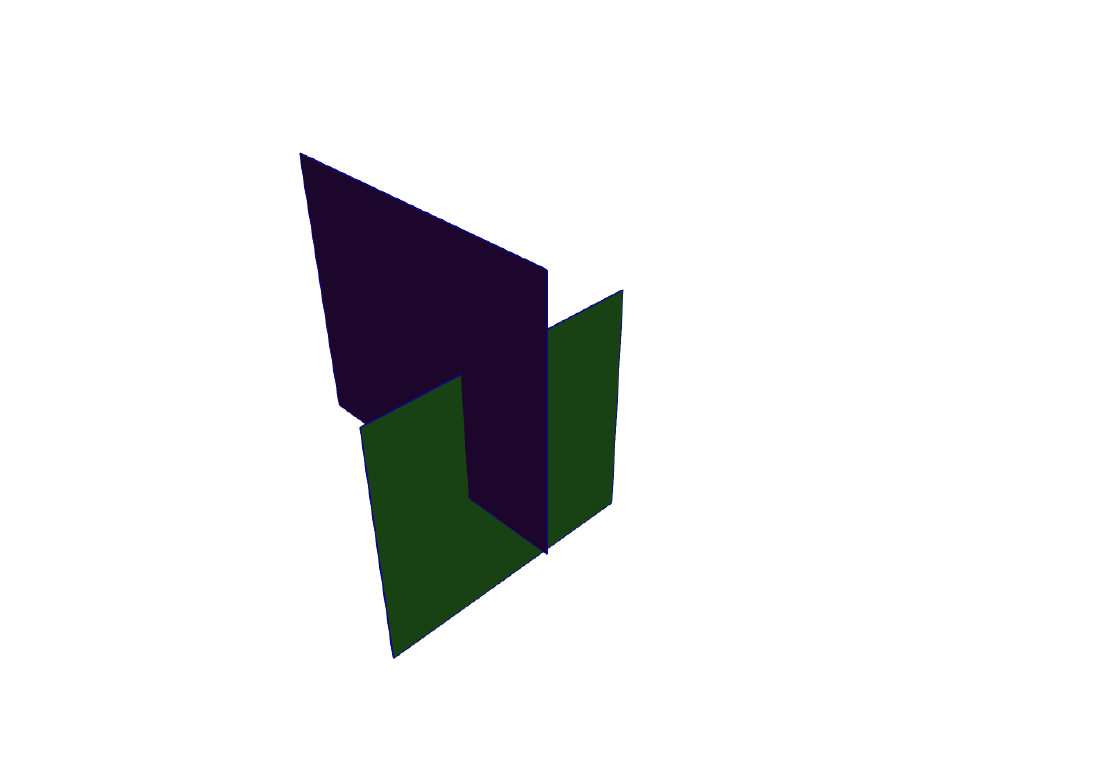}\minipdf{0.33}{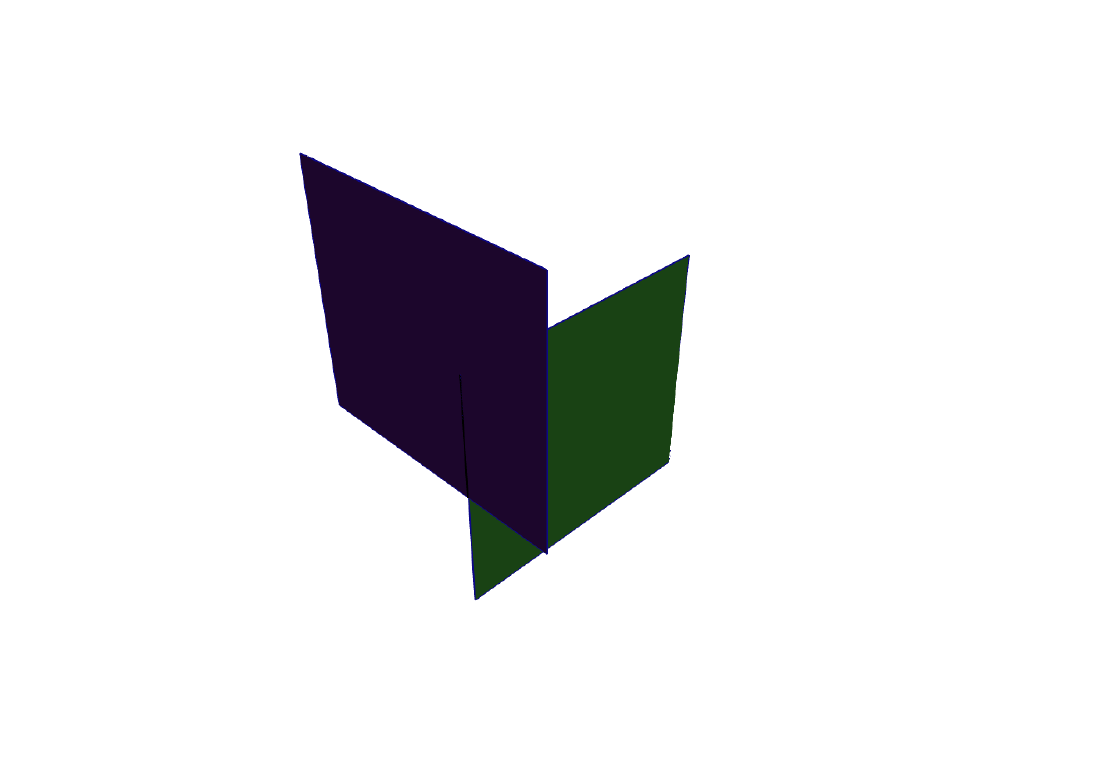} \minipdf{0.33}{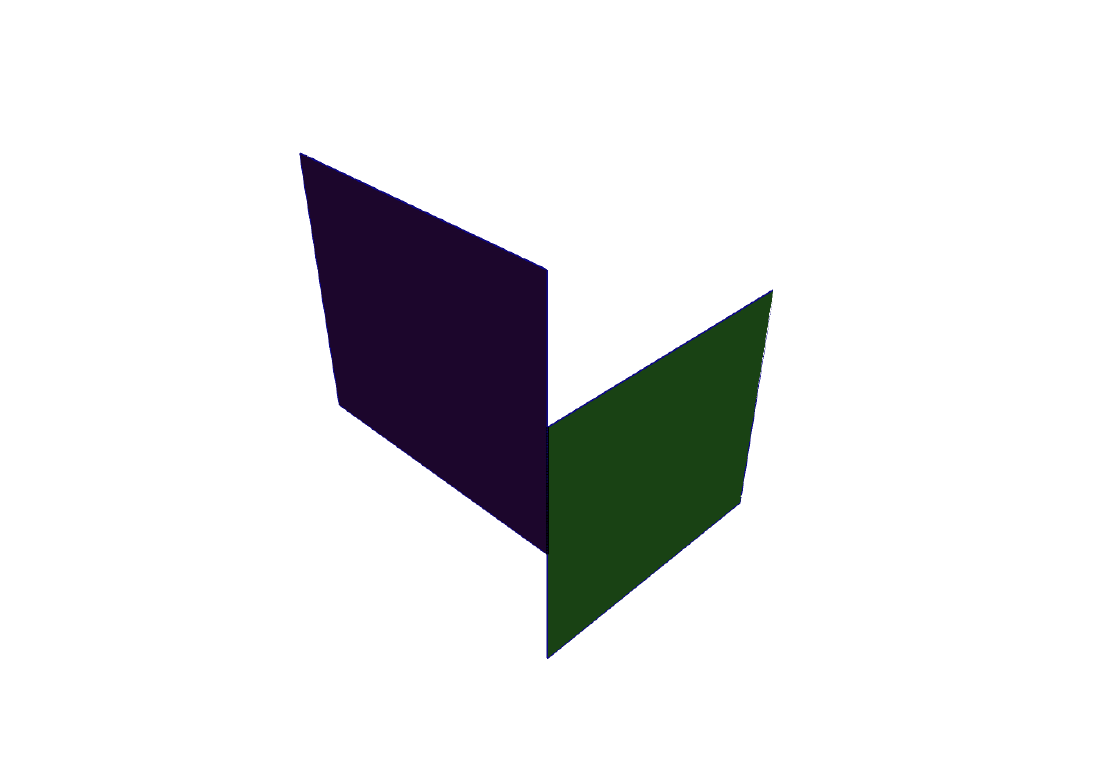}}
        \caption{On the left two facets crossing. In the middle, facet $B$ touches facet $A$. On the right, two facets touching each other.}
        \label{fig:face-cross-not-cross}
    \end{figure}
\end{example}

\medskip

Let us now give some explicit examples.
When considering the $2-$, $3-$ and resp. $4$-dimensional case, we denote the coordinates by $(x,y)$, $(x,y,z)$ and resp. $(x,y,z,t)$.
In these cases, a $3$-box corresponds to a rectangular parallelepiped, a facet of dimension $3$ of axis x to a rectangle drawn in the $yz$-plane and a $3$-edge of axis $yz$ to a segment drawn in the $x$ direction.
A $2$-box corresponds to a rectangle, a facet of dimensions two and axis $x$ to a segment drawn in the $y$ direction and a $2$-edge  to a vertex.

\subsection{Mosaic floorplans in dimension \texorpdfstring{$d$}{d}}

A $d$-dimensional floorplan $\mP$ is a box partitioned into boxes. The
interior boxes are called {\it blocks}. The facets $\mP$ is the union
of the facets of the blocks of $\mP$. A \emph{border} of axis $j$ is the
union of
all facets of axis $j$ that share the same position and that do not lie on the
bounding box of $\mP$. Given an inner facet $f$ of $\mP$, we denote by $b(f)$
the
border of $f$.

\begin{definition}
    A $d$-dimensional floorplan is \emph{generic} if all its borders are facets.
    \label{def:generic}
\end{definition}
In dimension 2, a floorplan is generic if every border is connected. In
dimension $d>2$ a border may be connected but not a facet as illustrated in
Figure~\ref{fig:non-generic}.

\begin{figure}[!htb]
    \center{\minipdf{0.45}{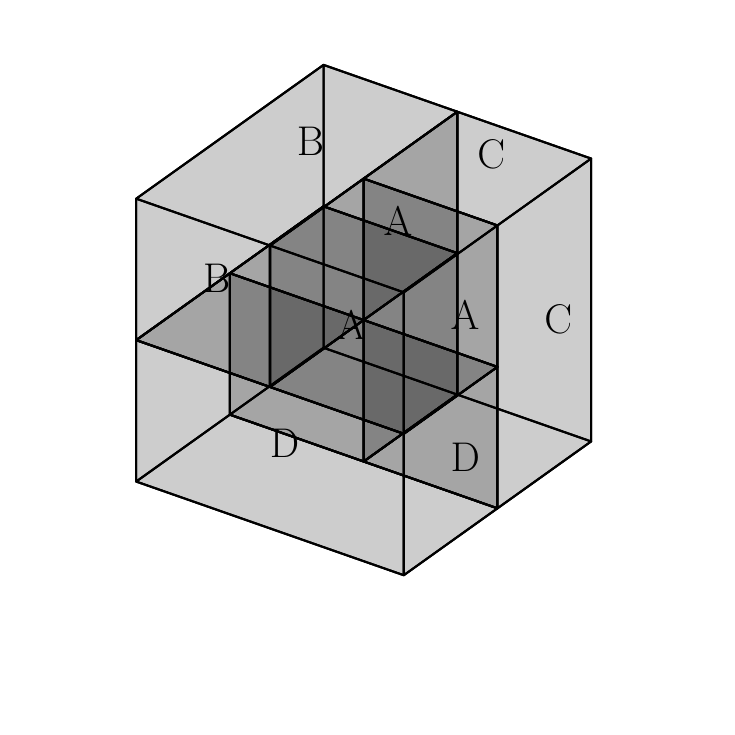}
        \minipdf{0.45}{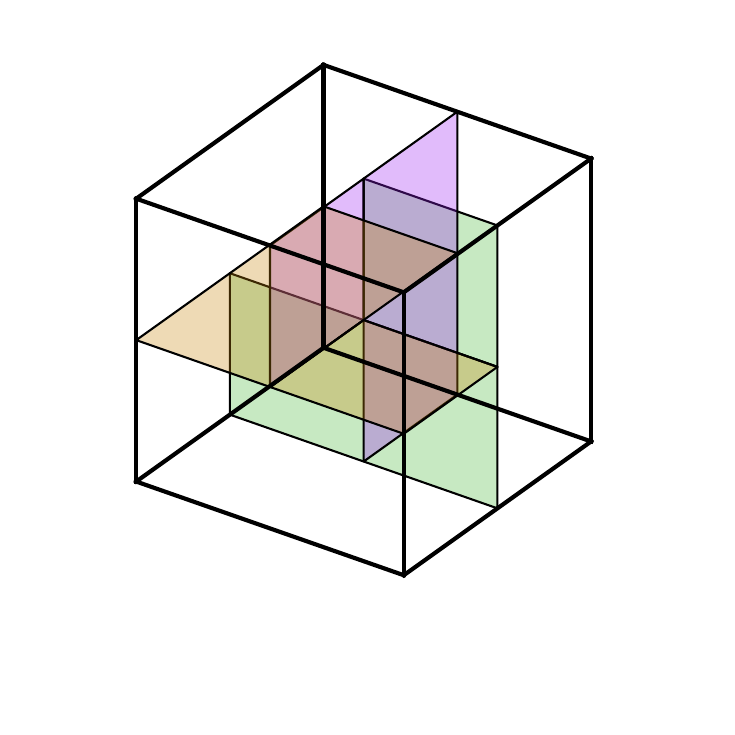}}
    \caption{A non-generic 3-floorplan. On the left the boxes. On the right the
        borders with "L" shapes.}
    \label{fig:non-generic}
\end{figure}

\begin{figure}[!htb]
    \center{\minipdf{1}{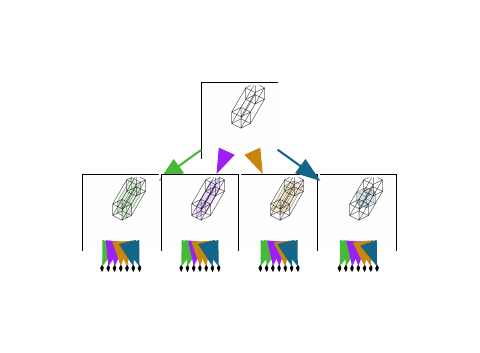}}
    \caption{The first 4-floorplans.}
    \label{fig:gen-tree-fp-4d}
\end{figure}

\begin{definition}
    A \emph{mosaic $d$-dimensional floorplan} (or a \emph{$d$-floorplan}) is a
    generic $d$-dimensional floorplan without border crossings. This last
    condition is
    referred to as the \emph{tatami} condition.
    \label{def:dfloorplan}
\end{definition}

In a $d$-floorplan, borders of different types are only allowed to touch each
other. This lead to \emph{block junctions} made of borders and with a $\TT$
shape (as in the right of Figure~\ref{fig:face-cross-not-cross}) that
generalise the one made of segments in the two dimensional case.

\begin{example}
    In dimension $2$, Definitions~\ref{def:generic} and~\ref{def:dfloorplan}  become the standard definition of a mosaic floorplan:
    \begin{itemize}
        \item A $2-$dimensional floorplan is a partition of a rectangle into interior-disjoint rectangles, called \emph{blocks}.
        \item In a $2-$dimensional floorplan, borders are segments. A $2-$dimensional floorplan is thus \emph{generic} if there are no segments that share the same fixed coordinate.
        \item A generic $2-$dimensional floorplan fulfills the tatami condition it no two segments cross each other. This condition imposes equivalently that no point can belong to the boundary of four rectangles. Examples of $2-$floorplans are given in Figure~\ref{fig:2dfloor}.

              \begin{figure}[!htb]
                  \center{\minipdf{0.25}{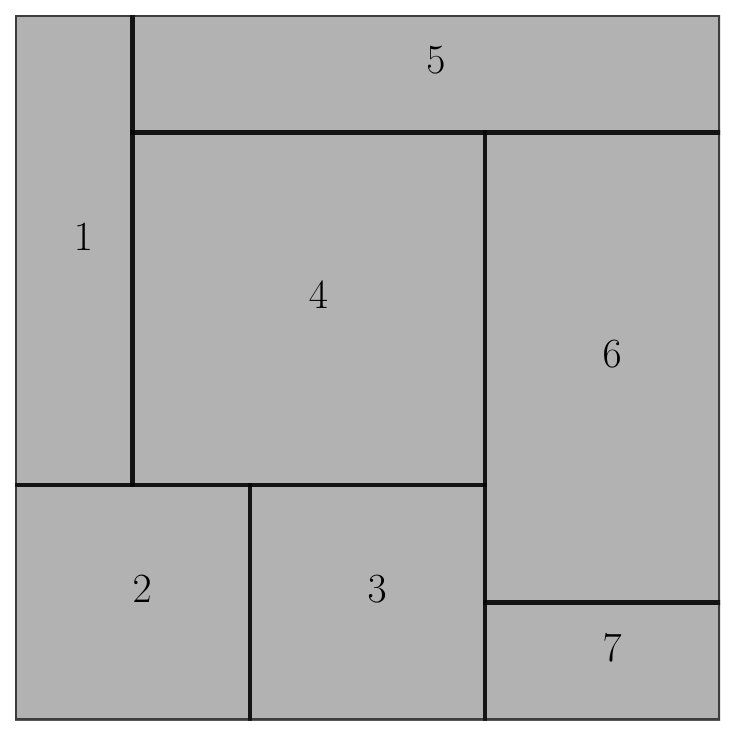}\minipdf{0.25}{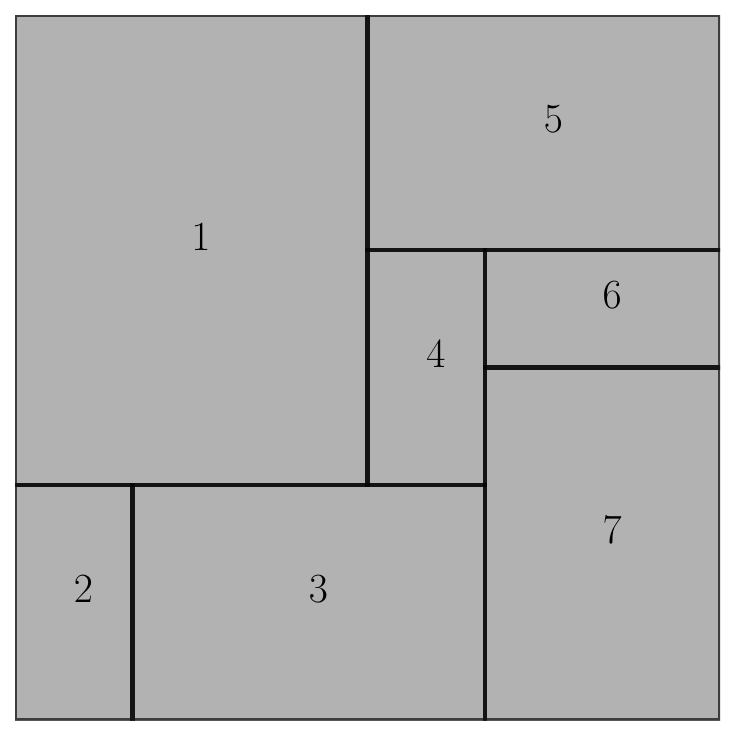}\minipdf{0.25}{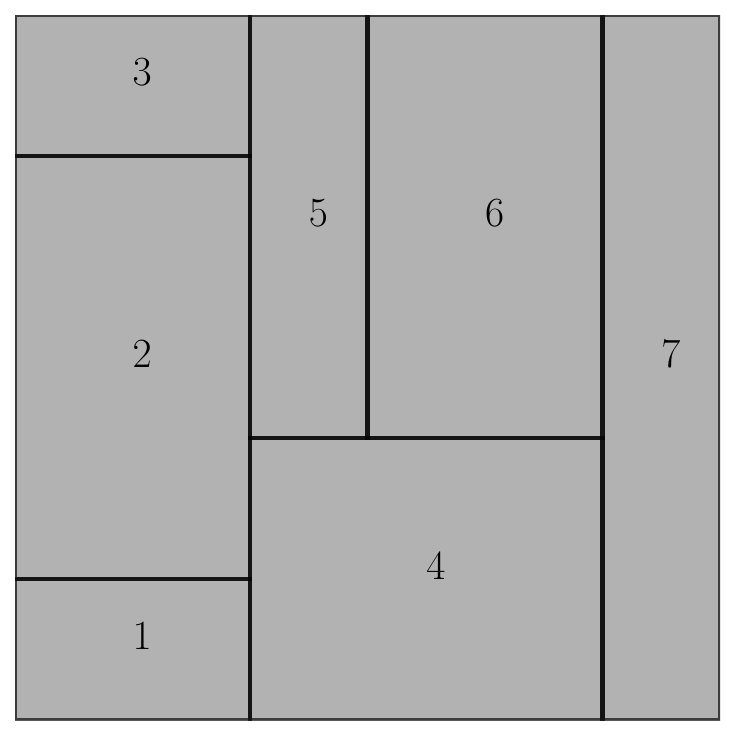}}
                  \caption{Examples of $2-$floorplans}
                  \label{fig:2dfloor}
              \end{figure}
    \end{itemize}

    In dimension $3$, the rectangles of the $2-$dimensional case are replaced by rectangular parallelepipeds. The blocks are now separated by rectangles and the tatami condition imposes that two such rectangles of different axis cannot cross each other. Some examples of $3-$floorplans are given in Figure~\ref{fig:ex_fp}.

    \begin{figure}[!htb]
        \center{\minipdf{0.325}{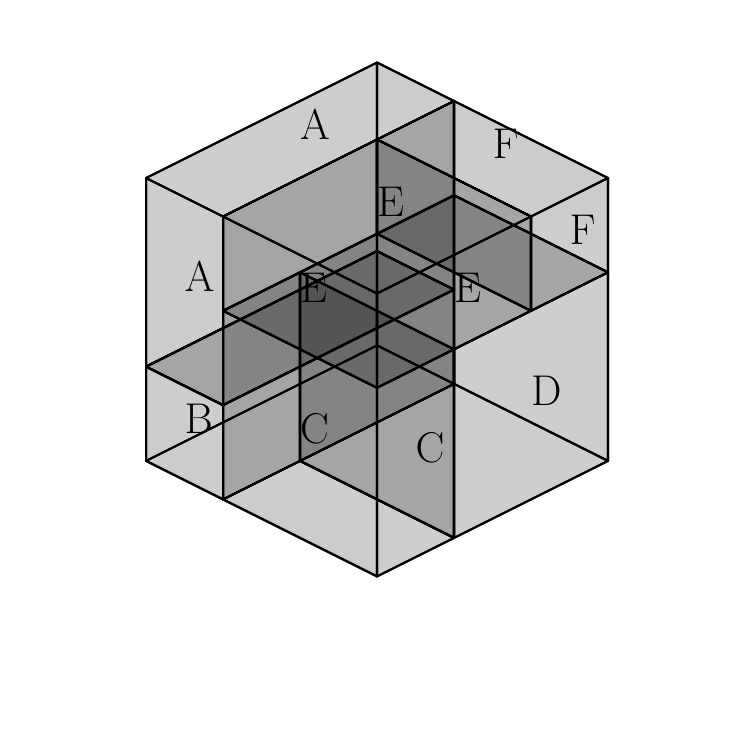}\minipdf{0.325}{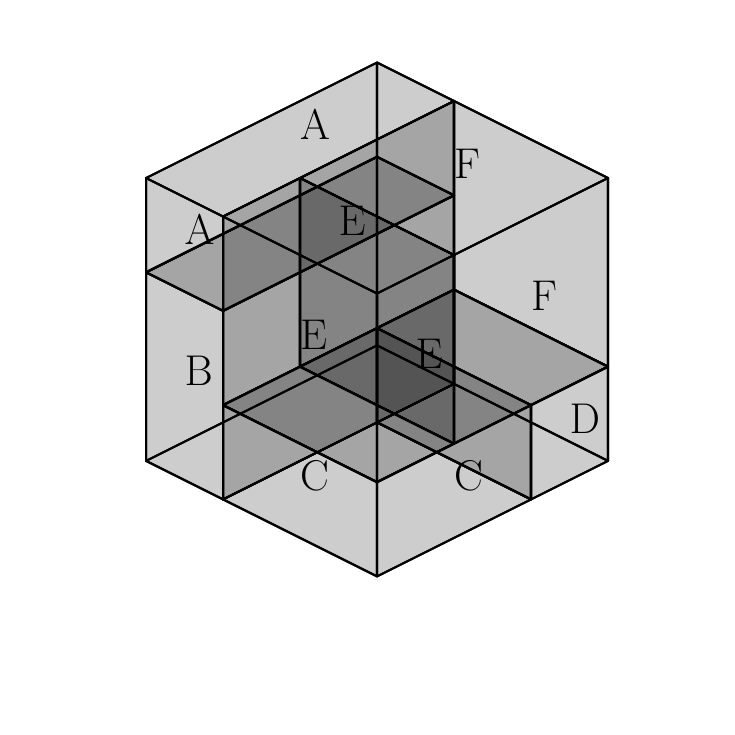}\minipdf{0.325}{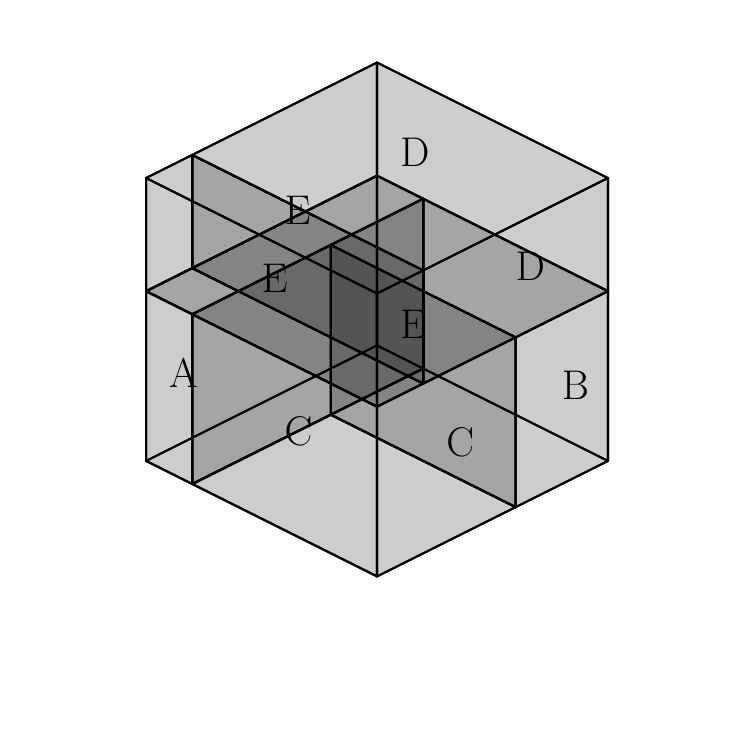}}
        \caption{Examples of $3-$floorplans}
        \label{fig:ex_fp}
    \end{figure}
\end{example}

\subsection{Equivalence relation for \texorpdfstring{$d$}{d}-floorplans}

As in~\cite{ackerman2006bijection,hong2000corner,asinowski2024}, we consider in
this paper  $d$-floorplans up to an \emph{equivalence relation}. This relation
is a straightforward generalisation of the so-called \emph{weak equivalence}.
Let us first define the direction relations between the blocks of a
$d$-floorplan; there are $d$ possible such relations, one per axis.

\begin{definition}
    Let $\mP$ be a $d$-floorplan and let $B_1$ and $B_2$ be two blocks in $\mP$.
    The neighborhood relation  \emph{$B_1$ is a left neighbor of $B_2$ in the $j$ direction} describes the following situation:
    \begin{itemize}
        \item $x_{j,max}(B_1)= x_{j,min}(B_2)= x_j$ and there is a border of
              axis $j$ in $\mP$ that contains the facet of axis $j$ and position $x_j$
              of the two blocks.
    \end{itemize}
    We define the direction relation \emph{$B_1$ precedes $B_2$ in the $j$ direction}
    (denoted $B_1 \overset{j}{\twoheadleftarrow} B_2$ with $j=1, \ldots, d$) as the transitive and reflexive closure of this neighborhood relation.
    
    We also define similarly the opposite relation, "\emph{$B_2$ follows $B_1$}" in the $j$ direction, denoted by $B_2 \overset{j}{\twoheadrightarrow} B_1$.
    \label{def:dirrel}
\end{definition}

\begin{example}
    In dimension $2$, there are two types of neighborhood relations, these are shown in Figure~\ref{fig:rect-adj}.
    \begin{figure}[!htb]
        \center{\begin{minipage}{0.15\textwidth}\center{\resizebox{0.9\textwidth}{!}{%% Creator: Matplotlib, PGF backend
%%
%% To include the figure in your LaTeX document, write
%%   \input{<filename>.pgf}
%%
%% Make sure the required packages are loaded in your preamble
%%   \usepackage{pgf}
%%
%% Also ensure that all the required font packages are loaded; for instance,
%% the lmodern package is sometimes necessary when using math font.
%%   \usepackage{lmodern}
%%
%% Figures using additional raster images can only be included by \input if
%% they are in the same directory as the main LaTeX file. For loading figures
%% from other directories you can use the `import` package
%%   \usepackage{import}
%%
%% and then include the figures with
%%   \import{<path to file>}{<filename>.pgf}
%%
%% Matplotlib used the following preamble
%%   \def\mathdefault#1{#1}
%%   \everymath=\expandafter{\the\everymath\displaystyle}
%%   
%%   \makeatletter\@ifpackageloaded{underscore}{}{\usepackage[strings]{underscore}}\makeatother
%%
\begingroup%
\makeatletter%
\begin{pgfpicture}%
\pgfpathrectangle{\pgfpointorigin}{\pgfqpoint{4.900000in}{4.900000in}}%
\pgfusepath{use as bounding box, clip}%
\begin{pgfscope}%
\pgfsetbuttcap%
\pgfsetmiterjoin%
\definecolor{currentfill}{rgb}{1.000000,1.000000,1.000000}%
\pgfsetfillcolor{currentfill}%
\pgfsetlinewidth{0.000000pt}%
\definecolor{currentstroke}{rgb}{1.000000,1.000000,1.000000}%
\pgfsetstrokecolor{currentstroke}%
\pgfsetdash{}{0pt}%
\pgfpathmoveto{\pgfqpoint{0.000000in}{0.000000in}}%
\pgfpathlineto{\pgfqpoint{4.900000in}{0.000000in}}%
\pgfpathlineto{\pgfqpoint{4.900000in}{4.900000in}}%
\pgfpathlineto{\pgfqpoint{0.000000in}{4.900000in}}%
\pgfpathlineto{\pgfqpoint{0.000000in}{0.000000in}}%
\pgfpathclose%
\pgfusepath{fill}%
\end{pgfscope}%
\begin{pgfscope}%
\pgfpathrectangle{\pgfqpoint{0.100000in}{0.100000in}}{\pgfqpoint{4.700000in}{4.700000in}}%
\pgfusepath{clip}%
\pgfsetbuttcap%
\pgfsetmiterjoin%
\definecolor{currentfill}{rgb}{0.000000,0.000000,0.000000}%
\pgfsetfillcolor{currentfill}%
\pgfsetfillopacity{0.300000}%
\pgfsetlinewidth{2.007500pt}%
\definecolor{currentstroke}{rgb}{0.000000,0.000000,0.000000}%
\pgfsetstrokecolor{currentstroke}%
\pgfsetstrokeopacity{0.300000}%
\pgfsetdash{}{0pt}%
\pgfpathmoveto{\pgfqpoint{0.100000in}{0.100000in}}%
\pgfpathlineto{\pgfqpoint{2.450000in}{0.100000in}}%
\pgfpathlineto{\pgfqpoint{2.450000in}{1.666667in}}%
\pgfpathlineto{\pgfqpoint{0.100000in}{1.666667in}}%
\pgfpathlineto{\pgfqpoint{0.100000in}{0.100000in}}%
\pgfpathclose%
\pgfusepath{stroke,fill}%
\end{pgfscope}%
\begin{pgfscope}%
\pgfpathrectangle{\pgfqpoint{0.100000in}{0.100000in}}{\pgfqpoint{4.700000in}{4.700000in}}%
\pgfusepath{clip}%
\pgfsetbuttcap%
\pgfsetmiterjoin%
\pgfsetlinewidth{2.007500pt}%
\definecolor{currentstroke}{rgb}{0.000000,0.000000,0.000000}%
\pgfsetstrokecolor{currentstroke}%
\pgfsetstrokeopacity{0.500000}%
\pgfsetdash{}{0pt}%
\pgfpathmoveto{\pgfqpoint{0.100000in}{0.100000in}}%
\pgfpathlineto{\pgfqpoint{2.450000in}{0.100000in}}%
\pgfpathlineto{\pgfqpoint{2.450000in}{1.666667in}}%
\pgfpathlineto{\pgfqpoint{0.100000in}{1.666667in}}%
\pgfpathlineto{\pgfqpoint{0.100000in}{0.100000in}}%
\pgfpathclose%
\pgfusepath{stroke}%
\end{pgfscope}%
\begin{pgfscope}%
\pgfpathrectangle{\pgfqpoint{0.100000in}{0.100000in}}{\pgfqpoint{4.700000in}{4.700000in}}%
\pgfusepath{clip}%
\pgfsetbuttcap%
\pgfsetmiterjoin%
\definecolor{currentfill}{rgb}{0.000000,0.000000,0.000000}%
\pgfsetfillcolor{currentfill}%
\pgfsetfillopacity{0.300000}%
\pgfsetlinewidth{2.007500pt}%
\definecolor{currentstroke}{rgb}{0.000000,0.000000,0.000000}%
\pgfsetstrokecolor{currentstroke}%
\pgfsetstrokeopacity{0.300000}%
\pgfsetdash{}{0pt}%
\pgfpathmoveto{\pgfqpoint{2.450000in}{3.233333in}}%
\pgfpathlineto{\pgfqpoint{4.800000in}{3.233333in}}%
\pgfpathlineto{\pgfqpoint{4.800000in}{4.800000in}}%
\pgfpathlineto{\pgfqpoint{2.450000in}{4.800000in}}%
\pgfpathlineto{\pgfqpoint{2.450000in}{3.233333in}}%
\pgfpathclose%
\pgfusepath{stroke,fill}%
\end{pgfscope}%
\begin{pgfscope}%
\pgfpathrectangle{\pgfqpoint{0.100000in}{0.100000in}}{\pgfqpoint{4.700000in}{4.700000in}}%
\pgfusepath{clip}%
\pgfsetbuttcap%
\pgfsetmiterjoin%
\pgfsetlinewidth{2.007500pt}%
\definecolor{currentstroke}{rgb}{0.000000,0.000000,0.000000}%
\pgfsetstrokecolor{currentstroke}%
\pgfsetstrokeopacity{0.500000}%
\pgfsetdash{}{0pt}%
\pgfpathmoveto{\pgfqpoint{2.450000in}{3.233333in}}%
\pgfpathlineto{\pgfqpoint{4.800000in}{3.233333in}}%
\pgfpathlineto{\pgfqpoint{4.800000in}{4.800000in}}%
\pgfpathlineto{\pgfqpoint{2.450000in}{4.800000in}}%
\pgfpathlineto{\pgfqpoint{2.450000in}{3.233333in}}%
\pgfpathclose%
\pgfusepath{stroke}%
\end{pgfscope}%
\begin{pgfscope}%
\pgfpathrectangle{\pgfqpoint{0.100000in}{0.100000in}}{\pgfqpoint{4.700000in}{4.700000in}}%
\pgfusepath{clip}%
\pgfsetrectcap%
\pgfsetroundjoin%
\pgfsetlinewidth{3.011250pt}%
\definecolor{currentstroke}{rgb}{0.281250,0.721569,0.215686}%
\pgfsetstrokecolor{currentstroke}%
\pgfsetdash{}{0pt}%
\pgfpathmoveto{\pgfqpoint{2.450000in}{0.100000in}}%
\pgfpathlineto{\pgfqpoint{2.450000in}{4.800000in}}%
\pgfusepath{stroke}%
\end{pgfscope}%
\begin{pgfscope}%
\definecolor{textcolor}{rgb}{0.000000,0.000000,0.000000}%
\pgfsetstrokecolor{textcolor}%
\pgfsetfillcolor{textcolor}%
\pgftext[x=1.275000in,y=0.883333in,left,base]{\color{textcolor}{\rmfamily\fontsize{20.000000}{24.000000}\selectfont\catcode`\^=\active\def^{\ifmmode\sp\else\^{}\fi}\catcode`\%=\active\def%{\%}A}}%
\end{pgfscope}%
\begin{pgfscope}%
\definecolor{textcolor}{rgb}{0.000000,0.000000,0.000000}%
\pgfsetstrokecolor{textcolor}%
\pgfsetfillcolor{textcolor}%
\pgftext[x=3.625000in,y=4.016667in,left,base]{\color{textcolor}{\rmfamily\fontsize{20.000000}{24.000000}\selectfont\catcode`\^=\active\def^{\ifmmode\sp\else\^{}\fi}\catcode`\%=\active\def%{\%}B}}%
\end{pgfscope}%
\end{pgfpicture}%
\makeatother%
\endgroup%}}\end{minipage}\begin{minipage}{0.15\textwidth}\center{\resizebox{0.9\textwidth}{!}{%% Creator: Matplotlib, PGF backend
%%
%% To include the figure in your LaTeX document, write
%%   \input{<filename>.pgf}
%%
%% Make sure the required packages are loaded in your preamble
%%   \usepackage{pgf}
%%
%% Also ensure that all the required font packages are loaded; for instance,
%% the lmodern package is sometimes necessary when using math font.
%%   \usepackage{lmodern}
%%
%% Figures using additional raster images can only be included by \input if
%% they are in the same directory as the main LaTeX file. For loading figures
%% from other directories you can use the `import` package
%%   \usepackage{import}
%%
%% and then include the figures with
%%   \import{<path to file>}{<filename>.pgf}
%%
%% Matplotlib used the following preamble
%%   \def\mathdefault#1{#1}
%%   \everymath=\expandafter{\the\everymath\displaystyle}
%%   
%%   \makeatletter\@ifpackageloaded{underscore}{}{\usepackage[strings]{underscore}}\makeatother
%%
\begingroup%
\makeatletter%
\begin{pgfpicture}%
\pgfpathrectangle{\pgfpointorigin}{\pgfqpoint{4.900000in}{4.900000in}}%
\pgfusepath{use as bounding box, clip}%
\begin{pgfscope}%
\pgfsetbuttcap%
\pgfsetmiterjoin%
\definecolor{currentfill}{rgb}{1.000000,1.000000,1.000000}%
\pgfsetfillcolor{currentfill}%
\pgfsetlinewidth{0.000000pt}%
\definecolor{currentstroke}{rgb}{1.000000,1.000000,1.000000}%
\pgfsetstrokecolor{currentstroke}%
\pgfsetdash{}{0pt}%
\pgfpathmoveto{\pgfqpoint{0.000000in}{0.000000in}}%
\pgfpathlineto{\pgfqpoint{4.900000in}{0.000000in}}%
\pgfpathlineto{\pgfqpoint{4.900000in}{4.900000in}}%
\pgfpathlineto{\pgfqpoint{0.000000in}{4.900000in}}%
\pgfpathlineto{\pgfqpoint{0.000000in}{0.000000in}}%
\pgfpathclose%
\pgfusepath{fill}%
\end{pgfscope}%
\begin{pgfscope}%
\pgfpathrectangle{\pgfqpoint{0.100000in}{0.100000in}}{\pgfqpoint{4.700000in}{4.700000in}}%
\pgfusepath{clip}%
\pgfsetbuttcap%
\pgfsetmiterjoin%
\definecolor{currentfill}{rgb}{0.000000,0.000000,0.000000}%
\pgfsetfillcolor{currentfill}%
\pgfsetfillopacity{0.300000}%
\pgfsetlinewidth{2.007500pt}%
\definecolor{currentstroke}{rgb}{0.000000,0.000000,0.000000}%
\pgfsetstrokecolor{currentstroke}%
\pgfsetstrokeopacity{0.300000}%
\pgfsetdash{}{0pt}%
\pgfpathmoveto{\pgfqpoint{0.100000in}{0.100000in}}%
\pgfpathlineto{\pgfqpoint{1.666667in}{0.100000in}}%
\pgfpathlineto{\pgfqpoint{1.666667in}{2.450000in}}%
\pgfpathlineto{\pgfqpoint{0.100000in}{2.450000in}}%
\pgfpathlineto{\pgfqpoint{0.100000in}{0.100000in}}%
\pgfpathclose%
\pgfusepath{stroke,fill}%
\end{pgfscope}%
\begin{pgfscope}%
\pgfpathrectangle{\pgfqpoint{0.100000in}{0.100000in}}{\pgfqpoint{4.700000in}{4.700000in}}%
\pgfusepath{clip}%
\pgfsetbuttcap%
\pgfsetmiterjoin%
\pgfsetlinewidth{2.007500pt}%
\definecolor{currentstroke}{rgb}{0.000000,0.000000,0.000000}%
\pgfsetstrokecolor{currentstroke}%
\pgfsetstrokeopacity{0.500000}%
\pgfsetdash{}{0pt}%
\pgfpathmoveto{\pgfqpoint{0.100000in}{0.100000in}}%
\pgfpathlineto{\pgfqpoint{1.666667in}{0.100000in}}%
\pgfpathlineto{\pgfqpoint{1.666667in}{2.450000in}}%
\pgfpathlineto{\pgfqpoint{0.100000in}{2.450000in}}%
\pgfpathlineto{\pgfqpoint{0.100000in}{0.100000in}}%
\pgfpathclose%
\pgfusepath{stroke}%
\end{pgfscope}%
\begin{pgfscope}%
\pgfpathrectangle{\pgfqpoint{0.100000in}{0.100000in}}{\pgfqpoint{4.700000in}{4.700000in}}%
\pgfusepath{clip}%
\pgfsetbuttcap%
\pgfsetmiterjoin%
\definecolor{currentfill}{rgb}{0.000000,0.000000,0.000000}%
\pgfsetfillcolor{currentfill}%
\pgfsetfillopacity{0.300000}%
\pgfsetlinewidth{2.007500pt}%
\definecolor{currentstroke}{rgb}{0.000000,0.000000,0.000000}%
\pgfsetstrokecolor{currentstroke}%
\pgfsetstrokeopacity{0.300000}%
\pgfsetdash{}{0pt}%
\pgfpathmoveto{\pgfqpoint{3.233333in}{2.450000in}}%
\pgfpathlineto{\pgfqpoint{4.800000in}{2.450000in}}%
\pgfpathlineto{\pgfqpoint{4.800000in}{4.800000in}}%
\pgfpathlineto{\pgfqpoint{3.233333in}{4.800000in}}%
\pgfpathlineto{\pgfqpoint{3.233333in}{2.450000in}}%
\pgfpathclose%
\pgfusepath{stroke,fill}%
\end{pgfscope}%
\begin{pgfscope}%
\pgfpathrectangle{\pgfqpoint{0.100000in}{0.100000in}}{\pgfqpoint{4.700000in}{4.700000in}}%
\pgfusepath{clip}%
\pgfsetbuttcap%
\pgfsetmiterjoin%
\pgfsetlinewidth{2.007500pt}%
\definecolor{currentstroke}{rgb}{0.000000,0.000000,0.000000}%
\pgfsetstrokecolor{currentstroke}%
\pgfsetstrokeopacity{0.500000}%
\pgfsetdash{}{0pt}%
\pgfpathmoveto{\pgfqpoint{3.233333in}{2.450000in}}%
\pgfpathlineto{\pgfqpoint{4.800000in}{2.450000in}}%
\pgfpathlineto{\pgfqpoint{4.800000in}{4.800000in}}%
\pgfpathlineto{\pgfqpoint{3.233333in}{4.800000in}}%
\pgfpathlineto{\pgfqpoint{3.233333in}{2.450000in}}%
\pgfpathclose%
\pgfusepath{stroke}%
\end{pgfscope}%
\begin{pgfscope}%
\pgfpathrectangle{\pgfqpoint{0.100000in}{0.100000in}}{\pgfqpoint{4.700000in}{4.700000in}}%
\pgfusepath{clip}%
\pgfsetrectcap%
\pgfsetroundjoin%
\pgfsetlinewidth{3.011250pt}%
\definecolor{currentstroke}{rgb}{0.613281,0.125000,0.960784}%
\pgfsetstrokecolor{currentstroke}%
\pgfsetdash{}{0pt}%
\pgfpathmoveto{\pgfqpoint{0.100000in}{2.450000in}}%
\pgfpathlineto{\pgfqpoint{4.800000in}{2.450000in}}%
\pgfusepath{stroke}%
\end{pgfscope}%
\begin{pgfscope}%
\definecolor{textcolor}{rgb}{0.000000,0.000000,0.000000}%
\pgfsetstrokecolor{textcolor}%
\pgfsetfillcolor{textcolor}%
\pgftext[x=0.883333in,y=1.275000in,left,base]{\color{textcolor}{\rmfamily\fontsize{20.000000}{24.000000}\selectfont\catcode`\^=\active\def^{\ifmmode\sp\else\^{}\fi}\catcode`\%=\active\def%{\%}A}}%
\end{pgfscope}%
\begin{pgfscope}%
\definecolor{textcolor}{rgb}{0.000000,0.000000,0.000000}%
\pgfsetstrokecolor{textcolor}%
\pgfsetfillcolor{textcolor}%
\pgftext[x=4.016667in,y=3.625000in,left,base]{\color{textcolor}{\rmfamily\fontsize{20.000000}{24.000000}\selectfont\catcode`\^=\active\def^{\ifmmode\sp\else\^{}\fi}\catcode`\%=\active\def%{\%}B}}%
\end{pgfscope}%
\end{pgfpicture}%
\makeatother%
\endgroup%}}\end{minipage}}
        \caption{Two neighbor rectangles with respect to the $x$ and $y$ axis.}
        \label{fig:rect-adj}
    \end{figure}
    Figure~\ref{fig:adj-3D} shows the $3$ types of neighborhood relations of the blocks of $3-$floorplans.
    \begin{figure}[!htb]
        \center{\minipdf{0.32}{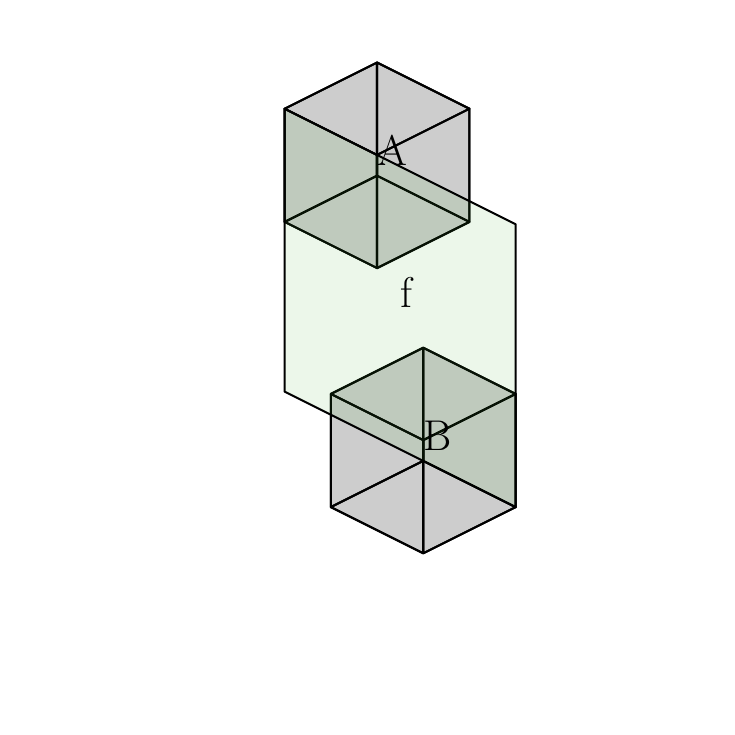}\minipdf{0.32}{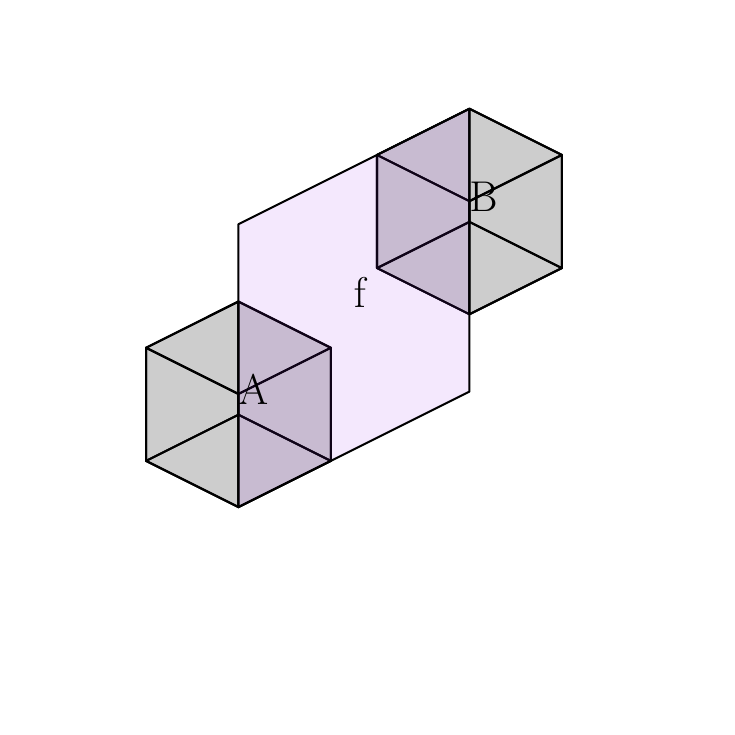} \minipdf{0.32}{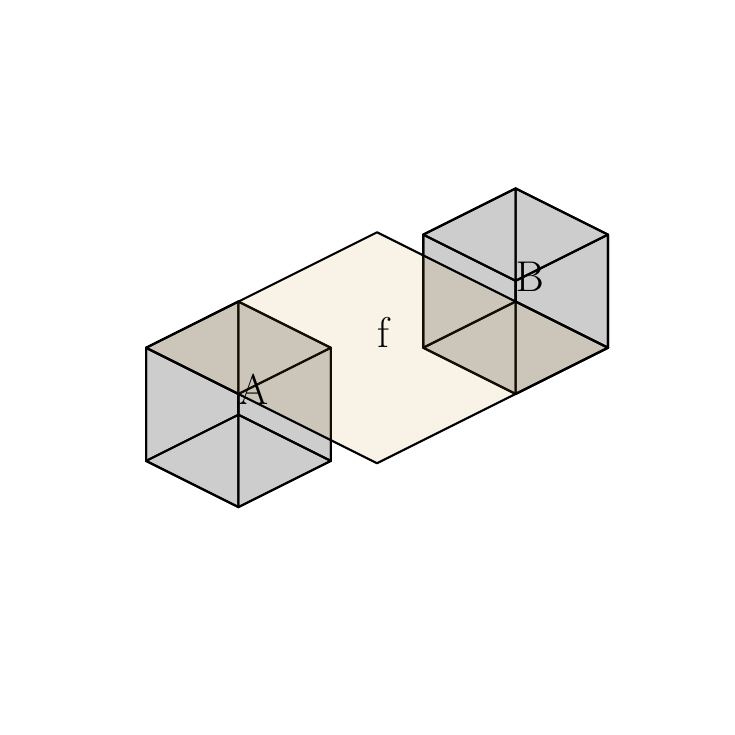}}
        \caption{The relations $A$ and $B$ are $x-$neighbors, resp. $y-$neighbors, and resp. $z-$neighbors for the blocks of a $3-$floorplan.}
        \label{fig:adj-3D}
    \end{figure}
\end{example}

\begin{remark}
    The relations $\overset{j}{\twoheadleftarrow}$ and $\overset{j}{\twoheadrightarrow}$  are partial orders for all $j$.
\end{remark}

\begin{definition}
    Two $d$-floorplans are considered \emph{weakly equivalent} if there exists
    a labeling of their blocks such that the direction relations are identical
    in both objects.
    \label{def:weakd}
\end{definition}

{\bf From now on, when we refer to a $d-$floorplans, we refer to its whole equivalence class}.
This equivalence relation can be seen from two aspects. On the one hand, one
can look at the adjacency graph of the blocks of a $d$-floorplan as in Figure
\ref{fig:adj-graph-3D}. Two equivalent $d$-floorplans are then identified if
their adjacency graphs are isomorph. On the other hand, one can consider local
modifications of the blocks. Two equivalent $d$-floorplans can then be
identified if one can be obtained from the other by a sequence of block
junctions's shifts (that are made of borders). Loosely speaking, considering
the equivalence given in Definition~\ref{def:weakd} is equivalent to say that
in a $d$-floorplan, we are only interested in the relative positions of the
blocks with respect to each other and to segments.

Notice also that, knowing all direction relations
$\overset{j}{\twoheadleftarrow}$ of the blocks of an equivalence class of
$d$-floorplans is sufficient to be able to reconstruct one of its
representatives. Hence, equivalence classes of $d$-floorplans can be
represented by sets of $d$ partial orders on the same ground set. However, not
every set of $d$ partial orders represents an equivalence class of
$d$-floorplans.

\begin{example}
    In Figure~\ref{fig:2dfloor}, the first two $2-$floorplans are equivalent and the third one is not. Additionally, Figure~\ref{fig:2d-orders} shows the adjacency graphs of the block direction relations for these two equivalent $2-$floorplans.

    Similarly, the first two $3-$floorplans in Figure~\ref{fig:ex_fp} are equivalent. Figure~\ref{fig:adj-graph-3D} shows the adjacency graphs of the block direction relations of these equivalent $3-$floorplans.

    \begin{figure}[!htb]
        \center{\minipdf{0.35}{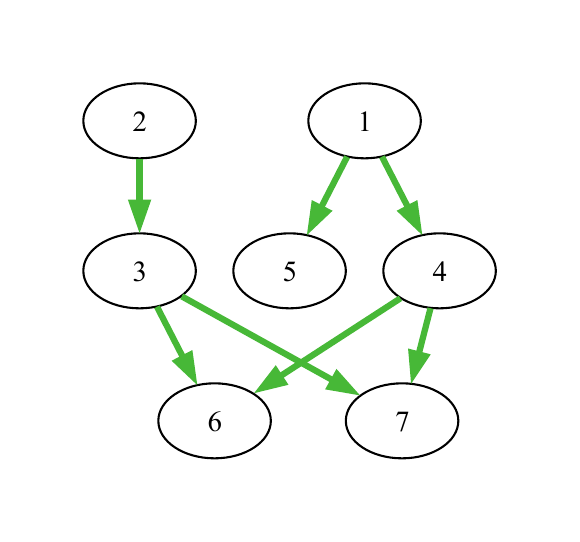} \minipdf{0.35}{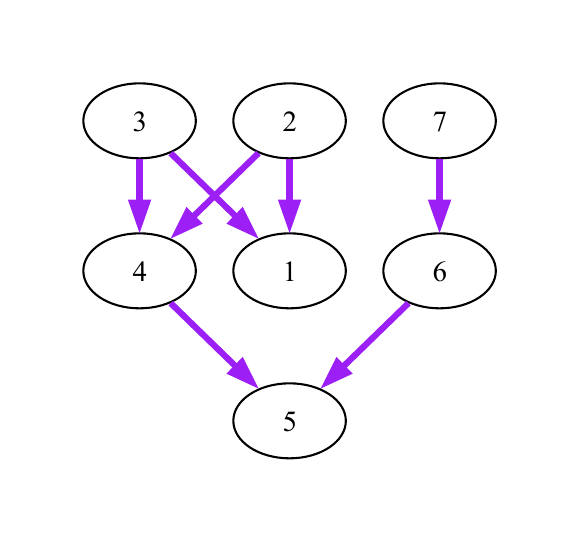}}
        \caption{Horizontal and vertical order relations between blocks of the $2$-floorplans on the left of Figure~\ref{fig:2dfloor}.}
        \label{fig:2d-orders}
    \end{figure}

    \begin{figure}[!htb]
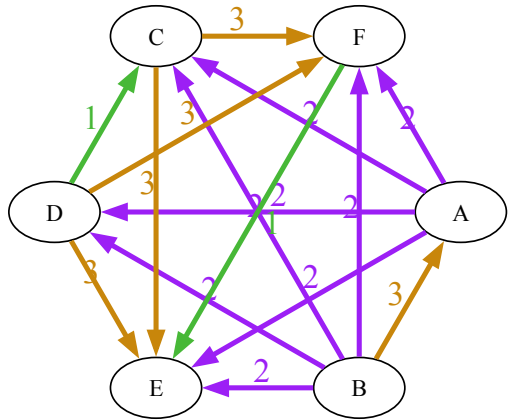

        \center{\minipdf{0.6}{figs/FP-ex-3d1-order-all.pdf}}
        \caption{Adjacency graphs of the $3-$floorplans on the left of Figure~\ref{fig:ex_fp}.}
        \label{fig:adj-graph-3D}
    \end{figure}

\end{example}

\subsection{Block deletion \texorpdfstring{$d$}{d}-floorplans}
\label{sec:peeld}

In the context of $2$-floorplan~\cite{ackerman2006bijection,hong2000corner} a block deletion operation was introduced. It consists of removing the block incident to a specific corner of the bounding box and then filling the resulting empty space by shifting one facet of the deleted block. In this section, we generalize this operation in higher dimensions.

Given a corner $q$ of a $d$-floorplan $\mP$. Let $B$ be the block incident to $q$ and let $\bar{q}$ be its opposite corner in $B$. A \emph{shifting facet} is a facet of $B$ incident to $\bar{q}$ such that $b(f)=f$.

A \emph{block deletion} operation with respect to $q$ consists of removing the block $B$ by shifting the facet $f$ until it reaches $q$.

\begin{figure}[!htb]
    \center{	\minipdf{0.32}{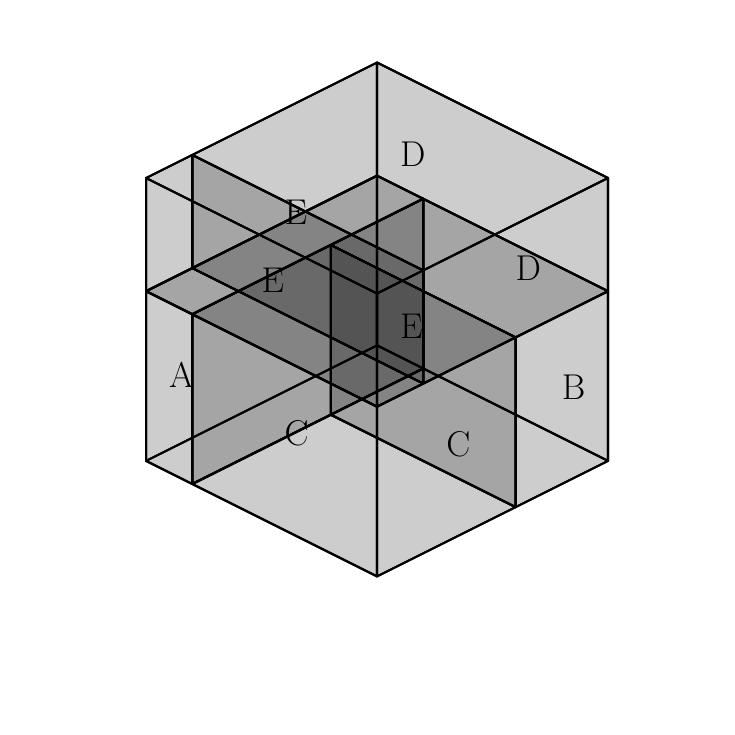}
        \minipdf{0.32}{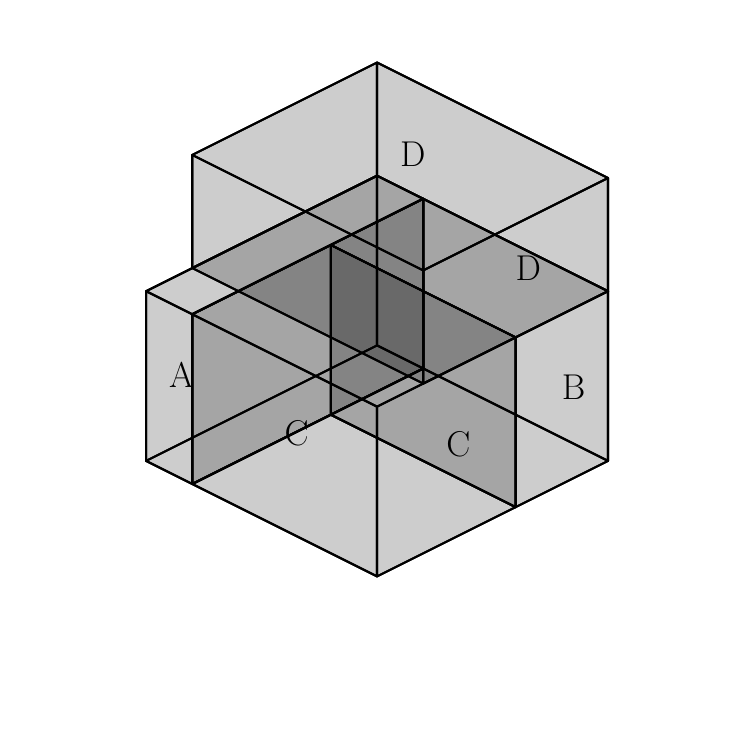}
        \minipdf{0.32}{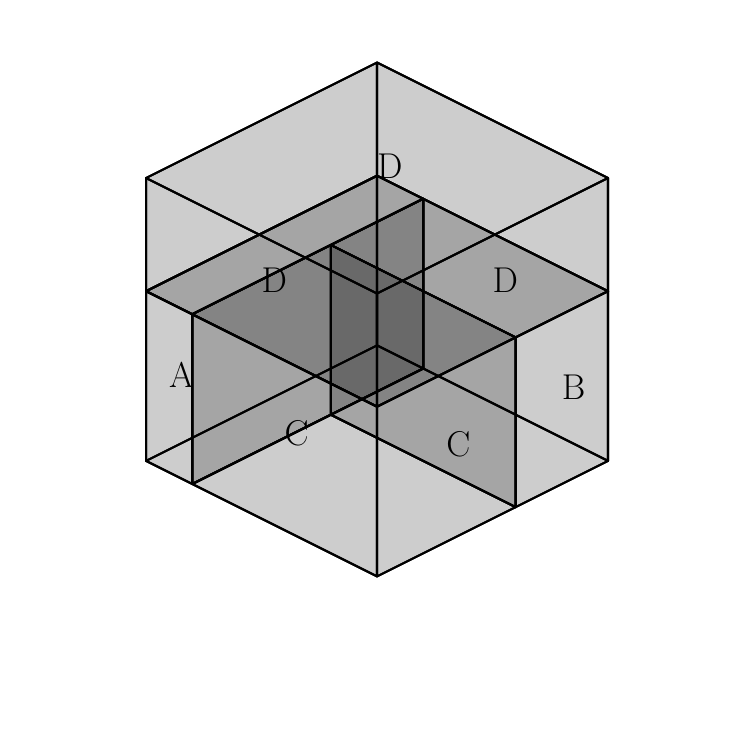}
        \caption{An example of a block deletion operation of a $3-$floorplan.}}
    \label{fig:block-delete}
\end{figure}

In the rest of this sub-section, we show that for each corner $q$ there is a unique block $B$ incident to $q$ and, more importantly, $B$ has a unique shifting facet (Lemma~\ref{lem:unique-shift}).

Let $\mathcal{F}_{B,q}$ be the set of facets of $B$ that are incident to a corner $q$. We denote by $b(\mathcal{F}_{B,q})$ the set of borders of the facets of $\mathcal{F}_{B,q}$.

\begin{lem}\label{lem:total-order}
    Let $B$ be a block of a $d$-floorplan $\mP$ and $q$ corner of $B$ that is not on the boundary of $\mP$.
    The relation touch (see Definition~\ref{def:touch}) is a total order on $b(\mathcal{F})_{B,q}$.
\end{lem}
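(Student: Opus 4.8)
The plan is to set up the combinatorial geometry carefully and then reduce the claim to two routine verifications: that "touch" is a partial order on $b(\mathcal{F}_{B,q})$, and that it is in fact total. Recall that $q$ is a corner of the block $B$ not lying on the boundary of $\mathcal{P}$. The facets of $B$ incident to $q$ are exactly the $d$ facets of $B$ whose punctual coordinate matches the corresponding coordinate of $q$: one facet $f_i$ of each axis $i \in \{1,\dots,d\}$. Each such facet $f_i$ lies in a border $b(f_i)$ of axis $i$, and since the borders have pairwise distinct axes, I can index the elements of $b(\mathcal{F}_{B,q})$ by the axis $i$. The key geometric input is the tatami condition: borders of distinct axes never cross, they can only properly touch (the $\TT$-junction picture on the right of Figure~\ref{fig:face-cross-not-cross}). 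So for any two distinct axes $i \neq j$, the borders $b(f_i)$ and $b(f_j)$ cannot cross, and since both contain the point $q$ their interiors or closures meet there; hence one properly touches the other, which is exactly comparability in the touch relation.

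First I would establish \emph{reflexivity} and \emph{comparability} directly from the definitions: reflexivity is immediate since $f \text{ touch } f$ by the third bullet of Definition~\ref{def:touch}, and comparability for distinct axes follows from the tatami condition as sketched above (no crossing $+$ common incidence at $q$ forces a proper touch in one direction). The heart of the matter is \emph{antisymmetry} and \emph{transitivity}. For antisymmetry I would argue that if $b(f_i)$ properly touches $b(f_j)$ then the interior of $b(f_i)$ meets $b(f_j)$ along part of the boundary of $b(f_j)$ but \emph{not} vice versa, because at a $\TT$-junction the "stem" abuts the "bar": the border playing the role of the stem has its boundary touching the interior of the bar, and this relationship is inherently directed. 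I would make this precise by comparing, for the two axes $i$ and $j$, which border's $j$-th (resp.\ $i$-th) coordinate extent strictly contains the other's at the junction, so that the touching relation records which border terminates against which.

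For transitivity, suppose $b(f_i) \text{ touch } b(f_j) \text{ touch } b(f_k)$ with $i,j,k$ distinct axes, all three borders incident to $q$. The plan is to encode each border $b(f_i)$ by the orientation data at $q$: namely, on which side of the hyperplane $\{x_i = q_i\}$ the border "lives" (it extends into the half-space containing $B$), together with the collection of axes in whose directions it is bounded at $q$. Proper touching $b(f_i) \to b(f_j)$ should translate into an inequality or containment between these side-and-extent data, and since $q$ is interior to $\mathcal{P}$ all the relevant coordinate comparisons are strict and well defined. Transitivity then follows by chaining these containments. Once reflexivity, antisymmetry, transitivity and comparability are all in hand, the relation is a total order, completing the proof.

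The step I expect to be the main obstacle is pinning down antisymmetry rigorously, i.e.\ proving that proper touching at $q$ is genuinely asymmetric rather than symmetric. The subtlety is that two facets can meet at $q$ in a configuration that \emph{looks} symmetric, and one must invoke the genericity (Definition~\ref{def:generic}, every border is a single facet) together with the tatami condition to rule out the symmetric case and show exactly one of the two borders terminates against the other. Making the coordinate-extent comparison at the junction precise enough to simultaneously yield antisymmetry and transitivity, while cleanly handling the bookkeeping of which of the $d$ axes are "bounding" each border at $q$, is where the real work lies; everything else reduces to unwinding Definition~\ref{def:touch} and applying the no-crossing hypothesis.
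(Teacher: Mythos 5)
Your proposal has a genuine gap at the heart of the lemma, namely comparability (totality). You assert that because $b(f_i)$ and $b(f_j)$ both contain $q$ and cannot cross, ``one properly touches the other.'' This does not follow from Definition~\ref{def:touch}: both borders may contain $q$ --- and indeed the whole edge $e=f_i\cap f_j$ --- only on their \emph{boundaries}, in which case the interior of neither border meets the other, and neither touches the other. Ruling out exactly this configuration is the real content of the totality claim, and it cannot be done by inspecting the two borders alone: the paper's argument passes to the block $B'$ lying on the far side of $e$ near $q$, exhibits two points of $B'$ (obtained by crossing $f_i$ and $f_j$ respectively, which is legitimate precisely because neither border extends beyond $e$) whose connecting segment leaves $B'$, and concludes that $B'$ would fail to be convex. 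Your proposal assumes away this step, so as written it proves nothing beyond reflexivity.

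The transitivity step is likewise not established. The configuration to exclude is a genuine $3$-cycle $b(f)\to b(f')\to b(f'')\to b(f)$ of proper touches (the pinwheel of Figure~\ref{fig:non-transitive}), which is perfectly consistent with pairwise comparability; your plan to ``encode each border by orientation data at $q$'' and ``chain containments'' has no content here, since touch is not a containment relation and no such monotone invariant is identified. The paper again resorts to convexity of a neighboring block, reflecting a point successively through the hyperplanes of $f'$ and $f$ and showing the images all lie in one block that would then be non-convex. Finally, you single out antisymmetry as the main obstacle, but it is the easy part: if two borders of distinct axes each properly touch the other, their interiors must intersect, i.e.\ they cross, contradicting the tatami condition (and two distinct borders of the same axis are disjoint). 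So the difficulty is located exactly where your proposal is vaguest, and the two convexity arguments that carry the proof are missing.
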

\begin{proof}
    Let $f$ and $f'$ be two facets of $\mathcal{F}_{B,q}$.
    \begin{itemize}
        \item {\bf Reflexivity.} By definition $b(f)$ touches itself.
        \item {\bf Antisymmetry.} If $b(f)$ and $b(f')$ touch each other, then either $b(f)$ and $b(f')$ cross each other or $b(f)=b(f')$. Since $\mP$ is a $d$-floorplan, there are no crossings, hence the latter case occurs.

        \item {\bf Totality.} Assume that neither $b(f)$ touches $b(f')$ nor $b(f')$ touches $b(f)$, and let us find a contradiction (see Figure~\ref{fig:face-cross-not-cross} on the right). Let us consider the edge $e = f \cap f'$. Since $b(f)$ doesn't touch $b(f')$, $e$ doesn't intersect the interior of $b(f')$. By symmetry, it doesn't intersect the interior of $b(f)$. Hence $e$ is part of the boundary of $b(f)$ and $b(f')$. Let $\epsilon$ be a positive number arbitrarily small. Let $p_b$ be a point on the segment $q,\bar{q}$ at distance $\epsilon$ from $q$. Let $p_f$ and $p'_f$ be two points such that  $p_b$ is on the segment $[p_f,p'_f]$ and the segments $]p_b,p_f[$ and $]p_b,p_f'[$ intersect respectively facets $f$ and $f'$. Let $p'_b$ be the symmetric of $p_b$ with respect to the edge $e$. Since the the border $b(f')$ doesn't go beyond $e$, $p_f$ and $p_b'$ belongs to the same block $B'$. Similarly, $p'_f$ and $p_b'$ belongs also to $B'$. Since $p_b$ is between $p_f$ and $p'_f$ and not in $B'$, this means that $B'$ is not convex, which is a contradiction.

        \item {\bf Transitivity.} If $d=2$ $\mathcal{F}_{B,q}$ contains only 2 facets, so the property trivially hold. Let us consider the case $d>2$. Let $f''$ be another facet of $\mathcal{F}_{B,q}$ such that $b(f)$ touches $b(f')$ and $b(f')$ touches $b(f'')$. Since the touch relation is total, either $b(f)$ touches $b(f'')$ or $b(f'')$ touches $b(f)$. Let us assume the latter case, and let us find a contradiction. To do so we proceed as for the totality property. Let $e = f \cap f'$. Let $\epsilon$ be a positive number arbitrarily small. Let $p_b$ be a point on the segment $q,\bar{q}$ at distance $\epsilon$ from $q$. Let $p"_f$ and $p_e$ be two points such that  $p_b$ is on the segment $[p"_f,p_e]$ and the segments $]p_b,p"_f[$ and $]p_b,p_e[$ intersect respectively facet $f"$ and and the edge $e$. Let $p_1$ be the symmetric of $p"_f$ with respect to the hyperplane containing $f'$. Let $p_2$ be the symmetric of $p_1$ with respect to the hyperplane containing $f$. Since the border of $b(f')$ doesn't cross $f"$, $p"_f$ and $p_1$ belong to the same block $B'$. Similarly, $p_1$ and $p_2$ belong also to $B'$ since $b(f)$ doesn't cross $f'$ and $p_2$ and $p_e$ belong to the same block since $b(f")$ doesn't cross $f$. We deduce that $p"_f$ and $p_e$ belong to $B'$ and that $B'$ is not convex which is a contradiction.

              \begin{figure}[!htb]
                  \center{\minipdf{0.30}{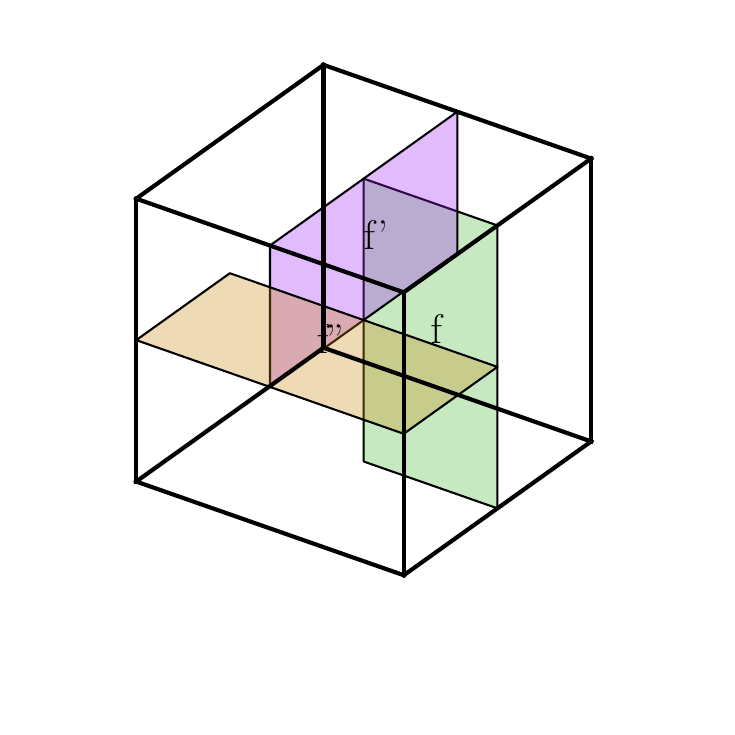}}
                  \caption{Non-transitive configuration.}
                  \label{fig:non-transitive}
              \end{figure}

    \end{itemize}
\end{proof}

\begin{lem}\label{lem:unique-shift}
    Let $B$ be a block of a $d$-floorplan $\mP$ and $q$ a corner of $\mP$. $B$ has a unique shifting facet.
\end{lem}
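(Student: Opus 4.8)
The plan is to identify the shifting facet with the greatest element of the total order furnished by Lemma~\ref{lem:total-order}. Write $q$ for the given corner of the bounding box, $B$ for the block incident to it, and $\bar q$ for the opposite corner of $B$; up to a reflection assume $q=q_{\min}(B)$ and $\bar q=q_{\max}(B)$, so that $B$ attains the bounding-box minimum in every coordinate at $q$. The set $\mathcal F_{B,\bar q}$ consists of exactly $d$ facets, one of each axis, namely the upper facets $f_1,\dots,f_d$ of $B$; a shifting facet is by definition an $f_j$ with $b(f_j)=f_j$, which in particular forces $f_j$ to be an inner facet (a facet lying on $\partial\mP$ carries no border). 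First I would apply Lemma~\ref{lem:total-order} at the corner $\bar q$ to the inner facets of $\mathcal F_{B,\bar q}$: the relation \emph{touch} (Definition~\ref{def:touch}) is then a total order on their borders, and I let $b(f^{\ast})$ be its greatest element, reading $u\le v$ as ``$u$ touches $v$''.

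Next I would establish \emph{uniqueness}: if $b(f)=f$ for some axis-$j$ facet $f=f_j$, then $b(f)$ must be the greatest element, so at most one such facet can exist. Indeed, suppose $b(f)$ properly touched some border $b(f')$ of axis $j'\neq j$; then $\operatorname{int}(f)\cap b(f')\neq\emptyset$, but every point of $\operatorname{int}(f)$ has $j'$-coordinate strictly below $\bar q_{j'}=x_{j',\max}(B)$, whereas $b(f')$ lies in the hyperplane $x_{j'}=\bar q_{j'}$ — a contradiction. Hence $b(f)$ properly touches nothing, so by totality every other border touches $b(f)$; thus $b(f)$ is the (unique) greatest element, and uniqueness follows.

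For \emph{existence} I would show that the greatest element already satisfies $b(f^{\ast})=f^{\ast}$. If not, the border $b(f^{\ast})$, of axis $j^{\ast}$, strictly contains $f^{\ast}$, hence extends past one $(d-2)$-face of $f^{\ast}$ in some transverse coordinate $k$. Because $B$ sits against the corner $q$, we have $x_{k,\min}(B)$ equal to the bounding-box minimum, so the extension cannot run below $B$; it must therefore cross $x_k=\bar q_k$ toward $x_k>\bar q_k$, which also forces $f_k$ to be inner. A short coordinate check then produces a point of $\operatorname{int}(b(f^{\ast}))\cap b(f_k)$ — take $x_{j^{\ast}}=\bar q_{j^{\ast}}$, $x_k=\bar q_k$, and all remaining coordinates slightly below those of $\bar q$ — and the tatami condition guarantees the two borders do not cross; hence $b(f^{\ast})$ properly touches $b(f_k)$, i.e.\ $b(f^{\ast})<b(f_k)$, contradicting maximality. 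So $b(f^{\ast})=f^{\ast}$, and $f^{\ast}$ is a shifting facet.

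I expect the existence step to be the main obstacle: one must rule out extensions occurring \emph{away} from $\bar q$ (this is exactly where the placement of $B$ against the bounding-box corner is used) and then verify the proper-touch configuration in coordinates, all while keeping the bookkeeping of which facets at $\bar q$ are inner. The remaining degenerate case, in which every facet of $B$ at $\bar q$ lies on $\partial\mP$ (so $B$ is the whole box and $\mP$ has a single block), is trivial and can be excluded.
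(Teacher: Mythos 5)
Your proof is correct and follows the same route as the paper: both identify the shifting facet with the extremal element of the touch order supplied by Lemma~\ref{lem:total-order}, then show that this border equals the facet itself while every other facet's border strictly exceeds it. Your write-up is in fact somewhat more careful than the paper's terse argument (notably about which end of the order carries $b(f)=f$, the coordinate verification of the proper-touch, and facets of $B$ lying on the boundary of $\mP$), but the underlying idea is identical.
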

\begin{proof}
    By Lemma~\ref{lem:total-order}, there is a unique facet $f$ of $\mathcal{F}_{B,\bar{q}}$ such that $b(f)$ touches every border of $b(\mathcal{F}_{B,\bar{q}})$.
    Moreover, since $f$ and $b(f')$ have a common point $\overline{q}$, the facet $f$ touches $b(f')$ for every $f' \in \mathcal{F}_{B,\bar{q}}$. Hence $f$ touches each border of  $b(\mathcal{F}_{B,\overline{q}})$.
    This implies that $b(f)=f$.

    Moreover, for each other facet $f'$, $f$ touches $b(f')$, which means that the intersection of $f$ with the interior of $b(f')$ is non-empty. As $f$ and $f'$ are the facets of the same block, the intersection of $f$ with the interior of $f'$ is empty. Hence $b(f')\neq f'$. In other words, $f$ is the unique shifting facet of $B$.
\end{proof}

In the study of $d$-floorplans, the block deletion plays a crucial role as it
gives a natural definition of the parent of a $d$-floorplan and thus of the generating tree of $d$-floorplans . The encoding
of this generating tree is explained in Section~\ref{sec:gen_tree}.

\section{A Generating tree of $d$-floorplans}
\label{sec:gen_tree}
Let us define a generating tree based on the block deletion operation. Given a $d$-floorplan $\mP$ with more than one block, we define the parent of $\mP$ denoted $p(\mP)$ as the floorplan obtained after a block deletion with respect to the maximal corner $q_{\max}$ of $\mP$. This defines a generating tree on $d$-floorplans whose root is the $d$-floorplan with only one block. The \emph{children} of a $d$-floorplan $\mP$ are the floorplans $\mP'$ such that $p(\mP')=\mP$.

\begin{example}
    In Figure~\ref{fig:gen-tree-fp-2d} and~\ref{fig:gen-tree-fp-3d} are shown
    the first levels of the generating trees of $2$-floorplans and
    $3$-floorplans induced by the block deletion.
    \begin{figure}
        \centering
        \includegraphics[scale=1.3]{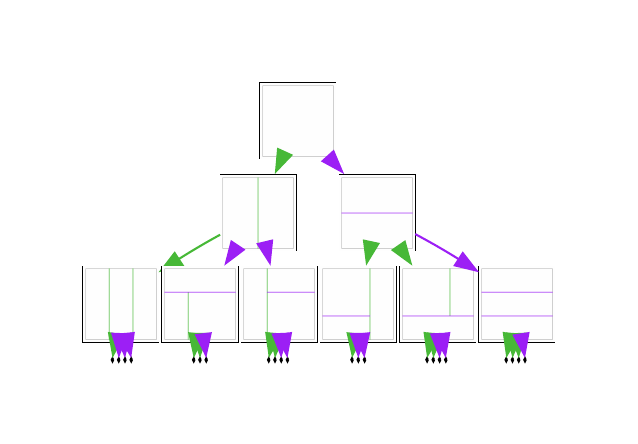}
        \caption{The first levels of the $2$-floorplan's generating tree}
        \label{fig:gen-tree-fp-2d}
    \end{figure}
    \begin{figure}
        \centering
        \includegraphics[scale=0.67]{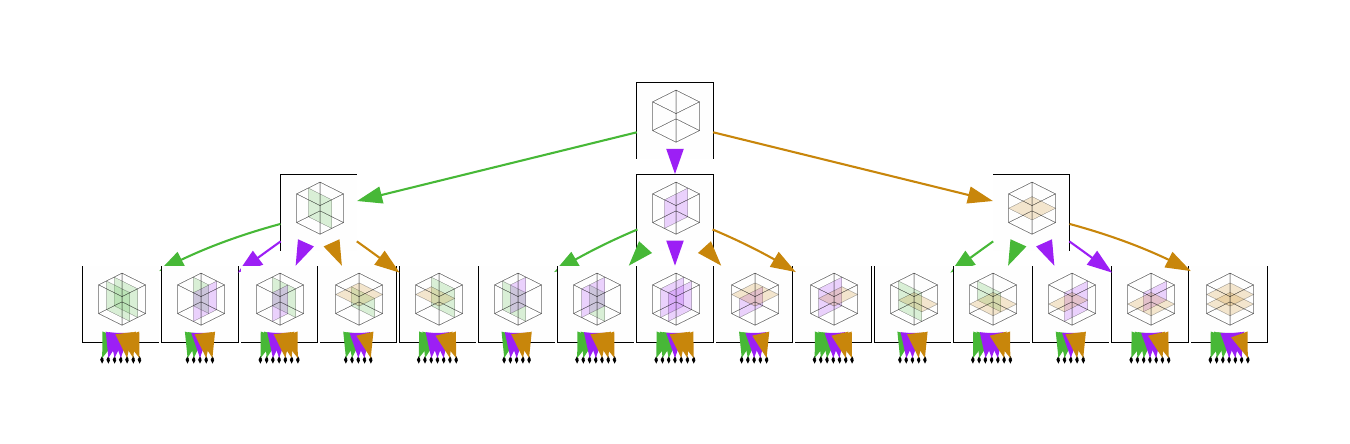}
        \caption{The first levels of the $3$-floorplan's generating tree}
        \label{fig:gen-tree-fp-3d}
    \end{figure}
\end{example}

In the rest of the section we will first show how to characterize the children of a $d$-floorplan. For this purpose, we will introduce the notion of \emph{pushable facet} and \emph{pushable corner}. Then we will show how to compute the pushable corners of the children of a $d$-floorplan, knowing only the pushable corners of the parent. Based on this, we will define a generating tree isomorphic to the one on $d$-floorplans, which allows us to enumerate efficiently the first $d$-floorplans.

\subsection{Pushable corners and block insertion}
In order to extract the structure of the generating tree, one first needs to introduce the inverse of a block deletion, an operation which we call a \emph{block insertion}. We define here this block insertion with respect to the maximal corner $q_{max}$, the definition for the other corners follow. We call \emph{maximal block}, the block that contains $q_{max}$.

Before a block deletion with respect to $q_{max}$, the shifting facet $f$ of the maximal block is its lower facet of a given axis $i$. Moreover, it was also the union of upper facets of axis $i$ of blocks of $\mP$. After the block deletion, $f$ becomes a facet that is included in the upper facet of axis $i$ of the bounding box of $\mP$, such that it contains contains $q_{max}$ and it is the union of maximal facets of axis $i$ of blocks of $\mP$.

A \emph{pushable facet} $f$ of axis $i$  (with respect to $q_{max}$) is a facet of axis $i$ included in the bounding box of $\mP$, that contains $q_{max}$ and that is the union of upper facets of axis $i$ of blocks of $\mP$. A \emph{pushable corner} of axis $i$ is a corner $q$ that is the minimal corner of a pushable facet of axis $i$. Remark that a pushable corner must be the minimal corner of an upper facet of a block, but not all minimal corners of upper facets of blocks are pushable corners. 

In Figures~\ref{fig:treelabel2d} and~\ref{fig:ins}, the pushable corners are materialized with arrows. Given a pushable corner $q$ of axis $i$ and its associated facet $f_q$, we define the \emph{block insertion} associated with $q$ as the $d$-floorplan obtained by flattening the blocks below the $f_q$ in the direction $i$ and inserting a block in the newly created space. This operation defines a mapping between the pushable corners of a $d$-floorplan and its children.  Let $q_{new}$ be the opposite corner of $q_{\max}$ of the newly created block. $q_{new}=(x_1, \dots, x_i-\alpha,x_{i+1},\dots,x_d)$ for a value of $\alpha$ small enough so that $x_i-\alpha$ is greater than the $i$-th coordinate of borders of axis $i$ of $\mP$. We denote $q_{new}^j$ the projection of $q_{new}$ on the maximal facet of axis $j$ of the bounding box of $\mP$.

\begin{figure}[!tbh]
    \centering
    \center{\minipdf{0.31}{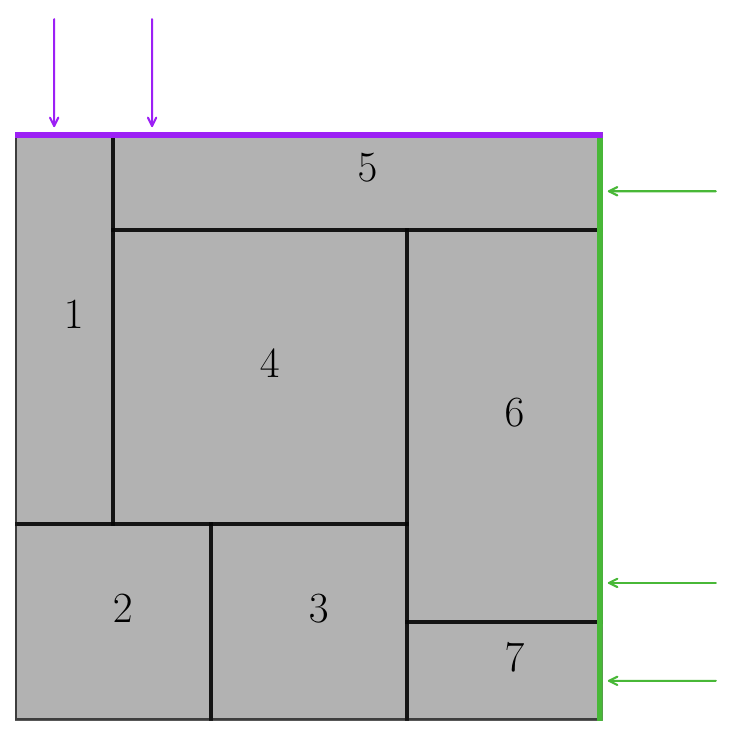} \minipdf{0.31}{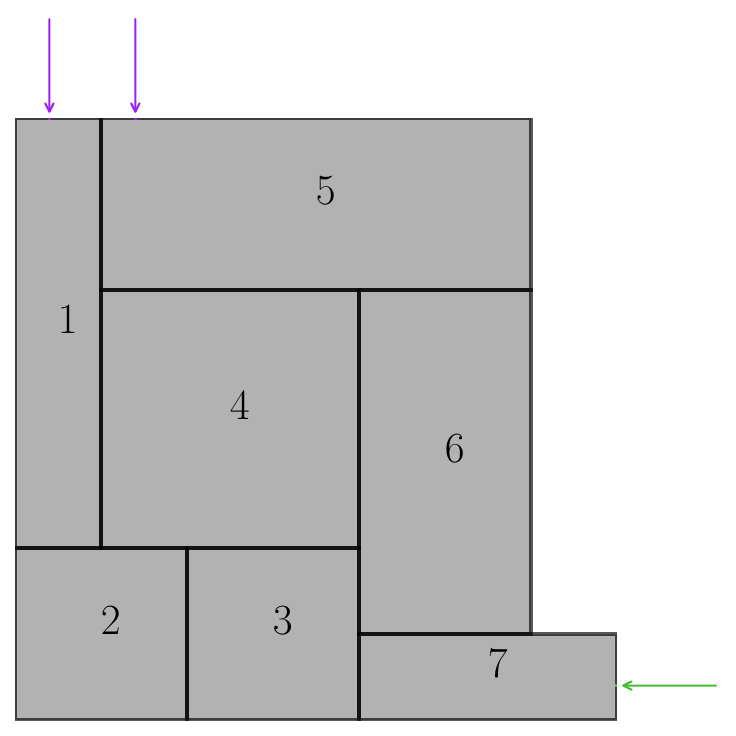} \minipdf{0.31}{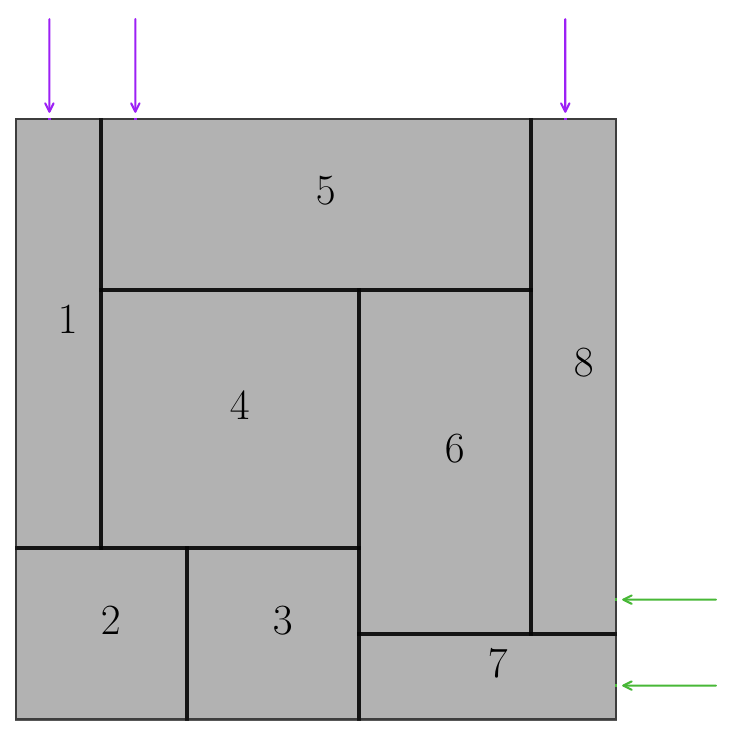}}
    \caption{Block insertion on a $2$-floorplan. Here we push on the pushable corner of block 6.}
    \label{fig:treelabel2d}
\end{figure}

\begin{figure}[!htb]
    \center{	\minipdf{0.325}{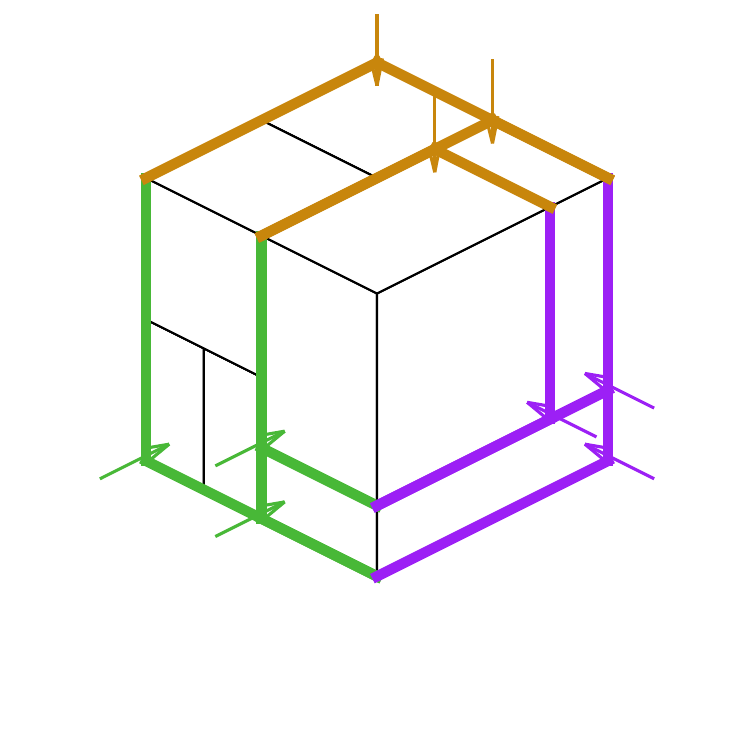}
        \minipdf{0.325}{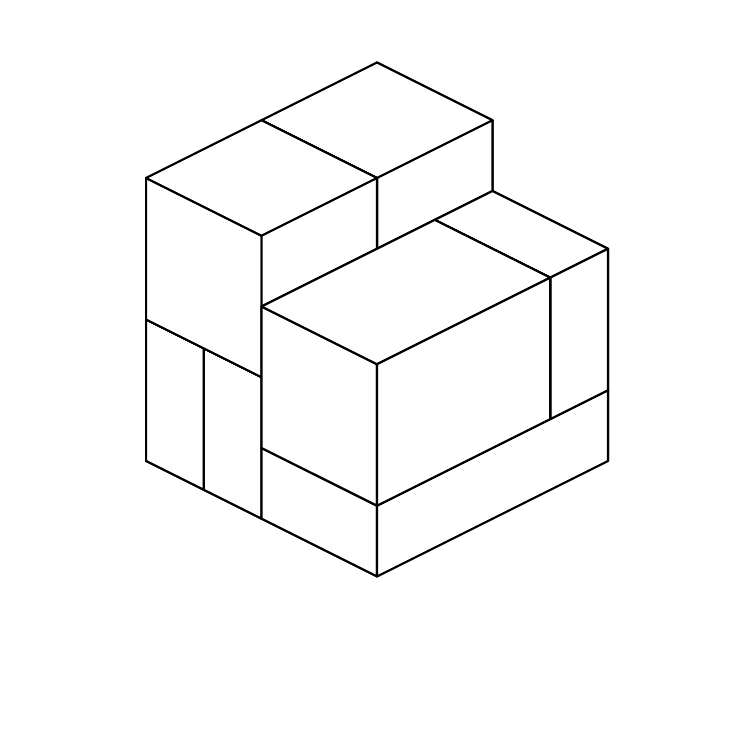}
        \minipdf{0.325}{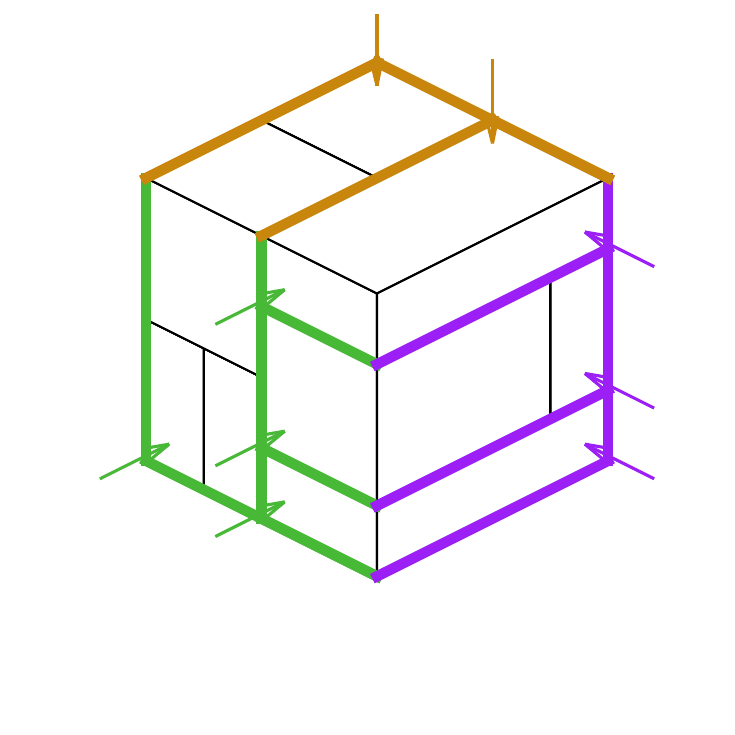} }
    \caption{An example of a block insertion on a $3$-floorplan.}
    \label{fig:ins}
\end{figure}

In $2$-floorplans, there is exactly one pushable corner of axis $x$ per rightmost block and one pushable corner of axis $y$ per topmost block. In higher dimensions, the set of pushable corners is more complex to describe.

\subsection{Pushable corners of the children of a $d$-floorplan}

We say that a pushable corner $q'=(x'_1,\dots,x'_d)$ of axis $i$ \emph{is shadowed by} a pushable corner $q=(x_1,\dots,x_d)$ if there exists $j\neq i$ such that $x'_j > x_j$.
One can remark that the pushable faces of a given axis are nested. This induces a total order on the pushable corners. The pushable corners of axis $i$ that are shadowed by a pushable corner $q$ of the same axis $i$ are exactly the pushable corners in the interior of the facet $f_q$. In $2$-floorplans, there are no other shadowed pushable corners. As one can see in Figure~\ref{fig:ins}, this is not the case in higher dimensions.

\begin{lem}\label{lem:pushable}
    Let $\mP$ be a $d$-floorplan and $q$ a pushable corner of axis $i$.
    The pushable corners of the child $\mP'$ of $\mP$ with respect to $q$ are the pushable corners of $\mP$ that are not shadowed by $q$ plus $\{ q_{new}^j, 1 \leq j \leq d \}$.
\end{lem}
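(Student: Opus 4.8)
The plan is to analyse the pushable corners of each axis separately by tracking how the insertion changes the covering of the corresponding bounding facet, and to show that a single uniform mechanism — whether the newly inserted facet fits inside an old pushable facet or protrudes beyond it — governs both the survival of old corners and the appearance of the new ones.

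First I would fix notation. Since the bounding box and the corner $q_{\max}$ are unchanged by the insertion, I record that, for every axis $k$, a pushable corner of axis $k$ is the minimal corner of a rectangle lying on the upper facet of axis $k$ of the bounding box, containing $q_{\max}$, and \emph{exactly tiled by whole upper-$k$-facets} of blocks (the subtle point being that ``union of upper facets equals the rectangle'' forbids any contributing facet from sticking out). I would then describe precisely the difference between $\mP$ and $\mP'$: the inserted block $B_{new}$ occupies the thin slab $x_i\in[x_{i,\max}-\alpha,x_{i,\max}]$ over the cross-section of $f_q$, and — since block insertion is the inverse of the block deletion of Lemma~\ref{lem:unique-shift} — the only change to the covering of each upper facet of axis $k$ of the bounding box is confined to this slab, where the truncated facets of the flattened blocks are replaced by the single upper-$k$-facet $F_k$ of $B_{new}$, whose minimal corner is exactly $q_{new}^k$ (for $k=i$ this facet is $f_q$ itself and $q_{new}^i=q$). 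Everything outside the slab is untouched up to weak equivalence.

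Next I would establish the dichotomy for an old pushable corner $p$ of axis $k$, with facet $f_p$. After the insertion, $f_p$ is still exactly tiled by whole upper-$k$-facets if and only if $F_k\subseteq f_p$, and computing extents shows this inclusion is equivalent to $p_j\le q_j$ for all $j\neq i,k$ (the coordinate $j=i$ is automatic because $p_i<x_{i,\max}$), i.e. to $p$ not being shadowed by $q$. If instead $p$ is shadowed, say $p_{j_0}>q_{j_0}$, then $F_k$ protrudes across the hyperplane $x_{j_0}=p_{j_0}$; exhibiting a witness point of $f_p$ inside the slab and the cross-section of $f_q$, arbitrarily close to that boundary, shows that near it the unique covering facet is $F_k$, so $f_p$ can no longer be written as a union of whole upper-$k$-facets and $p$ ceases to be a pushable corner. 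This is where the tatami condition and genericity enter, guaranteeing both that $F_k$ really is the covering facet there and that its bounding facets are borders.

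Finally I would treat the new corners and completeness. The facet $F_k$ is itself a pushable facet, and because the lower facets of $B_{new}$ are borders it cannot be extended, so its minimal corner $q_{new}^k$ is a genuine pushable corner (new for $k\neq i$). Using the nestedness of the pushable facets of a fixed axis, the chain of axis-$k$ pushable facets of $\mP'$ has $F_k$ as its innermost member, and every larger member must coincide with an old facet outside the slab while containing $F_k$, hence is an old non-shadowed facet; this gives completeness axis by axis. For $k=i$ the argument specialises to the already noted fact that the shadowed corners of axis $i$ are exactly those interior to $f_q$, now absorbed under $B_{new}$. Taking the union over all axes yields the statement. I expect the different-axis case $k\neq i$ to be the main obstacle: it is the genuinely higher-dimensional phenomenon, and the crux is proving the exact equivalence between ``$F_k$ protrudes from $f_p$'' and ``$p$ is shadowed,'' together with the fact that a protruding $F_k$ truly breaks the tiling.
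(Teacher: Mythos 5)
Your plan is correct and follows essentially the same route as the paper, which organizes the argument into four claims (shadowed corners die, non-shadowed survive, the $q_{new}^j$ are pushable, no other corner becomes pushable) driven by exactly your mechanism: all changes are confined to the inserted slab, and an old facet $f_p$ survives iff the new block's facet $F_k$ is contained in it, which is the non-shadowing condition. Your completeness step via nestedness is a minor repackaging of the paper's observation that any pushable facet of $\mP'$ is still a union of facets of $\mP$.
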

\begin{proof}
    To prove this lemma, we will prove the following four properties that directly imply the result:
    \begin{enumerate}
        \item The shadowed pushable corners are not pushable in $\mP'$.
        \item The non-shadowed pushable corners are pushable in $\mP'$.
        \item $\{ q_{new}^j, 1 \leq j \leq d \}$ are pushable corners in $\mP'$.
        \item A corner that is not pushable in $\mP$ is not pushable $\mP'$.
    \end{enumerate}
    Let us prove these four properties.

        {\bf 1.}{\it The shadowed pushable corners are not pushable in $\mP'$.} Let $q'$ be a shadowed corner. First, assume that $q'$ is of axis $i$. The corresponding block in $\mP$ no longer intersects the maximal facet of axis $i$ of the bounding box of $\mP$ and $q'$ is a point of the interior of the maximal block. Hence it is not a pushable corner of $\mP'$. Now assume that $q'$ is of axis $j \neq i$. The facet $f_{q'}$ is the facet $f_{q_{new}^j}$. Because $q$ shadowed $q'$, $q_{new}^j$ is not in $f_{q'}$. Hence
    $f_{q_{new}^j}$ is not included in $f_{q'}$. So $f_{q'}$ is not the union of some block facets. So $q'$ is not pushable in $\mP'$.

        {\bf 2.}{\it The non-shadowed pushable corners are pushable in $\mP'$.} Let $q'$ be a non-shadowed pushable corner of axis $j$. As it is non-shadowed, the facet $f_{q'}$ is included in the facet $f_q$. The facet $f_{q'}$ is the union of facets of $\mP$. Hence $f_{q'}$ is also the union of facets of $\mP'$.

        {\bf 3.}{\it $\{ q_{new}^j, 1 \leq j \leq d \}$ are pushable corners in $\mP'$.} Let $1 \leq j \leq d$. The facet $f_{q_{new}^j}$ is a single facet in $\mP'$. Hence it is a pushable facet.

        {\bf 4.}{\it A corner that is not pushable in $\mP$ is not pushable $\mP'$.} Assume that there is a pushable corner $q'$ in $\mP'$ that is not pushable in $\mP$. The facet $f_{q'}$ is the union of facets in $\mP$. All these facets are also facets of $\mP$ except the newly created facet $f_{new}$. By construction, $f_{new}$ is the union of facets of $\mP$. Hence $f_{q'}$ is also a pushable facet of $\mP$, which is a contradiction.
\end{proof}

\subsection{Encoding the generating tree}

In the generating tree of $d$-floorplans, the number of children of a floorplan is determined by its set of pushable corners. One can remark that the relevant information about pushable corners is not the values of the coordinates of the corners, but simply the order of the values of the coordinates of the corners. We can thus encode the set of pushable corners by replacing their coordinates by their ranks among the pushable corners with respect to each axis.

Let us consider a set of pushable corners $\mathcal{Q}$. Let $q$ be a pushable corner of axis $i$. We encode $q$ by a vertical vector by replacing its $i$-th entry by the rank (starting at 0) of its $i$-th coordinate among corners of $\mathcal{Q}$ and we replace its $i$-th coordinate by "$.$", meaning that it has the largest rank on this coordinate. The \emph{label} of a $d$-floorplan $\mP$ with $\mathcal{Q}$ as a list of pushable corners is the concatenation of the vector of the pushable corners of $\mP$. For instance, the label associated with the $3$-floorplan on the right in Figure~\ref{fig:ins} is $\left(\vecttx{.}{0}{0}\vecttx{.}{1}{0}\vecttx{.}{1}{1}\vecttx{.}{1}{2}\vectty{0}{.}{0}\vectty{0}{.}{1}\vectty{0}{.}{2}\vecttz{0}{0}{.}\vecttz{0}{1}{.}\right)$.

A vector is of axis $i$ if its $i$-th entry is "$.$".

A vector $v'$ of axis $i'$ is \emph{shadowed} by another vector $v$ of axis $i$, if $v'_j > v_j$ for some $j \not \in \{i, i'\}$.

Given $\mathcal{L'}$ a list of vectors, and $i$ an axis, we define the \emph{new core vector associated with $\mathcal{L'}$ on the axis $i$} :
\[
    v_{\text{new}} :=
    \begin{matrix}
        \max_1     \\
        \vdots     \\
        \max_{i-1} \\
        \max_i + 1 \\
        \max_{i+1} \\
        \vdots     \\
        \max_d
    \end{matrix}
\]
where $max_j$ is the maximum of the $j$-th entry of the vectors in $\mathcal{L}$. Let $v_{\text{new}}^j$ be the vector obtained by replacing the $j$-th entry by "$.$".

Now we are ready to define a generating tree $T_d^l$ on vectors that is isomorphic to the generating tree of $d$-floorplans.

The root of $T_d^l$ has the label composed of a list of $d$-vectors such that the entries of the $i$-th vector are 0 except on the $i$-th entry which is "$.$".

Each node with label $\mathcal{L}$ has $|\mathcal{L}|$ children, one per vector. The label of the child associated with the vector $v$ of axis $i$ is $\mathcal{L'} \cup \{v_{\text{new}}^j, 1 \leq j \leq d\}$, where $\mathcal{L'}$ are the vectors of $\mathcal{L}$ that are not shadowed by $v$ and where $v_{\text{new}}$ is the new core vector associated with $\mathcal{L'}$ on the axis $i$.

\begin{figure}
    \centering
    \includegraphics[scale=0.69]{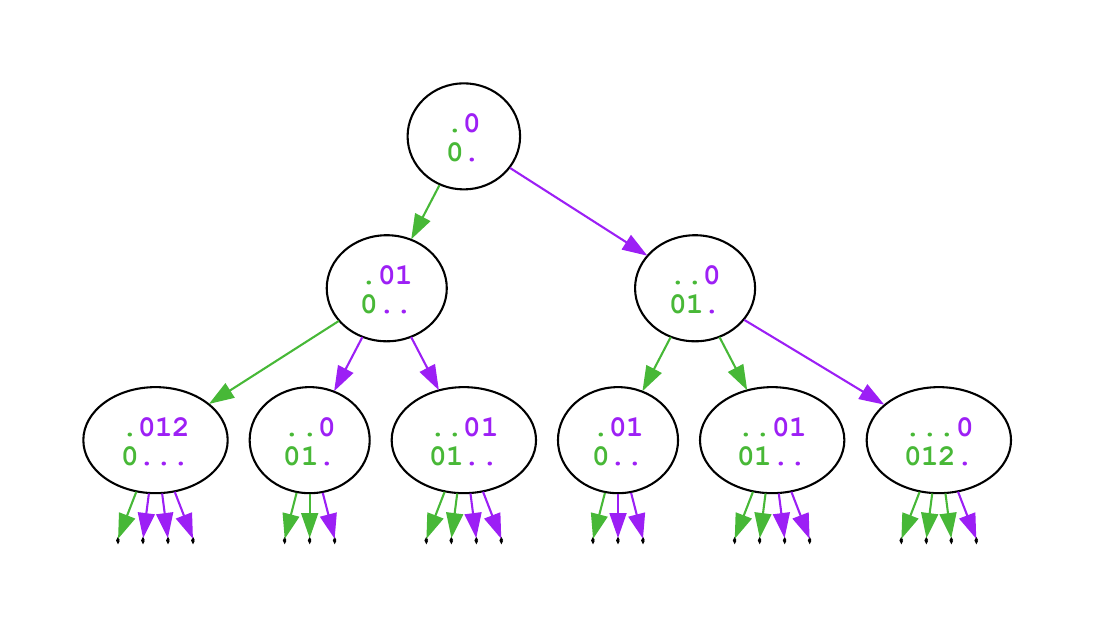}
    \caption{The first three levels of the generating tree $T_d^l$.}
    \label{fig:gen-tree-2d}
\end{figure}

Using the labeling and the rewriting rules above, one can find the first numbers of the enumeration sequence of $d$-floorplans for different values of $d$, see Table~\ref{tab:seqFlor}

Note that the sequences found for $d \ge 3$ do not appear in OEIS.

\begin{table}[!htb]
    \center{\begin{tabular}{|c||c|c|c|c|}
            \hline
            $n \backslash d$ & 2          & 3             & 4            & 5             \\
            \hline
            \hline

            1                & 1          & 1             & 1            & 1             \\
            \hline

            2                & 2          & 3             & 4            & 5             \\
            \hline

            3                & 6          & 15            & 28           & 45            \\
            \hline

            4                & 22         & 93            & 244          & 505           \\
            \hline

            5                & 92         & 651           & 2392         & 6365          \\
            \hline

            6                & 422        & 4917          & 25204        & 86105         \\
            \hline

            7                & 2074       & 39111         & 278788       & 1221565       \\
            \hline

            8                & 10754      & 322941        & 3193204      & 17932745      \\
            \hline

            9                & 58202      & 2742753       & 37547284     & 270120905     \\
            \hline

            10               & 326240     & 23812341      & 450627808    & 4151428385    \\
            \hline
            11               & 1882960    & 210414489     & 5497697848   & 64839587065   \\
            \hline
            12               & 11140560   & 1886358789    & 67979951368  & 1026189413865 \\
            \hline
            13               & 67329992   & 17116221531   & 850063243936 &               \\
            \hline
            14               & 414499438  & 156900657561  &              &               \\
            \hline
            15               & 2593341586 & 1450922198319 &              &               \\
            \hline
        \end{tabular}}
    \caption{Values of $|F_n^{d}|$ for the first values of $n$.}
    \label{tab:seqFlor}
\end{table}

\begin{example}
    Figure~\ref{fig:gen_tree_3d} shows the first levels of the generating trees of $2-$floorplans and $3-$floorplans obtained using the labeling introduced in this section.
    \begin{figure}[!htb]
        \centering
        $\vcenter{\hbox{\includegraphics[angle=90,origin=c,height=0.75\textwidth]{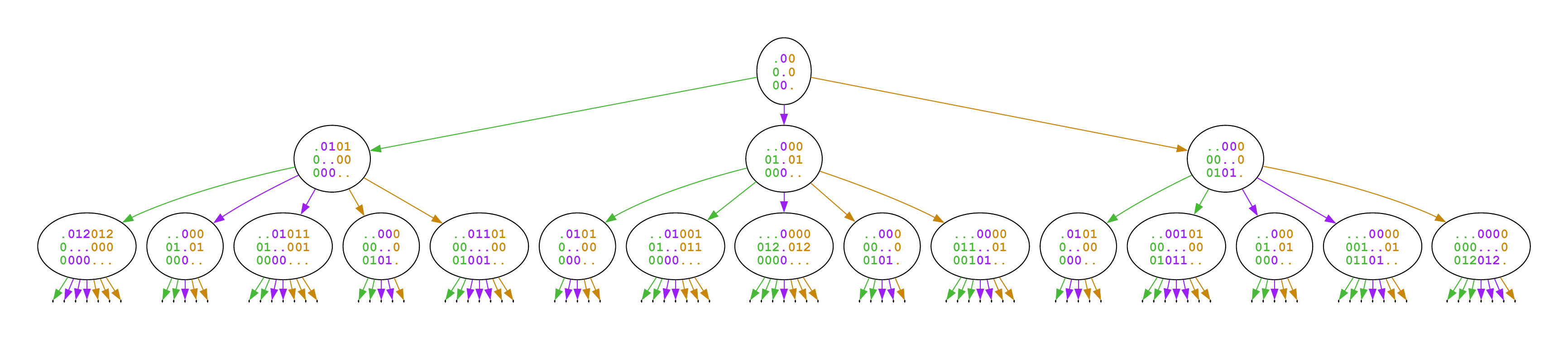}}}$
        $\vcenter{\hbox{\includegraphics[angle=90,origin=c,height=0.9\textwidth]{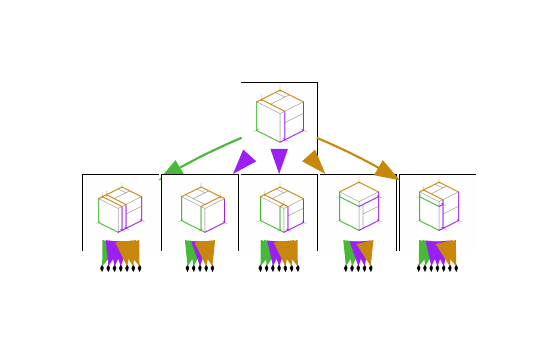}}}$
        \caption{On the left: Generating trees of  $2$-floorplans. On the right: Part of the generating tree in $3d$.}
        \label{fig:gen_tree_3d}
    \end{figure}

\end{example}

\section{Bijection between higher dimensional floorplans and \texorpdfstring{$d$}{d}-permutations}
\label{sec:bijec}

In~\cite{ackerman2006bijection}, a bijection between $2$-floorplans and Baxter permutations was proposed. This bijection relies on peeling orders of the blocks of the floorplans with respect to two corners of the floorplans (see Figure~\ref{fig:2d-orders}).

\begin{figure}[!htb]
    \center{\minipdf{0.31}{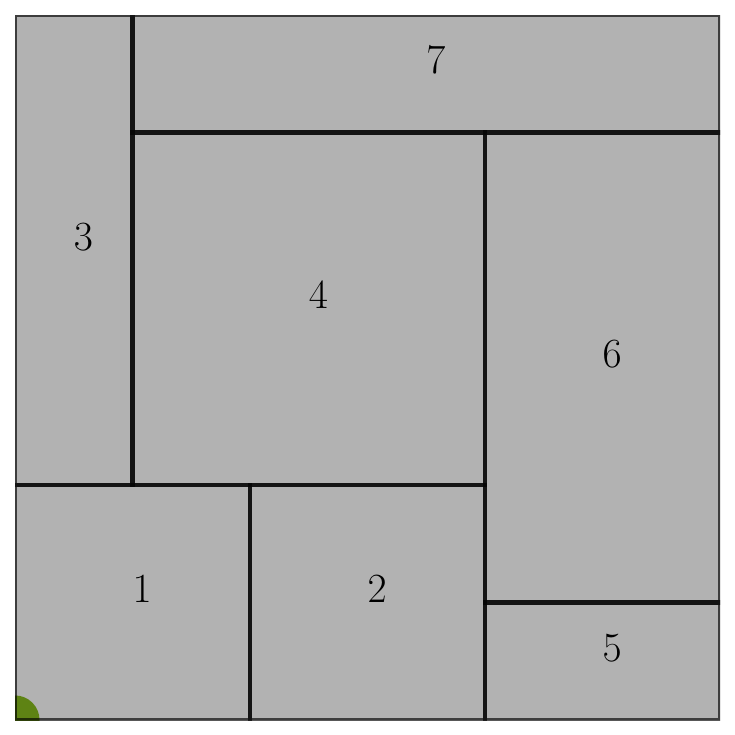} \minipdf{0.31}{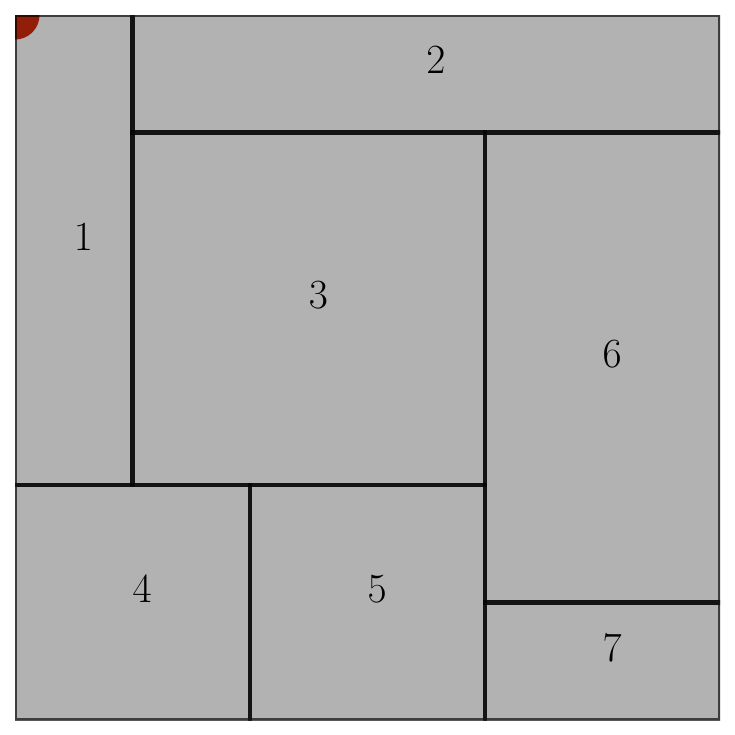} \begin{minipage}{0.31\textwidth}\center{\resizebox{0.9\textwidth}{!}{\input{figs/bp-fp2d-ex1.pgf}}}\end{minipage}}
    \caption{A mosaic floorplan, the labels of the blocks are given by the peeling order of the floorplan with respect to the top-left corner (on the left) and the bottom left corner (in the middle). On the right, the corresponding Baxter permutation 4513762.}
    \label{fig:ex_peeling-order}
\end{figure}

In this section, we propose a generalisation of this bijection between $2^{d-1}$-floorplans and the set of $d$-permutations characterised by some forbidden patterns. In this generalisation, we consider $d$ peeling orders. As it will become clear in the sequel, the choice of the peeling corners is crucial.

Without any loss of generality, we consider partitions of the unitary $d$-dimensional cube ($x_{i,min}(\mathcal{P})=0$ and $x_{i,max}(\mathcal{P})=1$, for $i=1,\ldots,d$).

\subsection{Definition of the bijection}

In higher dimensions, permutations are generalised by {\it multidimensional permutations}, also called $d$-permutations~\cite{asinowski2010separable,bonichon2022baxter}. A $d$-permutation of $[n]$ is a sequence of $d-1$ permutations  $\bpi =(\pi_1,\ldots,\pi_{d-1})$. Given a $d$-permutation $\bpi$, $\pi_0$ is the identity permutation on $[n]$.

The \emph{diagram} of a $d$-permutation $\bpi$ is the set of points in $P_\bpi:=\{ (\pi_0(i), \pi_1(i),\dots,\pi_{d-1}(i)), i \in [n] \}$.

A permutation can be considered as a total order of $[n]$, similarly a $d$-permutation can be considered as a $(d-1)$-tuple of total orders of $[n]$. Thus, any set of $d$ peeling orders obtained from a $2^{d-1}$-floorplan can be used to define a $d$-permutation and yields a mapping $\phi$ from $2^{d-1}$-floorplans to $d$-permutations. Similarly, one can define a mapping $\psi$ from $d$-permutations to lists of $2^{d-1}$ partial orders (on the same ground set) as in Definitions~\ref{def:direction} and~\ref{def:BP2FP}. It is shown in section~\ref{sec:proof} that these two mappings are bijective.

The block deletion operation also provides a notion of total orders of the blocks of a $d$-floorplan.
\begin{definition}
    Given a corner $q$ of a $d$-floorplan $\mP$ one can perform recursive block deletions using this corner until there remains a single block. We call this operation the \emph{peeling} of $\mP$ with respect to $q$.

    The \emph{peeling order} $ \twoheadiagarrow^q$ of the blocks of $\mP$ is the total order defined by the order of deletion of the blocks during the peeling of $\mP$ with respect to the corner $q$. For two blocks $A$ and $B$, one has $A \twoheadiagarrow^q B$ if $A$ is deleted before $B$ during the peeling.
    \label{peeling}
\end{definition}

\begin{example}
    In Figure~\ref{fig:ex_peeling-order} are given the labels in the peeling orders of the blocks of the first two mosaic floorplans given in Figure~\ref{fig:2dfloor}. On the left, the label of the blocks is given by the peeling order with respect to the bottom-left corner. On the right, the label is given by the one with respect to the top-left corner.

    Figure~\ref{fig:ex-peel-3d} shows the last $3$-floorplan of Figure~\ref{fig:ex_fp}. The label of the blocks is given by their labels in the peeling orders with respect to the corner $q'_0=(0,0,0)$ (on the left), the corner $q'_1=(0,1,0)$ (in the middle), and the corner $q'_2=(0,0,1)$ (on the right).
    \begin{figure}[!htb]
        \center{	\minipdf{0.31}{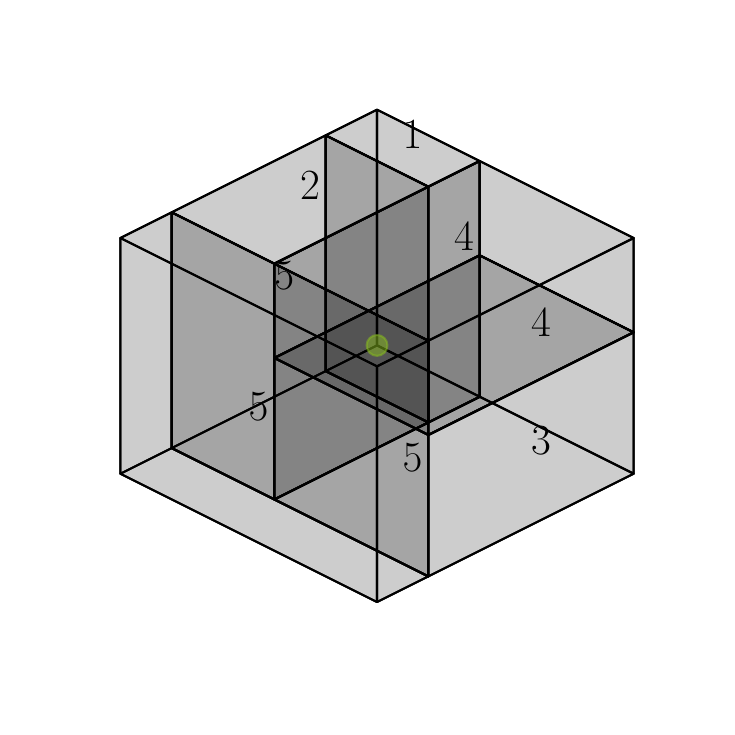}
            \minipdf{0.31}{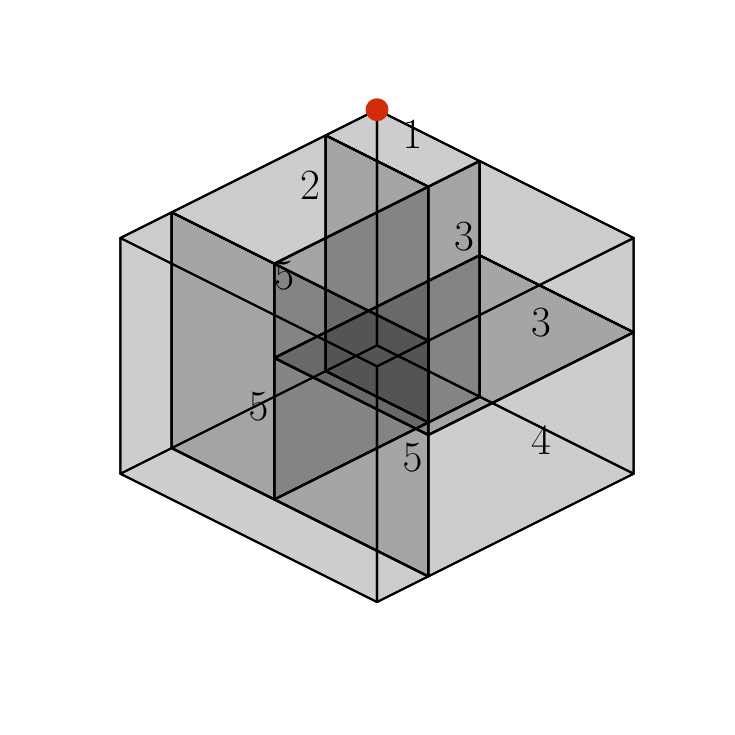}\minipdf{0.31}{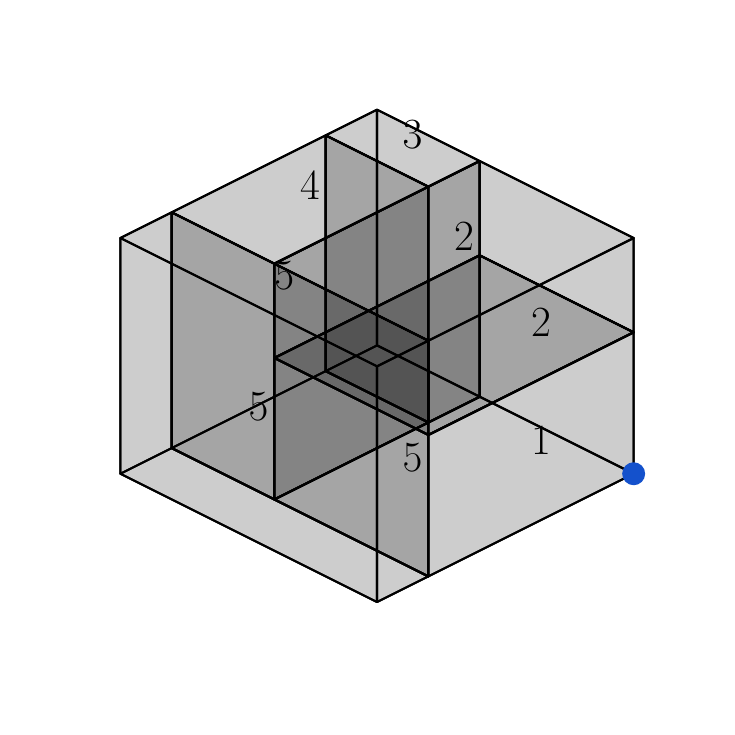}}
        \caption{Peeling orders of the blocks of a $3$-floorplan with respect to the corners $q=(1,1,1)$ (on the left), $q=(0  ,0,1)$ (in the middle), and $q=(0,1,0)$ (on the right).}
        \label{fig:ex-peel-3d}
    \end{figure}
\end{example}

Knowing the order of deletion of the blocks with respect to a corner $q$ gives also the order of deletion with respect to the opposite corner of $q$. Hence, in a $d$-floorplan, there are $2^{d-1}$ independent peeling orders, one per pair of corners.

\begin{definition}
    Let $\mP$ be a $2^{d-1}$-floorplan, let also $\textbf{c}= c_0 \ldots c_{d-1}$ be a set of corners of $\mP$ and let $\sigma_{c_i}$ be the permutation defined by the list of the blocks of $\mP$, ordered according to $\twoheadiagarrow^{c_i}$. The $d$-permutation of $\mP$ with respect to ${\bf c}$ is defined as $\bpi:=(\sigma_{c_1} \sigma_{c_0}^{-1},\ldots, \sigma_{c_{d-1}} \sigma_{c_0}^{-1})$. This defines a mapping $\chi_{\bf c}$ from $2^{d-1}$-floorplans to $d$-permutations given by $\chi_{\bf c}(\mathcal{P}):= \bpi$.
    \label{def:FP2BP}
\end{definition}

Even though any set of $d$ peeling orders of a $2^{d-1}$-floorplan can be used to define a $d$-permutation, the mapping $\chi_{\bf c}$ is not bijective for any such set. For $\chi_{\bf c}$ to be bijective, the set of corners {\bf c} must fulfill the condition that half of the coordinates of each corner are different from the coordinates of the other ones. This condition allows us to prove the different lemmas and theorems of Section~\ref{sec:proof}. Note that, even with such a condition on the corner set, there are still several possible choices for {\bf c}. In this paper, we consider an example of such a corner set that is given in Definition~\ref{def:deletion_order}.

\begin{definition}
    Let $\mP$ be a $2^{d-1}$-floorplan with $n >1$ blocks. We define the set of \emph{canonical corners $\textbf{q}$} as the set of $d$ corners $q_0 \ldots q_{d-1}$ for which the coordinates of $q_i$ are given by an alternation of packets of $2^{d-1-i}$ zeros and ones such that
    \begin{equation}
        q_i(\mathcal{P}) = \big( \underbrace{0,\ldots,0}_{2^{d-1-i}}, \underbrace{1,\ldots,1}_{2^{d-1-i}}, \underbrace{0,\ldots,0}_{2^{d-1-i}}, \; \ldots \; \big) \; .
    \end{equation}
    We further call the set of peeling orders with respect to these corners the \emph{canonical peeling orders}.
    \label{def:deletion_order}
\end{definition}

For instance, for a $2$-floorplan, the canonical corners are: $q_0=(0,0)$ and $q_1=(0,1)$. For a $4$-floorplan $q_0=(0,0,0,0)$, $q_1=(0,0,1,1)$, and $q_2=(0,1,0,1)$. Note that a $3$-floorplan can also be seen as a $4$-floorplan, where each block has width 1 on the last coordinate. In that case, the canonical peeling orders can also be computed with the following corners $q'_0=(0,0,0)$, $q'_1=(0,0,1)$, and $q'_2=(0,1,0)$ (see Figure~\ref{fig:ex-peel-3d}).

We can now define the mapping $\phi$ as a restriction of the mapping $\chi_{\bf c}$ to the set of canonical corners {\bf q}:
\begin{definition}
    We call $\phi$, the mapping $\chi_{\bf q}$ where {\bf q} is the canonical set of corners. Starting from a $2^{d-1}$-floorplan $\mP$, the mapping $\phi$ gives a $d$-permutation $\phi(\mathcal{P})$ called the \emph{canonical $d$-permutation} of $\mP$.
    \label{def:FP2BP}
\end{definition}

Let us also define a mapping $\psi$ that extracts partial orders from the points of a $d$-permutation.  A \emph{direction} ${\bf dir}$ is an element of $\{+1,-1\}^d$. A direction is \emph{positive} if its first element is $+1$. In a $d$-permutation, there are $2^{d-1}$ positive directions. Let also ${\bf dir}$ be a positive direction, the \emph{opposite} of ${\bf dir}$, denoted $(-{\bf dir})$, is the direction such that $(-{\bf dir})=(-1)\times {\bf dir}$.

\begin{definition}
    Let $\bpi$ be a $d$-permutation with $n$ points and let $p_1$ resp. $p_2$ be two points in $\bpi$. The \textbf{direction} between the two points $(p_1,p_2)$, denoted $\textbf{dir}(p_1,p_2)$, is defined as the direction ${\bf dir}$ such that $\left(\text{sign}(x_0(p_2)-x_0(p_1)),\ldots, \text{sign}(x_{d-1} (p_2) - x_{d-1}(p_1))\right)={\bf dir}$. We say that \emph{$p_1$ precedes $p_2$} with respect to the positive direction ${\bf dir}$ if $p_1=p_2$ or  $\textbf{dir}(p_1,p_2)=({\bf dir})$. Similarly, we say that $p_1$ follows $p_2$ in the positive direction ${\bf dir}$ if $p_1=p_2$ or $\textbf{dir}(p_1,p_2)=(-{\bf dir})$. These two relations define two partial orders $<_{\bf dir}$ and $>_{\bf dir}$ of the points of $\bpi$.
    \label{def:direction}
\end{definition}

\begin{definition}
    Let $\bpi$ be a $d$-permutation. Let $F=\{{\bf dir}^1,\ldots,{\bf dir}^{2^{d-1}}\}$ be the set of positive directions in dimension $d$. Let also $<_{{\bf dir}_i}$ and $>_{{\bf dir}_i}$  be the partial orders of the points of $\bpi$ with respect to the direction ${\bf dir}_i$. The mapping $\psi$ gives the $2^{d-1}$ partial orders $(<_{{\bf dir}^1}|\ldots| <_{{\bf dir}^{2^{d-1}}})$ of a $d$-permutation $\bpi$. $\big(\psi(\bpi) := (<_{{\bf dir}^1}| \ldots |$ $<_{{\bf dir}^{2^{d-1}}})\big)$
    \label{def:BP2FP}
\end{definition}

As explained in subsection~\ref{sec:peeld}, a $2^{d-1}$-floorplan can be described by a set of $2^{d-1}$ partial orders on the same ground set. These partial orders correspond to the direction relations of the blocks of this $2^{d-1}$-floorplan. Additionally, the partial orders of a $d$-permutation have the same properties as the direction relations of the blocks of a $2^{d-1}$-floorplan. One can thus try to build a $2^{d-1}$-floorplan from the partial orders of the points of a $d$-permutation (the algorithm to do so is provided in section~\ref{sec:proof}). This is done by associating to each point of a permutation, a block in the $2^{d-1}$-floorplans to be built. However, as it is proven in section~\ref{sec:proof}, only sets of partial orders coming from a subset of the $d$-permutations can be used to build a $2^{d-1}$-floorplans (this subset is defined in the next section).

\subsection{Forbidden pattern and main theorem}
\label{ssec:mainthm}

We first recall from~\cite{bonichon2022baxter} some useful definitions on $d$-permutations and forbidden patterns. We then give the main theorem of this paper.

\begin{definition}
    Let $\textbf{i}=i_1 \dots i_{d'}$ be a sequence of indices in $\{0,\ldots,d\}$, let also $\bsig=(\sigma_1, \dots , \sigma_{d-1})$ be a $d$-permutation of size $n$. The \emph{projection} on $\textbf{i}$ of $\bsig$ is the $d'-$permutation given by $\text{proj}_{\bf i}(\bsig) := (\sigma_{i_2} \sigma_{i_1}^{-1},\dots, \sigma_{i_{d'}} \sigma_{i_1}^{-1}) $. A projection is \emph{direct} if $i_1 < i_2 < \dots < i_{d'}$.
    \label{def:proj}
\end{definition}

\begin{definition}
    Let the $d$-permutation $\bsig = (\sigma_1, \dots , \sigma_{d-1}) \in S^{d-1}_n$ and the $d'-$permutation $\bpi = (\pi_1,\ldots , \pi_{d'-1}) \in S^{d'-1}_k$ with $k \leq n$ and $d'\leq d$. Then \emph{$\bsig$ contains the pattern $\bpi$} if there exists a direct projection $\bsig' = \text{proj}_i(\bsig)$ of dimension $d'$ and a set of indices $c_1 < \ldots < c_k$ s.t. $\sigma'_i(c_1) \ldots \sigma'_i(c_k)$ is order-isomorphic to $\pi_i$ for all $i$. A permutation avoids a pattern if it doesn’t contain it. Let $s$ be a symmetry operation of the $[n]^d$ grid (seen as a $d$-cube), $s(P_\bpi)$ is a diagram of a $d$-permutation that we denote $s(\bpi)$. We also denote by $\Sym(\bpi)$ the family of permutations obtained by applying the symmetries of the $d$-cube on $\bpi$.
    \label{def:avoidance}
\end{definition}

Let also $p_i$ and $p_j$ be the $i^{th}$ and $j^{th}$ points with respect to the axis $0$ in a $d$-permutation $\sigma$. We say that $p_i$ and $p_j$ are \emph{k-adjacents} if one has $\sigma_k(p_i) = \sigma_k(p_j) \pm 1 $. We also say that $p_i$ and $p_j$ are $0$-adjacent if $i= j \pm 1$. One can now define a \emph{generalised vincular pattern} as follows:

\begin{definition}
    A \emph{generalised vincular pattern} $\vinpat{\bpi}{X_0, \ldots, X_{d-1}}$ is a $d$-permutation $\bpi$ of size $k$ along with a list of \emph{adjacencies} given by subsets of $[k-1]$. Given a $d$-permutation $\sigma$, we say that the set of points $p_1 \ldots p_k$ is an occurrence of $\vinpat{\bpi}{X_0, \ldots, X_{d-1}}$ if it is an occurrence of $\bpi$ and if it satisfies that for any $j$ in any $X_k$ the $j^{th}$ and the $j+1^{th}$ point (of the occurrence) with respect to the axis $k$ are $k$-adjacent.
    \label{def:vincular}
\end{definition}

Let us denote by $S^{d-1}_n$ the set of $d$-permutations with $n$ points and by $S^{d-1}_n(\pi_1, \ldots, \pi_m)$ the set of such $d$-permutations avoiding a list of $m$ patterns $\pi_1 \ldots \pi_m$ (that can be vincular).

\begin{example}
    Consider the patterns $k_1=\perm{132}{213}$, $k_2=231$, $k_3= \vinpatd{132}{213}{1}{.}{.}$ and  $k_4= \vinpatd{132}{213}{2}{1}{.}$. The $3$-permutation $\bpi=\perm{1432}{3124}$ contains an occurrence of $k_1$, given by the $1^{st},3^{rd}$ and $4^{th}$ points of $\bpi$, it also contains an occurrence of $k_4$ but not one of $k_3$ because of the adjacency constraint on these patterns. As $\text{proj}_{1,2}(\bpi)= 3421$, $\bpi$ also contains an occurrence of $k_2$.
\end{example}

One can consider classes of pattern-avoiding $d$-permutations, examples of such a class are the separable $d$-permutations that avoid $\Sym(\perm{312}{213})$ and $\Sym({2413})$~\cite{asinowski2010separable} and the Baxter $d$-permutations that avoid $\sbaxpb$ $\sbaxpc$, $\sbaxpa$, and $\sbaxpd$~\cite{bonichon2022baxter}. 

In this paper, we deal with $d-$permutations avoiding $\text{Sym}(\perm{312}{213}), \sbaxpa$. We call this permutation class $F^{d-1}$, and we denote its set of elements with n points as $F^{d-1}_n$.

Let us now state the main result of this section.

\begin{thm}

The mappings $\phi$ and $\psi$ (Def.~\ref{def:BP2FP} and~\ref{def:FP2BP}) define a bijection between $F^{d-1}_n$ and the set of $2^{d-1}$-floorplans with $n$ blocks. One has $\psi= \phi^{-1}$.
    \label{th:final}
\end{thm}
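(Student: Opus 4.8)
The plan is to prove that $\phi$ and $\psi$ are well-defined between the stated sets and are mutually inverse, in four stages. First I would check that $\phi$ lands among $d$-permutations: by Definition~\ref{peeling} each canonical peeling order $\twoheadiagarrow^{q_i}$ is a total order on the $n$ blocks, so each $\sigma_{q_i}$ is a genuine permutation of $[n]$ and $\phi(\mP)=(\sigma_{q_1}\sigma_{q_0}^{-1},\ldots,\sigma_{q_{d-1}}\sigma_{q_0}^{-1})$ is a $d$-permutation. Second, I would show that this image avoids the forbidden patterns, i.e.\ $\phi(\mP)\in F^{d-1}_n$. The strategy is to translate an occurrence of $\Sym(\perm{312}{213})$ (three points) or of the vincular pattern $\sbaxpa$ (four points) into a geometric configuration of blocks, and then derive a contradiction with either genericity (Definition~\ref{def:generic}) or the tatami condition (Definition~\ref{def:dfloorplan}): such a configuration would force a border to fail to be a single facet, or would force two borders of different axes to cross. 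The adjacency decorations of the vincular pattern are exactly what pin the offending pair of blocks down to be consecutive in the relevant peeling order, which is where the crossing becomes unavoidable.

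The heart of the argument is the reverse direction: showing that $\psi$ reconstructs an actual $2^{d-1}$-floorplan from any $\bpi\in F^{d-1}_n$. Given such a $\bpi$, the map $\psi$ produces the $2^{d-1}$ partial orders $<_{{\bf dir}^1},\ldots,<_{{\bf dir}^{2^{d-1}}}$ of Definitions~\ref{def:direction} and~\ref{def:BP2FP}, and one must argue these are simultaneously realizable as the direction relations of a single box partition. I would proceed by induction on $n$, mirroring the block-insertion procedure of Section~\ref{sec:gen_tree}: remove the point of $\bpi$ that is last in the axis-$0$ order, apply the inductive hypothesis to obtain a floorplan, and reinsert the corresponding block. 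The existence and uniqueness of the shifting facet (Lemmas~\ref{lem:total-order} and~\ref{lem:unique-shift}) guarantee that each insertion step yields a genuine $d$-floorplan, while Lemma~\ref{lem:pushable} controls how the valid insertion sites evolve. The pattern-avoidance hypothesis is used precisely here: an occurrence of $\Sym(\perm{312}{213})$ or of $\sbaxpa$ is exactly the obstruction that would make the required reinsertion inconsistent (no valid pushable corner, or a forced border crossing), so its absence guarantees the construction succeeds.

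Finally I would verify the two compositions are the identity. For $\psi\circ\phi=\Id$ on $2^{d-1}$-floorplans, the canonical peeling orders encode exactly the direction relations of $\mP$, so reading these relations back through $\psi$ returns $\mP$. For $\phi\circ\psi=\Id$ on $F^{d-1}_n$, re-peeling the reconstructed floorplan $\psi(\bpi)$ along the canonical corners recovers the partial orders $<_{{\bf dir}^i}$ and hence $\bpi$. The engine making both readings agree is the compatibility between the canonical corner choice and the positive directions: the condition from Definition~\ref{def:deletion_order} that half the coordinates of each canonical corner differ from those of the others is exactly what matches peeling corner $q_i$ with direction ${\bf dir}^i$, so that $\psi$ and $\phi$ describe the same combinatorial data from the two sides.

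The main obstacle is the realizability argument in the third paragraph above. In contrast with the two-dimensional case, where a single Baxter-type condition suffices, here one has $2^{d-1}$ partial orders that must be made geometrically consistent all at once, and the induction must show that no local insertion ever destroys this global consistency. The two families of forbidden patterns are doing genuinely different work: $\sbaxpa$ rules out the Baxter-style obstruction inherited from dimension two, while $\Sym(\perm{312}{213})$ rules out a new, intrinsically higher-dimensional failure of consistency between peeling orders of different axes. Disentangling these two failure modes, and proving that avoiding both is not merely necessary (Stage~2) but also sufficient for a valid floorplan to exist, is where the technical weight of the proof lies.
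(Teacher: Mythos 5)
Your plan follows essentially the same route as the paper: necessity of the pattern conditions via a geometric contradiction with the tatami/crossing condition (the paper's Lemma~\ref{lem:FP2BP}), injectivity by recovering the direction relations from the canonical peeling orders (Lemmas~\ref{lem:cover}, \ref{lem:recover} and~\ref{lem:injection}), reconstruction by inserting blocks in axis-$0$ order so that pattern avoidance guarantees each step is a valid block insertion (Algorithm~\ref{alg:BP2FP} and Lemma~\ref{lem:dir}), and finally the composition checks. The only caveat is that your third stage --- that avoiding $\sbaxpa$ and $\Sym(\perm{312}{213})$ is \emph{sufficient} for every insertion to succeed --- is asserted rather than argued; this is exactly the lengthy case analysis carried out in the paper's Lemma~\ref{lem:dir}, and you correctly identify it as where the technical weight lies.
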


As noted in~\cite[Corollary 3.3]{asinowski2010separable} we can naturally extend the previous theorem to floorplans of arbitrary dimensions (not necessarily a power of 2).
\begin{cor}
    Let $q \leq 2^{d-1}$.
    There is a bijection between the set of $d$-permutations avoiding $2^{d-1}-q$ (among $2^{d-1}$) $d$-permutations of size $2$ and $\text{Sym}(\perm{312}{213}), \sbaxpa$ with the set of $2^{d-1}-q$-dimensional floorplans.
\end{cor}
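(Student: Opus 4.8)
The plan is to deduce this statement from Theorem~\ref{th:final} by viewing a lower-dimensional floorplan as a \emph{degenerate} $2^{d-1}$-floorplan, exactly as a $3$-floorplan was realised as a $4$-floorplan in the discussion following Definition~\ref{def:deletion_order}. Concretely, a $(2^{d-1}-q)$-dimensional floorplan is the same datum as a $2^{d-1}$-floorplan in which $q$ of the $2^{d-1}$ coordinate axes are degenerate, meaning that every block spans the full extent $[0,1]$ in those coordinates (so that $2^{d-1}-q$ axes remain active). The heart of the argument is a dictionary: \emph{a coordinate axis is degenerate if and only if the canonical $d$-permutation avoids the size-$2$ pattern attached to the corresponding direction.} Granting this dictionary, the corollary is just the restriction of the bijection $\phi,\psi$ of Theorem~\ref{th:final} to the sub-class of $F^{d-1}_n$ cut out by these extra size-$2$ avoidances, on top of the defining avoidances $\Sym(\perm{312}{213})$ and $\sbaxpa$ of $F^{d-1}$.

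First I would make the embedding precise on equivalence classes. Given a $(2^{d-1}-q)$-floorplan, I choose the $q$ axes to be trivialised and declare each block to occupy all of $[0,1]$ in them. Since this introduces no new border and no border crossing, the result is a bona fide $2^{d-1}$-floorplan; and because it adds no direction relation along the degenerate axes while preserving every other one, its weak-equivalence class (Definition~\ref{def:weakd}) depends only on the original floorplan. The inverse operation simply forgets the degenerate coordinates, and genericity together with the tatami condition is preserved in both directions because a degenerate axis carries no border that could cross anything. This already sets up a bijection between $(2^{d-1}-q)$-floorplans and those $2^{d-1}$-floorplans having exactly the prescribed $q$ degenerate axes.

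Next I would establish the dictionary through $\psi$. By Definition~\ref{def:direction} the partial order $<_{\mathbf{dir}}$ records precisely the pairs of points realising a positive direction $\mathbf{dir}$, and an occurrence of the size-$2$ $d$-permutation indexed by $\mathbf{dir}$ is exactly a pair of points $p_1,p_2$ with $\dir(p_1,p_2)=\mathbf{dir}$. Hence the $d$-permutation avoids that size-$2$ pattern if and only if $<_{\mathbf{dir}}$ is empty, i.e.\ if and only if no block precedes another along the matching coordinate axis of the $2^{d-1}$-floorplan produced by $\psi$ — which is exactly degeneracy of that axis. Running over the $q$ trivialised axes therefore yields $q$ size-$2$ patterns (one per degenerate axis) whose simultaneous avoidance is equivalent to the underlying $2^{d-1}$-floorplan being $(2^{d-1}-q)$-dimensional, so composing with the map of the previous paragraph gives the claimed bijection, exactly in the spirit of~\cite[Corollary 3.3]{asinowski2010separable}.

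The main obstacle is the realisability bookkeeping. One cannot trivialise an arbitrary $q$-subset of the coordinate axes and still recover \emph{every} $(2^{d-1}-q)$-floorplan, so I would have to single out an admissible family of $q$ directions, compatible with the canonical corners of Definition~\ref{def:deletion_order}, and verify that for this choice the image of the restricted $\phi$ is precisely the set of $(2^{d-1}-q)$-floorplans rather than a proper subset. The remaining verifications — that trivialising an axis creates no border crossing and that projecting one away keeps the partition generic — are routine once one notes that degenerate axes are border-free; the genuinely delicate point is matching the admissible directions with the correct $q$ size-$2$ patterns so that the counts agree and the restricted $\phi$ and $\psi$ remain mutually inverse.
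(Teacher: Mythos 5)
Your proposal is correct and is exactly the argument the paper intends: the paper offers no written proof of this corollary beyond citing the analogue in \cite{asinowski2010separable}, but it sets up precisely your degenerate-axis dictionary in its examples (a $3$-floorplan viewed as a $4$-floorplan with no covering relations in $\overset{4}{\twoheadleftarrow}$, matched to $3$-permutations avoiding one size-$2$ pattern), and your observation that a size-$2$ pattern is avoided iff the corresponding order $<_{\mathbf{dir}}$ is empty iff the matching axis is degenerate is the whole content; your worry about ``realisability bookkeeping'' is unfounded, since any fixed choice of $q$ axes to trivialise works up to relabelling of coordinates. Note that your (correct) count --- $q$ avoided size-$2$ patterns for a $(2^{d-1}-q)$-dimensional floorplan --- does not match the statement's ``avoiding $2^{d-1}-q$'', which indicates an off-by-complement typo in the corollary as printed rather than a gap in your argument.
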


A \emph{guillotine partition} is a $d$-floorplan recursively defined as follows: a $d$-floorplan with one block is a guillotine partition and two guillotine floorplans merged along a common extremal facet of their bounding boxes is a guillotine partition. Guillotine partitions of dimension 2 are called \emph{slicing floorplans}.

A $d$-permutation is \emph{separable} if it is of size 1 or if its diagram can be partitioned into 2 parts $P_1$ and $P_2$ such that there exists a direction ${\bf dir}$ such that $\forall p_1 \in P_1, \forall p_2 \in P_2, \; \textbf{dir}(p_1,p_2)={\bf dir}$. In~\cite{asinowski2010separable} it is shown that separable $d$-permutations are in bijection with $2^{d-1}$-dimensional guillotine partitions. In the same paper the authors also show that separable $d$-permutations are the $d$-permutations avoiding $\Sym(\perm{312}{213})$ and $\Sym({2413})$. From the recursive definitions of guillotine partitions and separable $d$-permutations, we get the following corollary:

\begin{cor}
    The bijection defined by Theorem~\ref{th:final} is a bijection between  $2^{d-1}$-dimensional guillotine partitions and separable $d$-permutations.
\end{cor}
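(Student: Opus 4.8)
The plan is to show that the bijection $\phi$ of Theorem~\ref{th:final} restricts to the desired bijection, using the two available recursive descriptions: a guillotine partition is either a single block or a merge of two guillotine partitions along a common extremal facet, and a separable $d$-permutation is either a single point or a diagram that splits into two parts separated by a fixed direction. First I would record that every separable $d$-permutation lies in $F^{d-1}$: separable permutations avoid $\Sym(\perm{312}{213})$ and the \emph{non-vincular} pattern $\Sym(2413)$, and since any occurrence of the vincular pattern $\sbaxpa=\Sym(\vinpatb{2413}{2}{2})$ is in particular an occurrence of $\Sym(2413)$, avoiding the latter forces avoidance of the former. Hence $\phi$ is defined on separable permutations, and it suffices to prove that $\phi$ sends separable $d$-permutations with $n$ points onto guillotine partitions with $n$ blocks (and back), which I would do by induction on $n$.

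The heart of the argument is a single correspondence that matches the two recursive steps. Recall that $\psi=\phi^{-1}$ reconstructs the $2^{d-1}$ direction relations of a floorplan from the partial orders $<_{{\bf dir}^\ell}$, where ${\bf dir}^1,\ldots,{\bf dir}^{2^{d-1}}$ are the positive directions and ${\bf dir}^\ell$ is identified with the $\ell$-th axis of the $2^{d-1}$-floorplan. I would prove that $\mP=\phi(\bpi)$ admits a guillotine cut along axis $\ell$ if and only if the diagram of $\bpi$ partitions into nonempty parts $P_1,P_2$ with $\textbf{dir}(p_1,p_2)={\bf dir}^\ell$ for all $p_1\in P_1$ and $p_2\in P_2$. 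The key point is that $\textbf{dir}(p_1,p_2)={\bf dir}^\ell$ means exactly that $p_1<_{{\bf dir}^\ell}p_2$ while $p_1$ and $p_2$ are incomparable in every other order $<_{{\bf dir}^{\ell'}}$; translated through $\psi$, this says that every block of $P_1$ precedes every block of $P_2$ in axis $\ell$ and that the two groups are incomparable in all other axes. That is precisely the block configuration of a guillotine cut along axis $\ell$, whose two stacked sub-boxes fill the whole bounding box in every axis but $\ell$. Since the direction relations determine the floorplan uniquely (Theorem~\ref{th:final}), the two conditions are equivalent, and a separating direction may always be taken positive after possibly swapping $P_1$ and $P_2$.

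Finally I would run the induction. The base case $n=1$ is immediate, since a single block is guillotine and the unique $d$-permutation of size $1$ is separable. For $n>1$, a separable $\bpi$ yields a split $P_1,P_2$ along a positive direction ${\bf dir}^\ell$; the sub-permutations carried by $P_1$ and $P_2$ are again separable, so by induction their images are guillotine, and by the correspondence $\phi(\bpi)$ is their guillotine merge along axis $\ell$, hence guillotine. Reading the same argument through $\psi=\phi^{-1}$ gives the converse. The step I expect to be the main obstacle is the \emph{compatibility of $\phi$ with the decomposition}: I must check that the canonical peeling orders, and therefore the induced direction relations, of the whole object restrict on each side of the cut to exactly the canonical peeling orders of the corresponding half, so that the inductive hypothesis applies to $\phi$ of each part rather than to some unrelated re-indexing of its blocks. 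Establishing this restriction property carefully is where the bijective content of Theorem~\ref{th:final} and the precise choice of canonical corners must be invoked.
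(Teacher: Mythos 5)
Your proposal is correct and follows the same route as the paper, which simply asserts that the corollary follows from the recursive definitions of guillotine partitions and separable $d$-permutations (matching a guillotine cut along an axis with a separating direction in the diagram); you in fact supply considerably more detail than the paper does, including the observation that avoidance of $\Sym(2413)$ implies avoidance of the vincular pattern $\sbaxpa$, so that separable $d$-permutations lie in $F^{d-1}$. The restriction step you flag as the main obstacle is routine given Lemma~\ref{lem:cover}: the peeling order with respect to any canonical corner is the union of the compatible direction relations, so on each side of the cut it restricts to the canonical peeling order of that half.
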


\begin{example}
    In dimension $2$, the bijection given above reduces to the one known between mosaic floorplans and Baxter permutations (see~\cite{ackerman2006bijection}). One can summarize the bijection between a $2$-floorplan and a permutation in the following table:

    \begin{table}[!htb]
        \center{\begin{tabular}{|c||c|}
                \hline
                Permutation                           & $2$-floorplans                         \\
                \hline
                \hline

                points                                & blocks                                 \\
                \hline

                positive direction between two points & direction relation between two blocks  \\
                \hline

                x-coordinate of the points            & peeling order w.r.t bottom left corner \\
                \hline

                y-coordinate of the points            & peeling order w.r.t top left corner    \\
                \hline
            \end{tabular}}
    \end{table}

    Figure~\ref{fig:ex_peeling-order} shows a $2$-floorplan and its Baxter permutation obtained through the bijection given above. This defines the permutation $\pi = 4513762$.

    In the $3$-dimensional case, Figure~\ref{fig:bijec} shows an illustration of the extended bijection. To obtain the relation between the $3$-floorplan and the $3$-permutation, one considers a $4$-floorplan for which there is an empty direction relation and that can thus be projected into a $3$-floorplan. In the permutation space this results in considering $3$-permutations that avoid a pattern of size $2$.

    \begin{figure}[!htb]
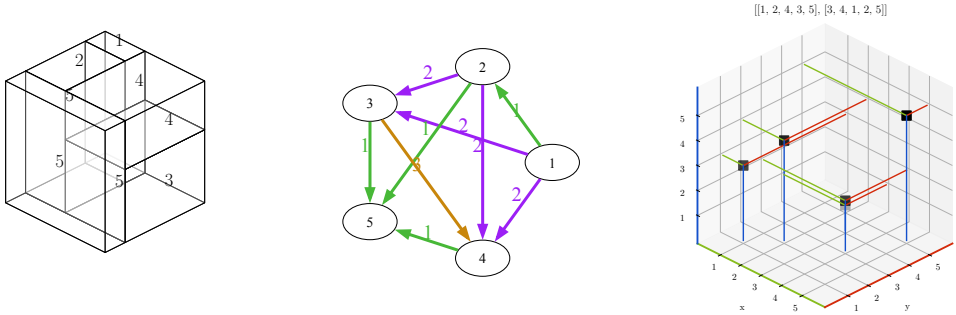

        \center{
            \minipdf{0.30}{figs/EXAMPLE2_FP.pdf}
            \minipdf{0.30}{figs/example2fp-order-all.pdf} \minipdf{0.30}{figs/EXAMPLE2_BP.pdf}}
        \caption{On the left, a $3$-floorplan of size 5. In the middle, the 3 corresponding orders $( \protect\overset{1}{\twoheadleftarrow}, \protect\overset{2}{\twoheadleftarrow}, \protect\overset{3}{\twoheadleftarrow})$.  On the right, the 3-permutation $\bpi=\perm{12435}{34125}$ corresponding to the $3$-floorplan (seen as a $4$-floorplan in which there are no covering relations in $ \protect\overset{4}{\twoheadleftarrow}$).}
        \label{fig:bijec}
    \end{figure}

\end{example}

\section{Proof of the bijection}
\label{sec:proof}

We prove in this section the bijection of Theorem \ref{th:final}. The proof strategy is the following:

\begin{itemize}
    \item We prove different properties of the direction relations and of the peeling orders of $2^{d-1}$-floorplans. These properties are the main tools used at the different steps of the proof of the bijection.
    \item We prove that $\phi$ maps $2^{d-1}$-floorplans to $d$-permutations in $F^{d-1}_n$ and that this mapping is injective.
    \item We prove that the partial orders extracted by the mapping $\psi$ from a $d-$permutation in $F^{d-1}_n$ can be used to construct a $2^{d-1}$-floorplan. We provide an algorithm performing the construction.
    \item We prove that, applying this algorithm to a $d-$permutation in $F^{d-1}_n$ and applying the mapping $\phi$ on the resulting $2^{d-1}$-floorplans gives back the original $d$-permutation.
\end{itemize}

\begin{remark}
    Let $\mP$ be a $d$-floorplan with $n > 1$ blocks. Performing a block
    deletion
    in $\mP$ doesn't change the $\overset{j}{\twoheadleftarrow}$ relations of
    the
    other blocks.
    \label{orderdel}
\end{remark}

\begin{lem}
    Let $\mP$ be a $d$-floorplan and let $A$ and $B$ be two blocks in $\mP$.
    Then, there exists a unique $j$ such that $A \overset{j}{\twoheadleftarrow} B$ or $A \overset{j}{\twoheadrightarrow} B$.
    \label{lem:partial}
\end{lem}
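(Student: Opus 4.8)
The plan is to prove the existence and the uniqueness of the axis $j$ separately, after first recording a monotonicity principle that converts the combinatorial relations into geometric inequalities. Each neighbor step in Definition~\ref{def:dirrel} forces $x_{j,\max}(\text{left block})=x_{j,\min}(\text{right block})$, so telescoping along a chain gives that $A \overset{j}{\twoheadleftarrow} B$ implies $x_{j,\max}(A)\le x_{j,\min}(B)$. Two consequences follow: comparability in a fixed axis can occur in only one orientation, and it requires the open projections of $A$ and $B$ onto axis $j$ to be disjoint. Since $A$ and $B$ have disjoint interiors, their open projections are disjoint in at least one axis, so the set $S$ of such \emph{separating} axes is nonempty and the whole statement reduces to showing that exactly one axis of $S$ actually carries a chain of neighbors. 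I want to stress here the subtlety that drives the difficulty: the neighbor relation only asks that the two facets lie on a \emph{common} border of axis $j$ (not that they overlap in the other coordinates), so a single large border can link blocks that are far apart in the complementary directions, and a chain may therefore ``jump'' transversally to its axis.

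For existence I would peel $\mP$ from $q_{\max}$ (Definition~\ref{peeling}) until one of $A,B$ first becomes the maximal block while the other is still present; by Remark~\ref{orderdel} the deletions performed in between do not alter the direction relations between $A$ and $B$, so it suffices to treat the case where one of them, say $M=\prod_l[x_{l,\min}(M),1]$, is the maximal block of the current floorplan. Because $M$ occupies the top corner, monotonicity shows that every comparability must read $B \overset{j}{\twoheadleftarrow} M$ with $x_{j,\max}(B)\le x_{j,\min}(M)$, and the separation set $S$ is again nonempty. I would then build an explicit neighbor chain from $B$ up to $M$ along an axis of $S$ by following the borders of the lower facets of $M$ back into the partition, using Lemma~\ref{lem:total-order} and Lemma~\ref{lem:unique-shift} to control which of these facets are full borders and which extend beyond $M$.

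Uniqueness is where the tatami condition is essential, and it is the step I expect to be the main obstacle. Suppose $A$ were comparable to $B$ in two distinct axes $j\neq j'$; by monotonicity both lie in $S$, and (using the peeling reduction) I may assume $B \overset{j}{\twoheadleftarrow} M$ and $B \overset{j'}{\twoheadleftarrow} M$ for the maximal block $M$. The two chains must climb in the complementary coordinates by jumping across tall borders, so I would locate along the $j$-chain a border of axis $j$ whose interior meets the level $x_{j'}=x_{j',\max}(B)$ transversally, and along the $j'$-chain a border of axis $j'$ meeting $x_j=x_{j,\max}(B)$; genericity (Definition~\ref{def:generic}, which yields at most one border per coordinate) together with the $\TT$-junction order at the corners $q_{\max}(B)$ and $q_{\min}(M)$ supplied by Lemma~\ref{lem:total-order} should force these two borders of different axes to share an interior point, i.e.\ to cross, contradicting the no-crossing (tatami) condition of Definition~\ref{def:dfloorplan} via Definition~\ref{def:touch}. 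The hard point is precisely this: because borders can be large and the chains jump, projecting to the $(j,j')$-plane destroys the no-crossing property, so the forbidden crossing must be exhibited with the actual borders and the T-junction structure rather than with their projections, and pinning down the exact pair of crossing borders is the crux of the argument.
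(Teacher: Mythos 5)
Your preliminary observations (the telescoping inequality $x_{j,\max}(A)\le x_{j,\min}(B)$, the fact that comparability in a fixed axis has a unique orientation, and the nonemptiness of the set $S$ of separating axes) are correct and harmless, but they only rule out having both orientations in the same axis; they do not produce a chain nor exclude chains in two different axes. The two steps you yourself flag as the real content — constructing a neighbor chain from $B$ up to the maximal block, and exhibiting a concrete pair of crossing borders when two axes are comparable — are exactly the parts that remain unproved in your plan, and your framework makes them genuinely hard: after peeling from $q_{\max}$ alone, the non-maximal block is still an arbitrary block, so the lower facets of $M$ need not lead back to it by any controllable route, and the borders witnessing the two hypothetical chains need not span the cross-section of the floorplan, so there is no reason they must cross. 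You acknowledge this ("pinning down the exact pair of crossing borders is the crux"), which means the proposal is a reduction to an open sub-problem rather than a proof.

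The idea you are missing is the paper's single decisive move: peel not just from $q_{\max}$ but from \emph{every} corner of $\mP$, each time until $A$ or $B$ contains that corner (Remark~\ref{orderdel} guarantees the relations between $A$ and $B$ are preserved throughout). In the resulting floorplan $\mP'$ every corner of the bounding box lies in $A$ or in $B$, and since no block can contain two opposite corners, the corner sets of $A$ and $B$ are complementary sub-products of $\{0,1\}^d$; a counting argument then forces them to agree in all coordinates but one, say $j$. Thus $A$ and $B$ become two slabs spanning the full cross-section transverse to axis $j$, separated only by a smaller floorplan sandwiched between two facets of axis $j$. Existence of the $j$-chain is then immediate (any monotone walk through the sandwich must terminate at $B$, whose facet spans the whole cross-section), and uniqueness is automatic because $j$ is the unique coordinate in which the corner sets of $A$ and $B$ differ — no tatami/crossing argument is needed at all. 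I would encourage you to restructure your proof around this full-corner reduction; your monotonicity lemma then becomes a clean way to justify that the slab configuration admits comparability only in axis $j$ and only in one orientation.
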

\begin{proof}
    By Remark~\ref{orderdel}, one can delete blocks of $\mP$ (using different
    corners) without changing the direction relation of $A$ and $B$. For
    each corner of $\mP$, delete blocks with respect to this corner until either
    $A$ or $B$ contains it and call the resulting floorplan $\mathcal{P}'$.
    A block cannot contain both a corner of a $d$-floorplan and its opposite
    without being the only block in this $d$-floorplan. Thus, in $\mathcal{P}'$
    the block $A$ will contain half of the corners of $\mathcal{P}'$ and
    $B$ the other half. This implies that the blocks $A$ and $B$ have a
    single facet of the same type that is not contained in the boundary of
    $\mathcal{P}'$ and that there may be a smaller $d$-dimensional floorplan
    between these two faces in $\mathcal{P}'$. This situation is shown in Figure
    ~\ref{fig:lemma31}.

    We thus have $A \overset{j}{\twoheadleftarrow} B$ or $B
        \overset{j}{\twoheadleftarrow} A$ for a single $j$ (given by the axis of
    the border not contained in the boundaries of $\mathcal{P}'$) in
    $\mathcal{P}'$. According to Remark~\ref{orderdel} we have the same
    relation between the two blocks in $\mP$.
\end{proof}

\begin{figure}[!htb]
    \centering
    \begin{minipage}{0.5\textwidth}\center{\resizebox{0.9\textwidth}{!}{\input{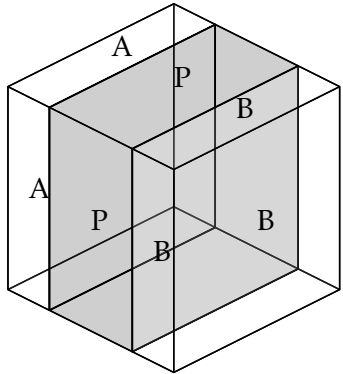}}}\end{minipage}
    \caption{Illustration of Lemma~\ref{lem:partial}.}
    \label{fig:lemma31}
\end{figure}

\begin{remark}
    If a block $A$ follows immediately a block $B$ in any peeling order, then there exists
    a $j$ such that $A$ and $B$ are $j$-neighbors.
    \label{rem:neighbor}
\end{remark}

Given two relations $<_1$ and $<_2$ on a set $X$, we define the \emph{union} $<_1 \cup <_2$ of these two relations: $A <_1 \cup <_2 B$ if $A <_1 B$ or $A <_2 B$. Similarly, we define the \emph{intersection} $<_1 \cap <_2$ of these two relations: $A <_1 \cap <_2 B$ if $A <_1 B$ and $A <_2 B$. Given a corner $q$, a peeling order $\twoheadiagarrow^q$ is \emph{compatible} with the direction relation $\overset{j}{\twoheadleftarrow}$ if the $j$-th coordinate of $q$ equals 0 and is compatible with the direction relation $\overset{j}{\twoheadrightarrow}$ if the $j$-th coordinate of $q$ equals 1. A peeling order $\twoheadiagarrow^q$ is \emph{canonical} if $q$ or $\overline{q}$ is in the canonical set of corners.

\begin{remark}
    \label{rem:compatible}
    If $ A \overset{j}{\twoheadleftarrow}B $ and $\twoheadiagarrow^q$ is compatible with $\overset{j}{\twoheadleftarrow}$ then $A \twoheadiagarrow^q B$.
\end{remark}

\begin{lem}
    \label{lem:cover}
    Given a corner $q$, the peeling order $\twoheadiagarrow^q$ is the union of $d$ direction relations that are compatible with it $\twoheadiagarrow^q$.
\end{lem}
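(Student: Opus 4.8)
The plan is to prove the two inclusions between $\twoheadiagarrow^q$ and the union $U$ of the $d$ direction relations compatible with it. Write $q=(q_1,\dots,q_d)$ with each $q_j\in\{0,1\}$; the compatible relations are $\overset{j}{\twoheadleftarrow}$ for every axis $j$ with $q_j=0$ and $\overset{j}{\twoheadrightarrow}$ for every axis $j$ with $q_j=1$, and $U$ denotes their union. Note that a priori $U$ is only a union of partial orders, so the lemma is asserting that this union happens to coincide with the total order $\twoheadiagarrow^q$.

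For the inclusion $U\subseteq\,\twoheadiagarrow^q$ I would simply invoke Remark~\ref{rem:compatible} together with its mirror image obtained by reflecting the cube in axis $j$: if $q_j=0$ and $A\overset{j}{\twoheadleftarrow}B$ then $A\twoheadiagarrow^q B$, and symmetrically if $q_j=1$ and $A\overset{j}{\twoheadrightarrow}B$ then $A\twoheadiagarrow^q B$. Since each compatible relation is thus contained in $\twoheadiagarrow^q$, so is their union $U$.

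The substance of the proof is the reverse inclusion $\twoheadiagarrow^q\subseteq U$, and this is where I expect the main difficulty. Suppose $A\twoheadiagarrow^q B$, i.e.\ $A$ is deleted strictly before $B$ in the peeling with respect to $q$. Consider the floorplan $\mathcal{P}_A$ present at the step at which $A$ is removed: by definition of the peeling, $A$ is then the block incident to $q$, and since $A$ precedes $B$ the block $B$ is still present in $\mathcal{P}_A$. By Lemma~\ref{lem:partial} there is a unique axis $j$ in which $A$ and $B$ are comparable, and by Remark~\ref{orderdel} this comparability is the same in $\mathcal{P}_A$ as in $\mP$. It remains to check that this unique relation is compatible with $q$: because $A$ contains the corner $q$ of the bounding cube of $\mathcal{P}_A$, its $j$-interval reaches the face $x_j=q_j$. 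If $q_j=0$ then $x_{j,\min}(A)=0$, which rules out $B\overset{j}{\twoheadleftarrow}A$ (that relation forces $x_{j,\min}(A)\ge x_{j,\max}(B)>0$ along the neighbour chain), so necessarily $A\overset{j}{\twoheadleftarrow}B$; symmetrically, if $q_j=1$ then $x_{j,\max}(A)=1$ forces $A\overset{j}{\twoheadrightarrow}B$. In either case the relation between $A$ and $B$ is one of the compatible ones and points from $A$ to $B$, so $(A,B)\in U$.

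Combining the two inclusions gives $\twoheadiagarrow^q=U$, which is the claim. The delicate point is the coordinate argument in the reverse inclusion: it hinges on the fact that the block deleted at each peeling step is incident to $q$ and hence touches the corresponding boundary face of the current cube, which is exactly what forces the unique comparability axis to carry a direction compatible with $q$ rather than its opposite. One must also keep Remark~\ref{orderdel} in mind throughout, since the geometric coordinates of $A$ and $B$ change as blocks are shifted during peeling while their direction relations do not; the coordinate argument is therefore run in $\mathcal{P}_A$, and its conclusion transported back to $\mP$ via that remark.
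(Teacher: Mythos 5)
Your proof is correct, and both inclusions are established soundly: the easy inclusion $U\subseteq\,\twoheadiagarrow^q$ via Remark~\ref{rem:compatible} matches what the paper intends, and your coordinate argument for $\twoheadiagarrow^q\subseteq U$ is valid. Where you diverge is in the mechanism for that harder inclusion. The paper's own proof is a two-line sketch that leans entirely on Remark~\ref{rem:neighbor}: consecutive blocks in the peeling order are $j$-neighbors for some $j$, and the orientation of that neighbor relation is left implicit (as is the passage from immediate successors to arbitrary pairs $A\twoheadiagarrow^q B$, which is not automatic since a union of partial orders need not be transitive). You instead invoke Lemma~\ref{lem:partial} to get the \emph{unique} comparability axis $j$ between $A$ and $B$ directly, transport it to the floorplan $\mathcal{P}_A$ at the moment $A$ is deleted via Remark~\ref{orderdel}, and then use the fact that $A$ is incident to the corner $q$ of the current bounding box to pin down the orientation: $x_{j,\min}(A)=0$ (when $q_j=0$) is incompatible with $B$ preceding $A$, since any neighbor chain from $B$ to $A$ forces $x_{j,\min}(A)\ge x_{j,\max}(B)>0$. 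This buys you a complete argument for arbitrary (not just consecutive) pairs in the peeling order, at the cost of being longer; it is arguably the argument the paper's citation of Remark~\ref{rem:neighbor} is gesturing at but does not spell out.
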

\begin{proof}
    Without loss of generality assume that $q=q_0$ and let us show that $\twoheadiagarrow^q = \bigcup_{j=1}^d \overset{j}{\twoheadleftarrow}$. By remark~\ref{rem:neighbor}, $A \twoheadiagarrow^q B$ implies $A(\bigcup_{j=1}^d \overset{j}{\twoheadleftarrow}) B$. By remark~\ref{rem:neighbor} we get the other implication.
\end{proof}

\begin{lem}
    \label{lem:recover}
    Given a $2^{d-1}$-floorplan, the direction relation $\overset{j}{\twoheadleftarrow}$ (resp. $\overset{j}{\twoheadrightarrow}$) is the intersection of the $d-1$ canonical peeling orders that are compatible with $\overset{j}{\twoheadleftarrow}$ (resp. $\overset{j}{\twoheadrightarrow}$).
\end{lem}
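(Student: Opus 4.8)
The plan is to establish the two inclusions separately, reducing the $\overset{j}{\twoheadrightarrow}$ statement to the $\overset{j}{\twoheadleftarrow}$ one by the symmetry that reverses every peeling order and swaps $0$ and $1$ in all corner coordinates. Fix an axis $j$ of the $2^{d-1}$-floorplan. A canonical peeling order $\twoheadiagarrow^q$ is compatible with $\overset{j}{\twoheadleftarrow}$ precisely when the $j$-th coordinate of its corner $q$ is $0$; hence for each of the canonical corners $q_0,\dots,q_{d-1}$ exactly one orientation qualifies, and I write $\twoheadiagarrow^{c_i}$ for the corresponding order, where $c_i\in\{q_i,\bar q_i\}$ is the corner whose $j$-th coordinate is $0$. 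I will show $\overset{j}{\twoheadleftarrow}=\bigcap_{i=0}^{d-1}\twoheadiagarrow^{c_i}$.

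The inclusion $\overset{j}{\twoheadleftarrow}\subseteq\bigcap_i\twoheadiagarrow^{c_i}$ is immediate from Remark~\ref{rem:compatible}: if $A\overset{j}{\twoheadleftarrow}B$ then, since each $\twoheadiagarrow^{c_i}$ is compatible with $\overset{j}{\twoheadleftarrow}$ by construction, we get $A\twoheadiagarrow^{c_i}B$ for every $i$.

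For the reverse inclusion I would take a pair $A,B$ with $A\twoheadiagarrow^{c_i}B$ for all $i$ and apply Lemma~\ref{lem:partial} to obtain the unique axis $k$ carrying the direction relation of $A$ and $B$, together with its orientation. By Lemma~\ref{lem:cover}, a block precedes another in a peeling order exactly when their unique direction relation is compatible with that order; so, in the case $A\overset{k}{\twoheadleftarrow}B$, the statement $A\twoheadiagarrow^{c_i}B$ holds iff $c_i$ is compatible with $\overset{k}{\twoheadleftarrow}$, i.e. iff the $k$-th coordinate of $c_i$ is $0$. Since $c_i$ was chosen with $j$-th coordinate $0$, unwinding the two cases $(q_i)_j=0$ and $(q_i)_j=1$ shows that this is equivalent to $(q_i)_j=(q_i)_k$. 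Thus the hypothesis becomes the equality of \emph{canonical signatures} $\big((q_0)_j,\dots,(q_{d-1})_j\big)=\big((q_0)_k,\dots,(q_{d-1})_k\big)$.

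The crux is then the separating property built into Definition~\ref{def:deletion_order}: reading the axis index in binary, $(q_i)_m$ is a single prescribed bit of $m$, so $q_1,\dots,q_{d-1}$ recover all $d-1$ bits of any $m\in\{0,\dots,2^{d-1}-1\}$ and the signature map is injective. Equality of signatures therefore forces $k=j$, giving $A\overset{j}{\twoheadleftarrow}B$ as desired. The remaining orientation $A\overset{k}{\twoheadrightarrow}B$ must be excluded: there $A\twoheadiagarrow^{c_i}B$ becomes $(q_i)_j\neq(q_i)_k$ for all $i$, which already fails at $i=0$ because $q_0$ is the all-zero corner, so $(q_0)_j=(q_0)_k=0$. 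This is exactly the role of the reference order $\twoheadiagarrow^{q_0}$: the $d-1$ orders $\twoheadiagarrow^{c_1},\dots,\twoheadiagarrow^{c_{d-1}}$ separate $k$ from $j$ only up to bit-complementation, and $\twoheadiagarrow^{q_0}$ fixes the orientation. I expect this interplay—between the binary separation of axes and the orientation correction supplied by $q_0$—to be the one genuinely delicate point; everything else is bookkeeping built on the Remarks and Lemmas already established.
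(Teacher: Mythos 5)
Your proof is correct and follows essentially the same route as the paper's: the forward inclusion via compatibility, and the reverse inclusion by invoking Lemma~\ref{lem:partial} to pin down the unique relating axis and then using the binary structure of the canonical corners of Definition~\ref{def:deletion_order} to rule out every axis and orientation other than $\overset{j}{\twoheadleftarrow}$. If anything, your write-up is more careful than the paper's at the decisive step, since you make the signature-injectivity argument explicit and separately exclude the reversed orientation via $q_0$, both of which the paper passes over rather tersely.
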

\begin{proof}
    Without loss of generality assume that $j=1$ and let us show that $\overset{1}{\twoheadleftarrow} = \bigcap_{i=0}^{d-1} \twoheadiagarrow^{q_i}$. By remark~\ref{rem:neighbor} we have that $A \overset{1}{\twoheadleftarrow}B $ implies $A \bigcap_{i=0}^{d-1} \twoheadiagarrow^{q_i} B$.
    Now, let us assume that $A$ and $B$ are not comparable with respect to $\overset{1}{\twoheadleftarrow}$. By Lemma~\ref{lem:partial} they must be comparable with respect to $\overset{j}{\twoheadleftarrow}$ for some $j\neq 1$.
    By remark~\ref{rem:neighbor} $A \twoheadiagarrow^{q}$ for each canonical corner $q$ that is compatible with $\overset{j}{\twoheadleftarrow}$. By the construction of the canonical corners, at least one of these compatible corners is of the form $\overline{q}_i$. Hence we don't have $A \twoheadiagarrow^{q_i} B$. Hence we don't have $A \bigcap_{i=0}^{d-1} \twoheadiagarrow^{q_i} B$, which shows the other implication.
\end{proof}
Consider a $4-$floorplan,
the peeling orders can be written as the union of the partial orders:

\begin{align*}
    \twoheadiagarrow^{q_0} = \overset{x}{\twoheadleftarrow} \cup \overset{y}{\twoheadleftarrow} \cup \overset{z}{\twoheadleftarrow} \cup \overset{t}{\twoheadleftarrow},   \\
    \twoheadiagarrow^{q_1} = \overset{x}{\twoheadleftarrow} \cup \overset{y}{\twoheadleftarrow} \cup \overset{z}{\twoheadrightarrow} \cup \overset{t}{\twoheadrightarrow}, \\
    \twoheadiagarrow^{q_2} = \overset{x}{\twoheadleftarrow} \cup \overset{y}{\twoheadrightarrow} \cup \overset{z}{\twoheadleftarrow} \cup \overset{t}{\twoheadrightarrow}.
\end{align*}

The partial orders can be written as the intersection of the peeling orders with respect to the canonical corners:
\begin{align*}
    \overset{x}{\twoheadleftarrow} = \twoheadiagarrow^{q_0} \cap \twoheadiagarrow^{q_1} \cap \twoheadiagarrow^{q_2},            \\
    \overset{y}{\twoheadleftarrow} = \twoheadiagarrow^{q_0} \cap \twoheadiagarrow^{q_1} \cap \twoheadiagarrow^{\overline{q_2}}, \\
    \overset{z}{\twoheadleftarrow} = \twoheadiagarrow^{q_0} \cap \twoheadiagarrow^{\overline{q_1}} \cap \twoheadiagarrow^{q_2}, \\
    \overset{t}{\twoheadleftarrow} = \twoheadiagarrow^{q_0} \cap \twoheadiagarrow^{\overline{q_1}} \cap \twoheadiagarrow^{\overline{q_2}}.
\end{align*}

Remark \ref{rem:diford} is a direct consequence of Lemmas \ref{lem:cover} and \ref{lem:recover}.
\begin{remark}
    Let four blocks $A,B,C,D$ (two blocks can be the same) of a $2^{d-1}-$floorplan. If for two canonical corners $q_1$ and $q_2$, one has $$A \twoheadiagarrow^{{q_1}} B \text{ and } A\twoheadiagarrow^{q_2} B$$ $$C \twoheadiagarrow^{{q_1}} D \text{ and } D \twoheadiagarrow^{q_2} C,$$ then the axis $j$ that defines the direction relation between $A$ and $B$ is different than the one defining the direction relation between $C$ and $D$. The converse is also true.
    \label{rem:diford}
\end{remark}
\begin{lem}
    For any $2^{d-1}-$floorplan $\mP$, the $d$-permutation $\bpi=\phi
        (\mathcal{P})$ is in $F^{d-1}_n$.
    \label{lem:FP2BP}
\end{lem}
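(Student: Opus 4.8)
The plan is to argue by contraposition: I will show that an occurrence in $\bpi=\phi(\mP)$ of any pattern of $\Sym(\perm{312}{213})$ or of $\sbaxpa$ forces a configuration of blocks of $\mP$ that is ruled out by the tatami condition or by the convexity of the blocks. The translation dictionary is supplied by the earlier results: by Definition~\ref{def:FP2BP} the $d$ coordinate axes of $\bpi$ are exactly the $d$ canonical peeling orders $\twoheadiagarrow^{q_0},\dots,\twoheadiagarrow^{q_{d-1}}$, each point of $\bpi$ is a block of $\mP$, and by Lemma~\ref{lem:recover} the pattern of agreements and disagreements between two points across the canonical peeling orders pins down the unique axis $j$ (Lemma~\ref{lem:partial}) of the direction relation $\overset{j}{\twoheadleftarrow}$ between the corresponding blocks. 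Hence a pattern occurrence becomes a list of blocks with prescribed pairwise axes, and the vincular constraints become, via Remark~\ref{rem:neighbor}, prescribed neighbour (shared-border) relations.

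The families $\Sym(\perm{312}{213})$ and $\sbaxpa$ are closed under the symmetries of the $d$-cube. Because such a symmetry acts on the diagram of a $d$-permutation by reversing some peeling-order axes (i.e. replacing a canonical corner $q_i$ by its opposite $\overline{q_i}$, whose peeling order is the reverse of $\twoheadiagarrow^{q_i}$) and permuting the remaining ones — operations corresponding to reflecting the floorplan and relabelling its axes — and because the geometric obstructions exhibited below are themselves invariant under these symmetries, it suffices to exclude the single representatives $\baxpa$ and $\perm{312}{213}$.

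For $\sbaxpa$ the argument is the faithful higher-dimensional transcription of the classical proof that mosaic floorplans yield Baxter permutations. An occurrence lives in a direct projection onto two canonical peeling orders $\twoheadiagarrow^{q_a},\twoheadiagarrow^{q_b}$; the two vincular adjacencies say that one pair of the four blocks is consecutive in $\twoheadiagarrow^{q_a}$ and another pair is consecutive in $\twoheadiagarrow^{q_b}$, so by Remark~\ref{rem:neighbor} each of these pairs shares a border. Remark~\ref{rem:diford} shows the two borders have distinct axes, while the underlying $2413$ value-pattern forces their positions to interleave; the two borders therefore cross, contradicting the tatami condition. I expect this case to go through routinely.

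The main obstacle is the genuinely three-dimensional pattern $\perm{312}{213}$. An occurrence selects three canonical peeling orders $\twoheadiagarrow^{q_{i_1}},\twoheadiagarrow^{q_{i_2}},\twoheadiagarrow^{q_{i_3}}$ and three blocks $A,B,C$ whose induced orders are $A<B<C$, $B<C<A$ and $B<A<C$ respectively; reading off the orientation signatures $(+,-,-)$, $(+,-,+)$, $(+,+,+)$ shows (Remark~\ref{rem:diford}) that the pairs $AB$, $AC$, $BC$ are governed by three pairwise distinct axes in the cyclic ``pinwheel'' orientation characteristic of $\perm{312}{213}$. The crux is a geometric lemma asserting that no such triple can occur in a $2^{d-1}$-floorplan: I would prove it, in the spirit of the totality and transitivity steps of Lemma~\ref{lem:total-order}, by taking points just inside the three separating borders and reflecting them across the corresponding hyperplanes to exhibit either a non-convex block or a crossing of two borders of distinct axes. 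The two delicate points I anticipate are that, since $\perm{312}{213}$ is \emph{not} vincular, the three pairs are related only through transitive chains rather than by direct adjacency (so the reflection argument must first be reduced to suitable neighbouring blocks along these chains), and that the reasoning must be made uniform in $d$ rather than enumerating the growing list of admissible axis triples.
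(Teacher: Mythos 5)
Your reduction to the two representatives $\baxpa$ and $\perm{312}{213}$, and your treatment of $\baxpa$, follow essentially the paper's route: the two vincular adjacencies give two neighbouring pairs via Remark~\ref{rem:neighbor}, Remark~\ref{rem:diford} gives two distinct axes, and one concludes with a forbidden border crossing. The one step you elide there is why the two borders actually meet: in the paper this is forced by first peeling $\mP$ (from $q_l$, $\overline{q_l}$, $q_m$ and $\overline{q_m}$) down to a floorplan $\mathcal{P}'$ in which $A,D$ and $B,C$ each contain a pair of opposite corners, so that each of the two shared borders must slice $\mathcal{P}'$ entirely in two; two slicing borders of distinct axes necessarily cross. ``The positions interleave'' is not by itself a geometric statement, so you would need to supply this reduction, but that half of your plan is sound.

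The genuine gap is in the $\perm{312}{213}$ case, where your plan diverges from the paper's and, as stated, does not go through. The obstruction to this pattern is not local: the three blocks $A,B,C$ are related only through transitive chains of neighbour relations, they need not lie anywhere near one another, and there is no triple of borders in a common neighbourhood for a Lemma~\ref{lem:total-order}-style reflection/convexity argument to act on. The ``reduction to suitable neighbouring blocks along the chains'' that you defer is exactly the step I do not see how to carry out: collapsing a chain for one pair while preserving the other two direction relations is not something the block-deletion machinery gives you. The paper's argument is of a different nature. Reading off the orientation signatures (in the $d=4$ instance) gives $A\overset{t}{\twoheadleftarrow}B$, $A\overset{z}{\twoheadleftarrow}C$ and $B\overset{x}{\twoheadleftarrow}C$, and the contradiction comes from exhibiting a single \emph{non-canonical} corner $p^*=(1,0,0,1)$ whose peeling order is simultaneously compatible with $\overset{z}{\twoheadleftarrow}$, $\overset{x}{\twoheadrightarrow}$ and $\overset{t}{\twoheadrightarrow}$, so that the three relations become $A\twoheadiagarrow^{p^*}C\twoheadiagarrow^{p^*}B\twoheadiagarrow^{p^*}A$, a $3$-cycle in a total order. (The compatibility statements are established by peeling until each block contains a prescribed set of corners and invoking Remark~\ref{orderdel}, and the general-$d$ case additionally requires checking that the three fixed coordinate sets determined by the three blocks are disjoint, so that such a $p^*$ always exists.) This global ``choose a corner that orients all three relations cyclically'' idea is absent from your proposal, and without it, or a genuinely new lemma ruling out the pinwheel triple, the $\Sym(\perm{312}{213})$ half of the statement is not proved.
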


\begin{proof}
    Let us show by contradiction that any permutation containing one of the forbidden patterns of $F^{d-1}_n$ cannot be obtained from a $2^{d-1}-$floorplan. For each family of patterns, it suffices to prove this for one of their representative, the proofs of the other patterns follow by symmetry.

    Let $\mP$ be a $2^{d-1}-$floorplan with $n$ blocks and let us do the
    following suppositions:

    \begin{itemize}
        \item The $d$-permutation $\phi(\mathcal{P})$ contains \baxpa.
              This implies that there are four blocks $A,B,C,D$ in $\mP$ such
              that $$A \twoheadiagarrow^{q_l} B \twoheadiagarrow^{q_l}   C
                  \twoheadiagarrow^{q_l} D,$$  $$C \twoheadiagarrow^{q_m} A
                  \twoheadiagarrow^{q_m}   D \twoheadiagarrow^{q_m} B,$$
              for some canonical corner $q_l$ and $q_m$ ($l<m$). From the adjacency conditions of the patterns, one also has that $B$ and $C$ are $i$ neighbors and that $A$ and $D$ are $j$ neighbors by  Remark~\ref{rem:neighbor}. Between the pairs  $A-D$ and $B-C$, one has $$ A \twoheadiagarrow^{q_l}  D \quad \text{and} \quad  A \twoheadiagarrow^{q_m} D  \,,$$
              $$B \twoheadiagarrow^{q_l}   C  \quad \text{and} \quad  C \twoheadiagarrow^{q_m} B\,.$$.

              By Remark \ref{rem:diford}, one also has that $i$ and $j$ are not the same axis. Consider now the $2^{d-1}-$floorplan $\mathcal{P'}$ where:
              \begin{itemize}
                  \item All the blocks preceding $A$ in the peeling order with respect to $q_l$ have been removed by a sequence of block deletions using the corner $q_l$.
                  \item All the blocks coming after $D$ in this same peeling
                        order have been removed by a sequence of block deletion using
                        the opposite corner of $q_l$ in $\mP$.
                  \item All the blocks preceding $C$ in the peeling order with respect to $q_m$  have been removed by a sequence of block deletions using the corner $q_m$.
                  \item All the blocks coming after $B$ in this same peeling
                        order have been removed by a sequence of block deletion using
                        the opposite corner of $q_m$ in $\mP$.
              \end{itemize}

              The block $A$ thus contains the corner $q_l$ of $\mathcal{P}'$ and $D$ its opposite, similarly the block $C$ contains the corner $q_m$ and $B$ its opposite in $\mathcal{P}'$ (see Figure~\ref{fig:fp-patt1}). Recall also that $C$ and $B$ are $i$ neighbors, because of the previous point   there must be a border of type $i$ that slices $\mathcal{P}'$ in two parts. A similar argument can be applied to $A$ and $D$, thus in $\mathcal{P}'$ there must also be a border of type $j$ that cuts $\mathcal{P}'$ in two which is not possible as this leads to two intersecting borders. Since peeling operation can not create intersecting borders, $\mathcal{P}$ contains also intersecting borders, which leads to a contradiction.
              \begin{figure}[!htb]
                  \centering
                  \begin{minipage}{0.4\textwidth}\center{\resizebox{0.9\textwidth}{!}{\input{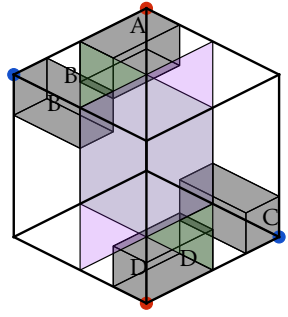}}}\end{minipage}
                  \caption{Floorplan configuration associated with \sbaxpa.}
                  \label{fig:fp-patt1}
              \end{figure}

        \item The $d$-permutation $\phi(\mathcal{P})$ contains $\perm{312}{213}$.
              There are thus three blocks $A,B,C$ such that $$A
                  \twoheadiagarrow^{q_k} B \twoheadiagarrow^{q_k}   C,$$  $$B
                  \twoheadiagarrow^{q_l} C \twoheadiagarrow^{q_l}   A,$$ $$B
                  \twoheadiagarrow^{q_m} A \twoheadiagarrow^{q_m}   C.$$ Without loss of generality we assume that $k=0$, $l=1$ and $m=2$.
              Let us start with an argument for a $4-$floorplan. First, remove all the blocks preceding $A$ and following $C$ in the peeling order with respect to  $q_0$, using the block deletion with respect to $q_0$ and its opposite corner in the floorplan. Do the same operation for blocks preceding $B$ and following $A$ in the peeling order with respect to $q_1$ (using now $q_1$ and its opposite corner for the block deletions). Do it also for the blocks preceding $B$ and following $C$ in the peeling order with respect to $q_2$ (using $q_2$ and its opposite corner). Call the resulting floorplan $\mathcal{P}'$.
              Now suppose one deletes the block $B$ and all the blocks
              preceding $A$ in the peeling order with respect to $q_2$ and call the resulting floorplan $\mathcal{P}''$. In
              $\mathcal{P}''$, the block $A$ contains thus the three corners
              $q_0$, $\overline{q}_1$ and $q_2$. This implies that in this floorplan
              $A$ must contain any corner whose position is of the form
              $p=(*,*,0,*)$ where the star symbol can either be a zero or a
              one. For any such corner one must have in $\mP$ (by remark
              ~\ref{orderdel}) that $A \twoheadiagarrow^{p} C$.
              Similarly one can perform the same deletion procedure for the
              block $A$ resp. $C$ and all the blocks coming after $C$ resp. $B$
              in the peeling orders with respect to $q_1$ resp. $q_0$ to find
              that for any corner of the form $p'=(1,*,*,*)$ resp.
              $p"=(*,*,*,1)$ one must have $C \twoheadiagarrow^{p'} B$ resp. $B
                  \twoheadiagarrow^{p"} A$ in $\mP$. Thus, for the corner $p^*= (1,0,0,1)$ (which fits the three types of
              corner described above) one must have $A\twoheadiagarrow^{p^*}
                  C \twoheadiagarrow^{p^*} B \twoheadiagarrow^{p^*} A$
              which leads to a contradiction and concludes the proof in that case.

              Let us comment on this argument. In dimension $4$, the coordinates of the corners that are contained by one of the blocks $A$, $B$ or $C$ is given by $3$ free coordinates and one fixed coordinates. The fixed coordinate is found by looking at the position of the three canonical corners ($q_1$ $q_2$ $q_3$ and their opposites $\bar{q_1}$ $\bar{q_2}$ $\bar{q_3}$) touched by the corresponding block. This coordinate is given by the one coordinate matching for the three touched corners. Additionally, from Definition~\ref{def:deletion_order}
              it follows
              that in any dimension and for three canonical corners, there are ${1/4}^{th}$ of the coordinates that match and the rest that differs. For example, in dimension $4$ and for the block A in the previous argument, the fixed coordinate was the $z$ coordinate which is the only one with the same value in the position of the three corners $q_1$, $\bar{q}_2$ and $q_3$.

              The previous argument can be generalised to higher dimensions but one cannot easily express the position of a corner where the contradictions occurs. However, it is possible to prove that there always exists such a corner. It suffices to show that the fixed coordinates of the position of the corners contained by a block that also contains the corners $\bar{q_0}$, $q_1$, $q_2$ resp. $q_0$, $\bar{q_1}$, $q_2$ resp.  $q_0$, $q_1$, $\bar{q_2}$ are all different. The position of the corners where the contradictions occurs is then given by fixing the coordinates to their values fixed in each cases and letting free the other ones. For any two corners $q_A$ and $q_B$, the pairs $q_1A- q_B$ and  $\bar{q_A} - q_B$ are contained in one of three sets (different between the two pairs). Thus, by definition of the canonical set of corners (Definition~\ref{def:deletion_order}) the fixed coordinates cannot overlap between the three sets of corners and there exist a corner $p^*$ such that
              $A\twoheadiagarrow^{p^*}
                  C \twoheadiagarrow^{p^*} B \twoheadiagarrow^{p^*} A$
              which leads to a contradiction and concludes the proof.
    \end{itemize}

\end{proof}

\begin{lem}
    The mapping $\phi$ from $2^{d-1}$-floorplans to $d$-permutations is injective.
    \label{lem:injection}
\end{lem}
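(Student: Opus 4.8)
The plan is to show that the canonical $d$-permutation $\phi(\mathcal{P})$ determines the equivalence class of $\mathcal{P}$, i.e.\ recovers all the direction relations $\overset{j}{\twoheadleftarrow}$ of the blocks. Since (as noted in the discussion following Definition~\ref{def:weakd}) a $2^{d-1}$-floorplan is completely determined up to equivalence by its family of direction relations $(\overset{1}{\twoheadleftarrow},\ldots,\overset{2^{d-1}}{\twoheadleftarrow})$, recovering these relations from $\phi(\mathcal{P})$ immediately yields injectivity.

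The key observation is that $\phi(\mathcal{P})$ encodes exactly the $2^{d-1}$ canonical peeling orders. Indeed, by construction (Definitions~\ref{def:FP2BP} and~\ref{def:deletion_order}), the permutation $\sigma_{c_0}$ records the peeling order $\twoheadiagarrow^{q_0}$ and each $\pi_i=\sigma_{c_i}\sigma_{c_0}^{-1}$ together with $\sigma_{c_0}$ recovers $\sigma_{c_i}$, hence the peeling order $\twoheadiagarrow^{q_i}$; the remaining canonical peeling orders (with respect to the opposite corners $\overline{q_i}$) are then determined, since the deletion order with respect to a corner also gives the deletion order with respect to its opposite corner (as remarked just before Definition~\ref{def:FP2BP}). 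Thus from $\phi(\mathcal{P})$ one reconstructs all $2^{d-1}$ canonical peeling orders $\twoheadiagarrow^{q}$ for $q$ ranging over the canonical corners and their opposites.

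First I would make this reconstruction explicit: given $\bpi=\phi(\mathcal{P})=(\pi_1,\ldots,\pi_{d-1})$, set $\sigma_{c_0}=\Id$ on the ground set indexed by axis $0$, and recover each $\sigma_{c_i}$ from $\pi_i$. This yields the partial orders $\twoheadiagarrow^{q_i}$ and, by opposition, the $\twoheadiagarrow^{\overline{q_i}}$. Then I would invoke Lemma~\ref{lem:recover}: each direction relation $\overset{j}{\twoheadleftarrow}$ (resp.\ $\overset{j}{\twoheadrightarrow}$) is the intersection of the $d-1$ canonical peeling orders compatible with it. Since all these peeling orders have just been recovered from $\bpi$, every direction relation $\overset{j}{\twoheadleftarrow}$ is determined by $\bpi$. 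Therefore the full family of direction relations of $\mathcal{P}$ is a function of $\phi(\mathcal{P})$ alone, so two floorplans with the same image under $\phi$ have identical direction relations and are hence equivalent, proving injectivity.

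The main obstacle is purely bookkeeping rather than conceptual: one must verify carefully that the canonical corner set (Definition~\ref{def:deletion_order}) is arranged so that, for each axis $j$, the $d-1$ canonical peeling orders compatible with $\overset{j}{\twoheadleftarrow}$ are all genuinely recoverable from $\bpi$ (i.e.\ that each such peeling order is one of the $\twoheadiagarrow^{q_i}$ or $\twoheadiagarrow^{\overline{q_i}}$). This is exactly the combinatorial content guaranteeing that Lemma~\ref{lem:recover} can be applied using only the information stored in $\bpi$; once that compatibility is checked against the alternating packets-of-$2^{d-1-i}$ structure of the canonical corners, the argument is complete.
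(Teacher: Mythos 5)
Your proposal is correct and follows essentially the same route as the paper: the paper's own proof is a two-line appeal to Lemma~\ref{lem:recover} (direction relations are intersections of the compatible canonical peeling orders) plus the fact that a $2^{d-1}$-floorplan is determined up to equivalence by its direction relations. You simply spell out the reconstruction in the contrapositive-free direction ($\bpi$ determines the peeling orders, which determine the direction relations), which is the same argument made more explicit.
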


\begin{proof}

    This is a direct consequence of Lemmas \ref{lem:cover} and \ref{lem:recover}. Two $2^{d-1}$-floorplans do not have the same direction relations of their blocks. Thus they also do not have the same peeling orders, which implies that no two $2^{d-1}$-floorplans produce the same $d-$permutation.
\end{proof}

In the mapping $\phi$ from $2^{d-1}$-floorplans to $d$-permutations, we associate each block of a $2^{d-1}$-floorplan to a point of a $d-$permutation. Given a block $A$, each label of this block in a canonical peeling order gives a coordinate of the corresponding point $p_A$ in the $d-$permutation. Here, this association is chosen such that the peeling order of the blocks  with respect to the canonical corner $q_i$ gives the $x_i$ coordinates of the $d-$permutation points. This association between peeling orders and coordinates of points also corresponds to an \emph{axis-direction association}.

Given two points $p_A$ and $p_B$ of a $d-$permutation $\bpi=(\pi_1, \ldots,\pi_d)$. The relation $\pi_i(p_A) < \pi_i(p_B)$ can be written as the union of $2^{d-1}$ partial order relations of these points (defined by the different possible directions between two points; this is a direct consequence of the definition of the partial orders). This construction is similar to the one between the canonical peeling orders and the direction relations of the blocks of $2^{d-1}$-floorplans. To any axis $j$, there is a corresponding positive direction ${\bf dir}$ such that, for any $2^{d-1}$-floorplan $\mathcal{P}$, the partial order $\overset{j}{\twoheadleftarrow}$ on the blocks of $\mathcal{P}$ is the same as the partial order $<_{\bf dir}$ on the points of $\phi(\mathcal{P})$. We say that $j$ is the \emph{associated axis} of ${\bf dir}$ which we denote by ${ ax}({\bf dir})$. 

Given a positive direction ${\bf dir}:=(\text{dir}_0,\ldots,\text{dir}_{d-1})$ and a canonical corner $q_i$, we call its \emph{signed canonical corner} the corner $q_i^{\text{dir}_i}$ such that $q_i^{+1}=q_i$ and  $q_i^{-1}=\overline{q_i}$. The associated axis {ax}$({\bf dir})$ is defined by the condition  
\begin{equation*}
\overset{{ ax}({\bf dir})}{\twoheadleftarrow}=  \twoheadiagarrow^{q_0^{\text{dir}_0}}  \cap \ldots \cap \twoheadiagarrow^{q_{d-1}^{\text{dir}_{d-1}}} \,.
\end{equation*}
As explained above, this axis is defined such that if $A\overset{{ ax}({\bf dir})}{\twoheadleftarrow} B$ in a $2^{d-1}$-floorplan $\mP$, one has $p_A <_{\bf dir} p_B$ in $\phi(\mP)$.

Using these associated axes, We can define an algorithm that realises a $2^{d-1}$-floorplan $\mathcal{P}(\psi(\bpi))$, from the set of $2^{d-1}$ partial orders $\psi(\bpi)$ obtained from a $d$-permutation $\bpi \in F^{d-1}_n$ (Alg.~\ref{alg:BP2FP}). This algorithm is a
direct generalisation of Algorithm BP2FP in~\cite{ackerman2006bijection}. We denote here, in the $d$-permutation $\bpi$, the point whose first coordinate is equal to $i$ as $p_i$.

\begin{algorithm}
    \caption{$d$-perm2FP \\ \textbf{Input}: A $d$-permutation $\bpi$ that
        belongs to $F^{d-1}_n$   \\ \textbf{Output}: A $2^{d-1}-$floorplan with $n$
        blocks.}  \label{alg:cap}
    \begin{algorithmic}[1]
        \State Setup a list \textit{saillant}[$j$] \textbf{for} each axis $j$;
        \State Setup a list \textit{blocks to push};
        \State Create a block of length $n$ in each direction, name it $1$ and add it to \textit{saillant}[$j$] for all axes $j$;
        \For{$i=2,\ldots n$}
        \State $f= \textbf{dir}\left(p_{i-1}, p_i \right)$;
        \For{$k \in \textit{saillant}[{\bf ax}(f)]$}
        \If{$ \textbf{dir}\left(p_k, p_i \right)=f$}
        \State Add the block $k$ to \textit{blocks to push};
        \EndIf
        \EndFor
        \State Create a new block called $i$ from the corner $\bar{q}_0$ by pushing for $n-i$ units along the axis {\bf ax}(f) all the blocks in \textit{blocks to push};
        \State Remove all the blocks in \textit{blocks to push} from \textit{saillant}[{\bf ax}(f)];
        \State Remove all blocks from \textit{blocks to push};
        \State Add $i$ to \textit{saillant}[j] \textbf{for} each axis $j$;
        \EndFor
    \end{algorithmic}
    \label{alg:BP2FP}
\end{algorithm}

\begin{figure}[!htb]
    \center{\minipdf{0.2}{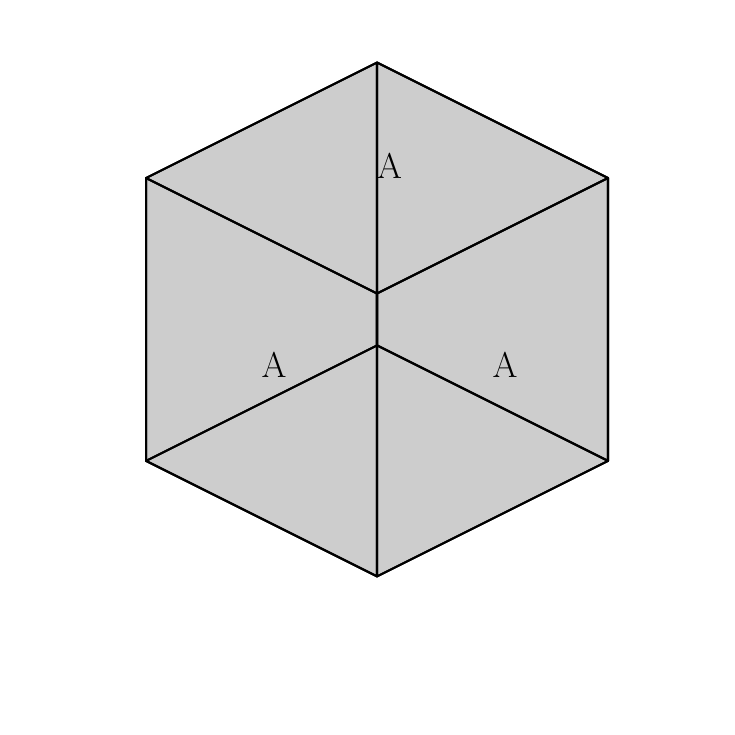}\minipdf{0.2}{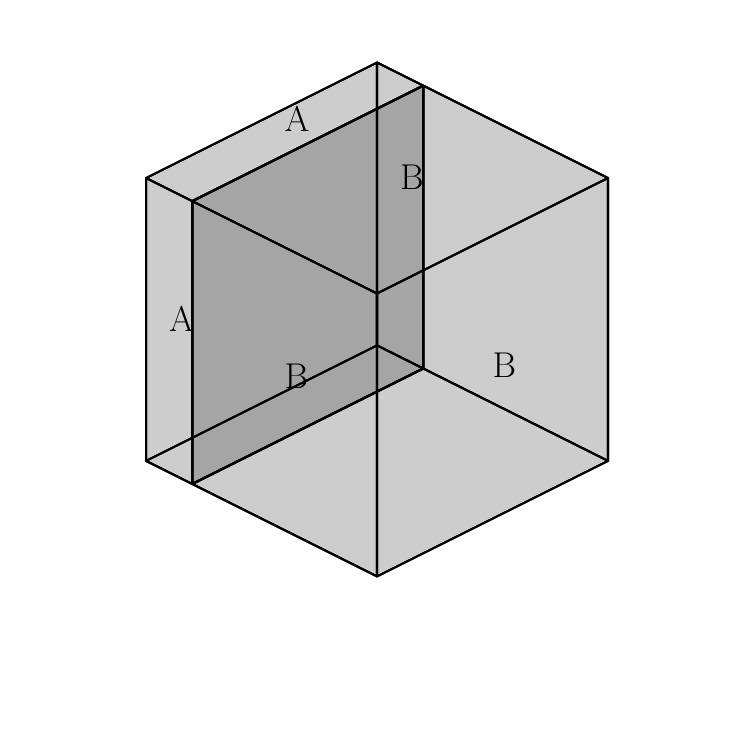}\minipdf{0.2}{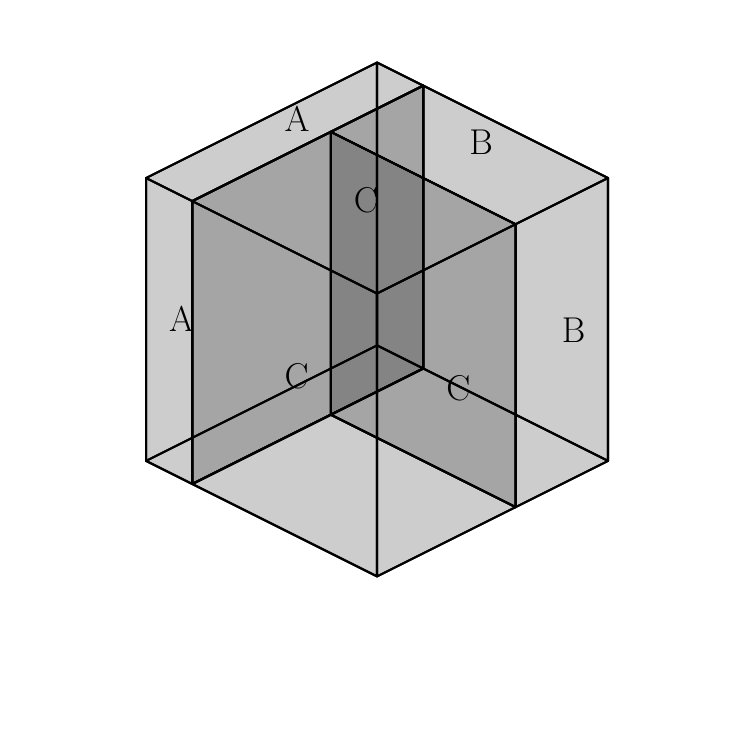}\minipdf{0.2}{figs/fp-prune-4.pdf}\minipdf{0.2}{figs/fp-prune-5.pdf}}
    \caption{The execution of Algorithm~\ref{alg:BP2FP} of the 3-permutation $\bpi=\perm{12435}{34125}$.}
    \label{fig:BP2FP}
\end{figure}

\begin{lem}
    Let ${\bf \pi}$ be a $d$-permutation in $F^{d-1}_n$. The object $\mP({\bf \pi})$ obtained by applying the algorithm $d$-perm2FP on ${\bf \pi}$  is a $2^{d-1}$-floorplan with $n$ blocks such that 
    any block in the output $2^{d-1}$-floorplan corresponds to a point $p_A$ in the input $d-$permutation.
    
    Additionally, for two blocks $A$ and $B$  in $\mP({\bf \pi})$, corresponding to two points $p_A$ and $p_B$ such that ${\bf dir}(p_A,p_B)$ is a positive direction, one has $A  \overset{{ax}({\bf dir}(p_A,p_B))}{\twoheadleftarrow} B$.
    \label{lem:dir}
\end{lem}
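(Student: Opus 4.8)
The plan is to run an induction on the loop index $i$ of Algorithm~\ref{alg:BP2FP}, following the intermediate object $\mP_i$ produced once the block named $i$ has been created. Since the algorithm creates exactly one block per point, the correspondence between blocks of the output and points of $\bpi$ is built in, so the only real content is that each $\mP_i$ is a valid $2^{d-1}$-floorplan and that its direction relations agree with the point directions. I would carry three invariants: (I1) $\mP_i$ is a $2^{d-1}$-floorplan with $i$ blocks whose maximal block (the one incident to $q_{\max}$) is $i$; (I2) \textit{saillant}$[j]$ is exactly the set of blocks of $\mP_i$ whose upper facet of axis $j$ lies on the maximal facet of axis $j$ of the bounding box; and (I3) for all blocks $A,B$ of $\mP_i$ with ${\bf dir}(p_A,p_B)$ positive, $A\overset{{ax}({\bf dir}(p_A,p_B))}{\twoheadleftarrow}B$. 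The base case $i=1$ is immediate: one block fills the cube, every \textit{saillant}$[j]$ contains it, and there is no constrained pair.

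For the inductive step set $f={\bf dir}(p_{i-1},p_i)$. Because points are processed in increasing order of their first coordinate (the peeling order with respect to $q_0$), $f$ is positive and ${ax}(f)$ is defined. The crucial dictionary is the signed-canonical-corner identity $\overset{{ax}(g)}{\twoheadleftarrow}=\twoheadiagarrow^{q_0^{g_0}}\cap\cdots\cap\twoheadiagarrow^{q_{d-1}^{g_{d-1}}}$ recorded just before the statement: combined with Lemma~\ref{lem:recover} it shows that the combinatorial test ${\bf dir}(p_k,p_i)=f$ used in the inner loop is exactly the relation $k\overset{{ax}(f)}{\twoheadleftarrow}i$ we want block $i$ to have. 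I would then prove that the blocks selected by the loop --- those of \textit{saillant}$[{ax}(f)]$ passing this test --- have upper facets of axis ${ax}(f)$ whose union is a single facet containing $q_{\max}$, i.e. a genuine pushable facet in the sense of Section~\ref{sec:gen_tree}. Granting this, the body of the loop is literally a block insertion with respect to $q_{\max}$, the inverse of the block deletion of Lemma~\ref{lem:unique-shift}, so (I1) is preserved.

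For (I3) I would check the pairs involving the new block $i$; pairs of old blocks are untouched because a block insertion, being the inverse of a block deletion, leaves the $\overset{j}{\twoheadleftarrow}$ relations of the other blocks unchanged (Remark~\ref{orderdel}). Each pushed block $k$ shares the shifted pushable facet with $i$, hence is an ${ax}(f)$-neighbor of $i$ and satisfies $k\overset{{ax}(f)}{\twoheadleftarrow}i$, matching ${\bf dir}(p_k,p_i)=f$. For an arbitrary old block $B$ with ${\bf dir}(p_B,p_i)=g$ positive, I would obtain $B\overset{{ax}(g)}{\twoheadleftarrow}i$ by combining the neighbor relations just established with the induction hypothesis for $\mP_{i-1}$ and the intersection characterization of Lemma~\ref{lem:recover}: once $i$ sits above exactly the pushed blocks along axis ${ax}(f)$ and spans their footprint in the remaining axes, its comparability with $B$ in each canonical peeling order is forced and matches the coordinatewise signs recorded by $g$. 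Invariant (I2) is maintained by the bookkeeping of the algorithm, since the pushed blocks are now covered by $i$ along ${ax}(f)$ and are removed from \textit{saillant}$[{ax}(f)]$, while $i$, touching $q_{\max}$, is added to every \textit{saillant}$[j]$.

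The step I expect to be hardest is exactly the geometric claim of the second paragraph: that the selected blocks assemble into one pushable facet and that the insertion creates no border crossing. This is where membership of $\bpi$ in $F^{d-1}_n$ is indispensable, mirroring Lemma~\ref{lem:FP2BP} in the opposite direction. I would argue by contradiction: were an unpushed block $C$ wedged between two pushed blocks in the facet (so that ${\bf dir}(p_C,p_i)$ agrees with $f$ in coordinate $0$ but disagrees in at least one other), then reading off the three relevant canonical peeling orders for $C$ and its two flanking pushed blocks would exhibit an occurrence of a representative of $\Sym(\perm{312}{213})$; and were the insertion to force two borders of distinct axes to meet in a crossing, the four blocks realizing it would, just as in Lemma~\ref{lem:FP2BP}, yield an occurrence of $\sbaxpa$, the adjacency of the pushed and covered blocks supplying the vincular constraints of the pattern. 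Excluding both configurations is the technical core, and once it is done the remaining verifications of (I1)--(I3) are routine.
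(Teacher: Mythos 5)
Your proposal follows essentially the same route as the paper's proof: an induction along the algorithm's insertion steps maintaining that the intermediate object is a $2^{d-1}$-floorplan with correct \textit{saillant} lists and direction relations, with the technical core being that the blocks selected by the inner loop form a single pushable facet, the bad configurations being excluded by occurrences of $\Sym(\perm{312}{213})$ and $\sbaxpa$ exactly as in the paper. The one step you underestimate is the verification that an old block $B$ with ${\bf dir}(p_B,p_i)\neq f$ receives the correct direction relation to the new block $i$: this is not merely ``forced'' by the geometry but requires in the paper a further pattern-avoidance contradiction (an occurrence of $\Sym(\perm{312}{213})$ built from $p_B$, $p_D$ and $p_i$, where $D$ is the block generating the pushable facet).
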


\begin{proof}
    We prove this by induction on the inserted blocks. Let $\mP_{k}$ be the object obtained after the $(k)^{th}$ step of algorithm $d$-perm2FP. We suppose that: \begin{itemize}
        \item  $\mP_k$ is a $2^{d-1}$-floorplan obtained by performing a block insertion in $\mP_{k-1}$.
        \item  {\it saillant}[l] corresponds to the list of blocks in $\mP_k$ with a facet lying in the upper boundary of axis $l$ that we call $f_l$.
        \item The axis $j$ used to perform the block insertion in $\mP_{k-1}$ is defined as ${ ax}({\bf dir}(p_{k-1},p_k))$
        \item Let $D$ be the block named $i_{min}$, for which $i_{min}$ is the smallest $i$ in {\it saillant}$[j]$ such that $\textbf{dir}\left(p_{i}, p_k \right)= \textbf{dir}\left(p_{k-1}, p_k \right)$. This block generates a pushable facet $f_{push}$ in $\mP_{k-1}$. This pushable facet is the one used to perform the block insertion of the block named $k$.
        \item For two blocks $i$ and $m$ in $\mP_k$, one has $i  \overset{{ax}({\bf dir}(p_i,p_m))}{\twoheadleftarrow} m$.
    \end{itemize}
    
    At step $k=2$, this is clearly the case. Let us now prove that this is true at the step $k+1$. This amounts to prove that:

    \begin{itemize}
        \item The list {\it blocks to push} corresponds to the list of blocks that have a facet lying in $f_{push}$. This implies that, pushing the blocks in {\it blocks to push} from the boundary of $\mP_{k}$, and drawing $k+1$ in the space created, is equivalent to performing a block insertion using $f_{push}$.   
        \item At the end of the step $k+1$, the list {\it saillant}[l] corresponds to the list of blocks in $\mP_{k+1}$ with a facet lying in the upper boundary of axis $l$.
        \item For any block $w<k+1$, one has $w  \overset{{ ax}({\bf dir}(p_i,p_k))}{\twoheadleftarrow} k$. 
    \end{itemize}

    Let us denote ${ ax}({\bf dir}(p_{k},p_k+1))$ as $j$ and the upper boundary of axis $j$ in $\mP_{k}$ as $f_j$.  Let also $\pi_{v}(p_L)$ be the coordinate $v$ of the point $p_L$ in ${\bf \pi}$,
    
    {\bf Proof of the first item:}
    In order to prove the first point, we first prove that the list {\it blocks to push} corresponds to the end of the list {\it saillant}$[j]$, whose first element is $D$. In other words, we prove that any block $C$ in {\it saillant}$[j]$ with  $i_{min}<C$ is in {\it blocks to push}.
    
    We prove this by contradiction, suppose this is not the case, there are then blocks in {\it saillant}$[j]$ called $C_1 C_2 \ldots C_q C$ such that $i_{min} < C_1 <\ldots C_q <C$ ($q\geq 0$) such that $\textbf{dir}(p_{C_1},p_{k+1})\neq \textbf{dir}(p_{k},p_{k+1})$ \ldots $\textbf{dir}(p_{C},p_{k+1})\neq \textbf{dir}(p_{k},p_{k+1})$.
    Let us consider the block  $C+1$, there are two possibilities, either $C+1$ is in {\it saillant}$[j]$ or not. If it is one has $\textbf{dir}(p_{D},p_{k+1})=\textbf{dir}(p_{C+1},p_{k+1})$.  If it is not, let $B$ be a block in {\it saillant}$[j]$ that is at the end of the chain of blocks that links $C+1$ and $f_j$ (i.e a block whose facet is contained in $f_j$ such that $C+1\overset{j}{\twoheadleftarrow}B$). 
    
    Let $p_{k+1}$, $p_B$,$p_{c+1}$, $p_C$ and $p_D$ be the points of ${\bf \pi}$ associated to the block $k+1,\,B,\,C+1,\,C$ and $D$. Since $B$ and $D$ are in {\it blocks to push} ($B$ is in this list because $C<B$), one has $\textbf{dir}(p_{B},p_{k+1})= \textbf{dir}(p_{D},p_{k+1})$. Additionally, since $B\overset{j}{\twoheadleftarrow}C+1$ one has $\textbf{dir}(p_{C+1},p_{B})=\textbf{dir}(p_{B},p_{k+1})$ which implies by transitivity $\textbf{dir}(p_{C+1},p_{k+1})= \textbf{dir}(p_{B},p_{k+1})$. One thus has $\textbf{dir}(p_{D},p_{k+1})=\textbf{dir}(p_{C+1},p_{k+1})$. Finally, since $D$ was in {\it saillant}$[j]$ at both steps $C+1$ and $k+1$, one has $\textbf{dir}(p_{D},p_{k+1}) \neq \textbf{dir}(p_{D},p_{C+1})$. By Remark \ref{rem:diford}, the previous statements translate as:
    \begin{align*}
        &\pi_{0}(p_D) < \; \pi_{0}(p_C) < \: \pi_{0}(p_{C+1}) < \: \pi_{0}(p_{k+1}) ,
        \end{align*}
        \begin{align*}
         \pi_{l}(p_{C+1}) < \;  \pi_{l}(p_D) <  \; &\pi_{l}(p_{k+1}) \quad \text{ or } \quad  \pi_{l}(p_{k+1}) < \; \pi_{l}(p_D) < \; \pi_{l}(p_{C+1}) \,\text{ for at least one $l$},
     \end{align*}
      \begin{align*}   
         \pi_{m}(p_D) < \;  \pi_{m}(p_{C+1}) < \;  \pi_{m}(p_{k+1}) \quad &\text{ or } \quad  \pi_{m}(p_{C+1}) < \;  \pi_{m}(p_D) < \;  \pi_{m}(p_{k+1})  \\ & \text{ or } \qquad \qquad \qquad  \qquad  \qquad \qquad \qquad \text{ for all $m\neq l \neq0$.}\\
         \pi_{m}(p_{k+1}) < \; \pi_{m}(p_{C+1}) < \; \pi_{m}(p_D) \quad &\text{ or } \quad \pi_{m}(p_{k+1}) < \; \pi_{m}(p_D) < \:  \pi_{m}(p_{C+1}) \,
    \end{align*}
    Here, one additionally has $\pi_{0}(p_D)=i_{min}$, $\pi_{0}(p_C)=C$, $\pi_{0}(p_{C+1})=C+1$ and $\pi_{0}(p_{k+1})=k+1$. Since $C$ is not in {\it blocks to push}, one has that   $\textbf{dir}(p_{C},p_{k+1}) \neq \textbf{dir}(p_{B},p_{k+1}) = \textbf{dir}(p_{D},p_{k+1})$. By Remark \ref{rem:diford},  There must thus be at least one coordinate $v$ such that  
        \begin{align*}
         \pi_{v}(p_{C+1}),\pi_{v}(p_D) < \;  \pi_{v}(p_{k+1}) <  \; &\pi_{v}(p_C) \quad \text{ or } \quad \pi_{v}(p_C) < \; \pi_{v}(p_{k+1}) < \;  \pi_{v}(p_{C+1}),\pi_{v}(p_D) \,.
     \end{align*}
    If $v=l$, one has 
    \begin{align*}
        \pi_{0}(p_D) < \; \pi_{0}(p_C) < \,& \pi_{0}(p_{C+1}) < \: \pi_{0}(p_{k+1}) , \\
        \pi_{l}(p_{C+1}) < \;  \pi_{l}(p_D) <  \; \pi_{l}(p_{k+1})  <  \; \pi_{l}(p_C) \quad &\text{ or } \quad    \pi_{l}(p_C) < \; \pi_{l}(p_{k+1}) < \; \pi_{l}(p_D) < \; \pi_{l}(p_{C+1})\,,
        \end{align*}
    which are occurrences of the forbidden patterns in  $\vinpat{2413}{2}$. As proven in \cite{bonichon2022baxter}, any occurrence of $\vinpat{2413}{2}$ is also an occurrence of $\sbaxpa$. In the $2$-dimensional case, this is the only possibility and any block in {\it saillant}$[j]$ generates a pushable facet. In this case the proof of the first point is thus completed. In arbitrary dimensions, one has to consider more configurations.
    
    If $v \neq l$  there are then six possibilities for $\pi_l$:
    \begin{align*}
        \text{1.} \quad  \pi_{l}(p_{C+1}) < \;  \pi_{l}(p_D) <  \; \pi_{l}(p_C) <  \;\pi_{l}(p_{k+1}) \quad &\text{ or } \quad  \text{2.} \quad \pi_{l}(p_{C+1}) <  \; \pi_{l}(p_C) < \;  \pi_{l}(p_D)  <  \;\pi_{l}(p_{k+1}) \\ 
        \text{3.} \quad \pi_{l}(p_C) < \; \pi_{l}(p_{C+1}) <  \;   \pi_{l}(p_D)  <  \;\pi_{l}(p_{k+1}) \quad &\text{ or } \quad \text{4.} \quad \pi_{l}(p_{k+1}) < \; \pi_{l}(p_C) < \; \pi_{l}(p_D) < \; \pi_{l}(p_{C+1}) \\ 
        \text{5.} \quad \pi_{l}(p_{k+1})  < \; \pi_{l}(p_D) < \; \pi_{l}(p_C) < \; \pi_{l}(p_{C+1}) \quad &\text{ or } \quad   \text{6.} \quad \pi_{l}(p_{k+1}) < \; \pi_{l}(p_D) < \; \pi_{l}(p_{C+1})  < \; \pi_{l}(p_C)\,.
    \end{align*}
    Additionally, there are two possibilities for $\pi_v$:
    \begin{align*}
        \text{a.} \quad  \pi_{v}(p_D) < \;  \pi_{v}(p_{C+1}) <  \; \pi_{v}(p_{k+1}) <  \;\pi_{v}(p_C)  &, \\   \text{b.} \quad \pi_{v}(p_C) <  \; \pi_{v}(p_{k+1}) < \;  \pi_{v}(p_{C+1})  <  \;\pi_{v}(p_D) \,&.
    \end{align*}
    For the configurations: $1. a.$, $1. b.$, $2. a.$, $2. b.$, $4. a.$, $4. b.$, $5. a.$, $5. b.$ ; the configuration of the points $p_C,p_{C+1}$ and $p_{k+1}$ corresponds to occurrences of forbidden patterns in $\Sym{\perm{312}{213})}$. For the other configurations, the points   $p_C,p_D$ and $p_{k+1}$ leads also to occurrence of the same patterns. This proves that any block $C$ in {\it saillant}$[j]$ with $i_{min}<C$ is in {\it blocks to push}.
    
    To complete the first point, it thus remain to prove that a corner of $D$ generates the pushable facet  $f_{push}$ and that the list {\it blocks to push} corresponds to the blocks in $\mP_k$ with a facet contained in $f_{push}$. 
    
    At step $i_{min}$, the minimal corner of $D$ generates $f_{push}$. Let us prove that this is also the case at step $k+1$. The only possibility for this to not be the case is that, at some step $E$, the list {\it blocks to push} corresponds to a pushable facet whose generating corner shadows the minimal corner of $D$. Again, we prove that this is not possible by contradiction. 
    
    Since $D<E$, as for $C+1$ before, it is either in {\it saillant}$[j]$ at step $k+1$ or not. In both cases one has $\textbf{dir}(p_{D},p_{k+1})= \textbf{dir}(p_{E},p_{k+1})$. As for $C+1$ and $D$ before, one must have $\textbf{dir}(p_{D},p_{k+1})\neq  \textbf{dir}(p_{D},p_{E})$. These statements leads to the conditions:
    \begin{align*}
        & \pi_{0}(p_D)  < \; \pi_{0}(p_E) < \: \pi_{0}(p_{k+1}) ,
        \end{align*}
        \begin{align*}
         \pi_{l}(p_E) < \;  \pi_{l}(p_D) <  \; &\pi_{l}(p_{k+1}) \quad \text{ or } \quad  \pi_{l}(p_{k+1}) < \; \pi_{l}(p_D) < \; \pi_{l}(p_E) \,\text{ for at least one $l$},
     \end{align*}
      \begin{align*}   
         \pi_{m}(p_D) < \;  \pi_{m}(p_E) < \;  \pi_{m}(p_{k+1}) \quad &\text{ or } \quad  \pi_{m}(p_E) < \;  \pi_{m}(p_D) < \;  \pi_{m}(p_{k+1})  \\ & \text{ or } \qquad \qquad \qquad  \qquad  \qquad \qquad \qquad \text{ for all $j\neq i \neq0$.}\\
         \pi_{m}(p_{k+1}) < \; \pi_{m}(p_E) < \; \pi_{m}(p_D) \quad &\text{ or } \quad \pi_{m}(p_{k+1}) < \; \pi_{m}(p_D) < \:  \pi_{m}(p_E) \,
    \end{align*}

    Because at step $E$ we insert a block using a generating corner that shadows the minimal corner of $D$, there must be a block $F<D$ in {\it saillant}$[j]$, such that $\textbf{dir}(p_{F},p_{D}) \neq \textbf{dir}(p_{F},p_{E})$. Because $F<D$, the block $F$ is not in {\it blocks to push}, thus $\textbf{dir}(p_{F},p_{k+1}) \neq \textbf{dir}(p_{D},p_{k+1})$. This implies that for a coordinate $l$ and a coordinate $v$
    \begin{align*}
        & \pi_{0}(p_F)  < \; \pi_{0}(p_D)  < \; \pi_{0}(p_E) < \: \pi_{0}(p_{k+1}) ,
        \end{align*}
    \begin{align*}   
        \text{1.} \quad \pi_{l}(p_D) < \; \pi_{l}(p_F) < \; \pi_{l}(p_E) < \;  \pi_{l}(p_{k+1}) \quad &\text{ or } \quad \text{2.} \quad \pi_{l}(p_E) < \;\pi_{l}(p_F) < \;  \pi_{l}(p_D) < \;  \pi_{l}(p_{k+1}) \\
        \text{3.} \quad \pi_{l}(p_{k+1}) < \; \pi_{l}(p_E) <\pi_{l}(p_F) < \; \; \pi_{l}(p_D) \quad &\text{ or } \quad \text{4.} \quad\pi_{l}(p_{k+1}) < \; \pi_{l}(p_D) < \:\pi_{l}(p_F) < \;  \pi_{l}(p_E) \,
    \end{align*}
    
    \begin{align*}   
        \text{a.} \quad \pi_{v}(p_D) < \; \pi_{v}(p_E) < \;  \pi_{v}(p_{k+1}) < \; \pi_{v}(p_F)  \quad &\text{ or } \quad  \text{b.} \quad\pi_{v}(p_E)  < \;  \pi_{v}(p_D) < \;  \pi_{v}(p_{k+1}) < \; \pi_{v}(p_F)\\ \text{c.} \quad
        \pi_{v}(p_F) < \; \pi_{v}(p_{k+1}) < \; \pi_{v}(p_E) < \; \pi_{v}(p_D) \quad &\text{ or } \quad \text{d.} \quad \pi_{v}(p_F) < \; \pi_{v}(p_{k+1}) < \; \pi_{k}(p_D) < \;  \pi_{k}(p_E) \,
    \end{align*}
    For the configurations $1.a.$, $1.b.$, $1. c.$, $1. d.$, $3. a.$, $3. b.$, $3. c.$, $3. d.$, the points $p_D$, $p_F$, $p_{k+1}$ are occurrences of forbidden patterns in $\Sym{\perm{312}{213})}$. Similarly for the other configurations, the points  $p_E$, $p_F$, $p_{k+1}$ are also occurrences of patterns in $\Sym{\perm{312}{213})}$. It proves that at step $k+1$ the minimal corner of $D$ generates $f_{push}$.
    
    From the discussion above, the blocks in {\it blocks to push} are the end of the list {\it saillant}$[j]$, starting at $D$. Additionally, at each step $i_{min}<l<k+1$, it has been proven that there was no block insertion performed by the algorithm, that uses a generating corner which shadows the minimal corner of $D$. Thus at each of these steps, the upper facet of axis $j$ of the block inserted lies within $f_{push}$. All the block in {\it blocks to push} have thus their minimal facet of axis $j$ contained in $f_{push}$. Finally,  for a block facet to be included in $f_{push}$ it must be a facet of a block $D'$ with $D<D'$ (a block facet can be contained in a pushable facet if and only if it is inserted after the block whose minimal corner generates the pushable facet). Thus there cannot be a block with a facet included $f_{push}$ that is not in {\it blocks to push}. 
    
    This concludes the proof of the first item.
    
    {\bf Proof of the second item:}
     At the beginning of the step $k+1$, for any axis $l$, {\it saillant}$[l]$ corresponds  to the list of blocks in $\mP_k$ with a facet lying in the upper boundary of axis $l$. At the end  of the step $k+1$, the algorithm removes from  {\it saillant}$[j]$ all the blocks in {\it blocks to push}. These blocks correspond to the blocks pushed when $k+1$ is inserted in $\mP_{k}$. It then adds in  all {\it saillant}$[l]$ the block $k+1$. Thus, these updates in {\it saillant}$[l]$, for any axis $l$, correspond exactly to the updates of the list of the blocks with a facet lying in the upper boundary of axis $l$, after the block insertion of $k+1$ in $\mP_k$.

    {\bf Proof of the third item:}
    Let us now consider a block $A$ such that $A<k+1$. There are two possibilities:
    \begin{itemize}
        \item $ {\bf dir}(p_{A},p_{k+1}))=  {\bf dir}(p_{k},p_{k+1}))$ and $A$ is in  {\it saillant}$[j]$
        \item $ {\bf dir}(p_{A},p_{k+1}))=  {\bf dir}(p_{k},p_{k+1}))$ but $A$ is not in {\it saillant}$[j]$
        \item  $ {\bf dir}(p_{A},p_{k+1})) \neq  {\bf dir}(p_{k},p_{k+1}))$
    \end{itemize}

    In the first case, it is clear that $A \overset{{ ax}({\bf dir}(p_{A},p_{k+1}))}{\twoheadleftarrow} k+1  $. 
    
    In the second case, there is a block $C$ such that $A<C<k+1$, $\textbf{dir}(p_C,p_{k+1})=\textbf{dir}(p_k,p_{k+1})$ and $\textbf{dir}(p_A,p_{C})=\textbf{dir}(p_k,p_{k+1})$. From the first case and by the induction hypothesis one has  $A \overset{{ ax}({\bf dir}(p_{k},p_{k+1}))}{\twoheadleftarrow} C  \overset{{ ax}({\bf dir}(p_{k},p_{k+1}))}{\twoheadleftarrow} k+1$. By transitivity, one thus has $A \overset{{ ax}({\bf dir}(p_{k},p_{k+1}))}{\twoheadleftarrow} k+1$ which is equivalent to $A \overset{{ ax}({\bf dir}(p_{A},p_{k+1}))}{\twoheadleftarrow} k+1$. 
    
    In the third case, let us reconsider the block $D$. For any block $C$ such that $C<D<k+1$ and ${\bf dir}(p_C,p_{k+1}) \neq {\bf dir}(p_k,p_{k+1})$, one has by the induction hypothesis and by the definition of $D$ (it is the block that generates the pushable facet $f_{push}$) that $A \overset{{ ax}({\bf dir}(p_{A},p_{D}))}{\twoheadleftarrow} D$ and $A \overset{{ ax}({\bf dir}(p_{A},p_{D}))}{\twoheadleftarrow} k+1$. One must thus show that $\textbf{dir}(p_A,p_{D}) = \textbf{dir}(p_A,p_{k+1})$ if $\textbf{dir}(p_A,p_{k+1}) \neq \textbf{dir}(p_D,p_{k+1})$. Suppose this is not the case, one must then have in the permutation $\bpi$ the following conditions for at least one $l$ and one $m$:
    \begin{align*}
        \pi_0(p_A) <                        & \pi_0(p_D) < \pi_0(p_{k+1}) \;,                         \\
        \pi_l(p_A) <\pi_i(p_{k+1}) < \pi_i(p_D) & \text{ or } \pi_i(p_D) <\pi_i(p_{k+1}) < \pi_i(p_A) \;, \\
        \pi_m(p_{k+1}) <\pi_i(p_A) < \pi_i(p_D)  & \text{ or } \pi_i(p_{D}) <\pi_i(p_A) < \pi_i(p_{k+1}) \;.
    \end{align*}

    These are occurrences of the patterns $ \perm{132}{213}$,$ \perm{132}{231}$,$ \perm{312}{213}$ and $ \perm{312}{231}$ which belong to the forbidden patterns in  Sym$(\perm{312}{213})$.
    
    This concludes the proof of the third item.
\end{proof}

\begin{myproof}{theorem}{\ref{th:final}}
      Using Lemma \ref{lem:dir}, for any $\bpi \in F^{d-1}_n$, the output of algorithm $d$-perm2FP is a $2^{d-1}$-floorplan that we call $\mathcal{P}(\psi(\bpi))$. Using Lemmas~\ref{lem:FP2BP} and \ref{lem:injection} , the mapping $\phi$ is injective and maps $2^{d-1}$-floorplans to $d-$permutations in $F^{d-1}_n$, The bijection is thus proven by showing that $\phi(\mathcal{P}(\psi(\bpi)) = \bpi$.
    
    Let $\bpi=(\pi_1, \ldots, \pi_{d-1})$ be a $d$-permutation
    in $F^{d-1}_n$. By Lemma \ref{lem:dir}, one has that the direction relation of any two blocks $A$ and $B$ in $\mathcal{P}(\psi(\bpi))$ is given by the associated axis of the direction of their corresponding points in $\bpi$. Recall that in $\mathcal{P}(\psi(\bpi))$, the blocks are labeled by the first coordinate of their corresponding point in $\bpi$.  By the definition of the associated axes and of Algorithm \ref{alg:BP2FP}, one has that:
    \begin{itemize}
        \item The peeling order with respect to the canonical corner $q_0$ is given by $1, 2, \ldots, n$.
        \item The peeling orders of the blocks with respect to the canonical corner $q_i$ (for $d-1>i\geq1$) in $\mathcal{P}(\psi(\bpi))$ is the total order defined by $\pi_i$.
    \end{itemize}
    Finally, using definition \ref{def:FP2BP}, one has: $\phi(\mathcal{P}(\psi(\bpi)) = \bpi$ 
\end{myproof}

\section*{Concluding remarks}

Several perspectives for future work can be exhibited. First, as the rewriting rule is 
more involved for $d \geq 2$, deriving an equation for the generating function
of $d$-floorplans is a highly non trivial open problem, even for the
$3-$dimensional case.
Moreover, in order
to define equivalences between $d$-floorplans, we considered here relative orders of boxes. Equivalence based on borders have been studied in~\cite{asinowski2013orders} showing that these classes of floorplans are in bijection with anti-Baxter permutations (permutations avoiding  $\Sym(\vinpat{2143}{2})$). It would be interesting to generalize the results of~\cite{asinowski2013orders} to $d$-floorplans.

\bibliographystyle{alpha}
\bibliography{sample}

\newcommand{\etalchar}[1]{$^{#1}$}
\begin{thebibliography}{ABBM{\etalchar{+}}13}

\bibitem[AB24]{asinowski20242}
Andrei Asinowski and Cyril Banderier.
\newblock From geometry to generating functions: rectangulations and
  permutations, 2024.

\bibitem[ABB{\etalchar{+}}12]{aval2021baxter}
Jean-Christophe Aval, Adrien Boussicault, Mathilde Bouvel, Olivier Guibert, and
  Matteo Silimbani.
\newblock Baxter tree-like tableaux, (2021),
  \url{https://arxiv.org/abs/2108.06212}.

\bibitem[ABBM{\etalchar{+}}13]{asinowski2013orders}
Andrei Asinowski, Gill Barequet, Mireille Bousquet-M{\'e}lou, Toufik Mansour,
  and Ron~Y Pinter.
\newblock Orders induced by segments in floorplans and $(2-14-3, 3-41-2)
  $-avoiding permutations.
\newblock {\em Electron. J. Combin.}, 20:P35, 2013.

\bibitem[ABP06]{ackerman2006bijection}
Eyal Ackerman, Gill Barequet, and Ron~Y Pinter.
\newblock A bijection between permutations and floorplans, and its
  applications.
\newblock {\em Discrete Applied Mathematics}, 154(12):1674--1684, 2006.

\bibitem[ACFF24]{asinowski2024}
Andrei Asinowski, Jean Cardinal, Stefan Felsner, and Eric Fusy.
\newblock Combinatorics of rectangulations: Old and new bijections, 2024.

\bibitem[AM10]{asinowski2010separable}
Andrei Asinowski and Toufik Mansour.
\newblock Separable $d$-permutations and guillotine partitions.
\newblock {\em Ann. Comb.}, 14:17--43, 2010.

\bibitem[BBMF10]{bonichon2010baxter}
Nicolas Bonichon, Mireille Bousquet-M{\'e}lou, and {\'E}ric Fusy.
\newblock Baxter permutations and plane bipolar orientations.
\newblock {\em S{\'e}m. Lothar. Combin.}, 61:B61Ah, 2010.

\bibitem[BM22]{bonichon2022baxter}
Nicolas Bonichon and Pierre-Jean Morel.
\newblock Baxter d-permutations and other pattern-avoiding classes.
\newblock {\em Journal of Integer Sequences}, 25(2):3, 2022.

\bibitem[FKU20]{felsner2020plattenbauten}
Stefan Felsner, Kolja Knauer, and Torsten Ueckerdt.
\newblock Plattenbauten: touching rectangles in space.
\newblock In Isolde Adler and Haiko M{\"u}ller, editors, {\em International
  Workshop on Graph-Theoretic Concepts in Computer Science}, volume 12301 of
  {\em Lect. Notes in Comp. Sci.}, pages 161--173. Springer, 2020.

\bibitem[HHC{\etalchar{+}}00]{hong2000corner}
Xianlong Hong, Gang Huang, Yici Cai, Jiangchun Gu, Sheqin Dong, Chung~Kuan
  Cheng, and Jun Gu.
\newblock Corner block list: an effective and efficient topological
  representation of non-slicing floorplan.
\newblock In {\em Proceedings of the 2000 IEEE/ACM international conference on
  Computer-aided design}, pages 8--12, 2000.

\bibitem[YCS03]{Young2003}
E.F.Y. Young, C.C.N. Chu, and Z.C. Shen.
\newblock Twin binary sequences: a nonredundant representation for general
  nonslicing floorplan.
\newblock {\em IEEE TCAD}, 22(4):457--469, 2003.

\end{thebibliography}

\end{document}